\numberwithin{equation}{section}
\newtheorem{theorem}{Theorem}[section]
\newtheorem{lemma}[theorem]{Lemma}
\newtheorem{thm}[theorem]{Theorem}
\newtheorem{cor}[theorem]{Corollary}
\newtheorem{rmk}[theorem]{Remark}
\newcommand{\Rmnum}[1]{\expandafter\@slowromancap\romannumeral #1@}
\begin{document}
\title{Quantitative Estimates in Elliptic Homogenization of Non-divergence Form with Unbounded Drift and an Interface}
\author{Yiping Zhang\thanks{Yanqi Lake Beijing Institute of Mathematical Sciences and Applications, Beijing 101408, China and  Yau Mathematical Sciences Center, Tsinghua University, Beijing 100084, China, (zhangyiping161@mails.ucas.ac.cn).}}
\date{}
\maketitle
\begin{abstract}
This paper investigates quantitative estimates in elliptic homogenization of non-divergence form with unbounded drift and an interface, which continues the study of the previous work by Hairer and Manson [Ann. Probab. 39(2011) 648-682], where they investigated the limiting long time/large
scale behaviour of such a process under diffusive rescaling. We determine the effective equation and obtain the size estimates of the gradient of Green functions as well as the optimal (in general) convergence rates. The proof relies on transferring the non-divergence form into the divergence-form with the coefficient matrix decaying exponentially to some (different) periodic matrix on the different sides of the interface first and then investigating this special structure in homogenization of divergence form.

\end{abstract}
\section{Introduction}
In this paper, we consider the non-divergence elliptic equations with unbounded drift, whose junior
coefficient (i.e. drift term) is periodic outside of an ``interface region" of finite thickness, arising from
diffusion process with drift terms under diffusive rescaling. Actually, this paper continues the study of the previous works by Hairer and Manson \cite{MR2789509}, where they investigated the limiting long time/large
scale behaviour of such a process under diffusive rescaling, with the help of the framework provided by
Freidlin and Wentzell \cite{MR1245308} for diffusive process on a graph, in order to identify the generator of the limiting
process. However, we investigate this problem in the sense of PDE instead of the stochastic sense, and we can determine the effective equation and
obtain the optimal $O(\varepsilon)$ convergence rates.

More precisely, for $0<\varepsilon<1$ and $d\geq 3$, we consider the following equation
\begin{equation}\label{1.1}
\tilde{\mathcal{L}}_\varepsilon u_\varepsilon={\tilde{a}}_{ij}^\varepsilon\partial_{ij}u_\varepsilon+\frac{1}{\varepsilon}\tilde{b}_i^\varepsilon\partial_i u_\varepsilon=f\quad \text{ in }\quad \mathbb{R}^d,\\
\end{equation}
where we assume that there exists positive constants $0<\lambda\leq \Lambda$, such that for any $\xi\in \mathbb{R}^d$ and $ \tilde{A}=:(\tilde{a}_{ij})$,
\begin{equation}\begin{aligned}\label{1.2}
\tilde{A}=\tilde{A}^*;\ \lambda|\xi|^2\leq \tilde{A}\xi\cdot\xi\leq \Lambda |\xi|^2;\\
\tilde{A} \text{ is 1-periodic},\  \tilde{A}\in C^\infty(\mathbb{Y}),
\end{aligned}\end{equation}
and the drift term $\tilde{b}$ satisfies
\begin{equation}\label{1.3}
\tilde{b}(y)=\left\{\begin{aligned}&\tilde{b}_+(y)\quad \text{if}\quad y_1>1,\\
&\text{smooth connection},  \text{ if } -1\leq y_1\leq 1,\\
&\tilde{b}_-(y)\quad \text{if}\quad y_1<-1,
\end{aligned}\right.\end{equation}
for $\tilde{b}_+$ and $\tilde{b}_-$ being smooth 1-periodic vector fields.

That $\tilde{a}_{ij}$ is 1-periodic means that $\tilde{a}_{ij}(y+z)=\tilde{a}_{ij}(y)$, for any
$y\in\mathbb{R}^d$ and $z\in \mathbb{Z}^d$. The summation convention is used throughout the paper. Meanwhile, we will denote $\partial_i=:\partial_{x_i}$, $f^\varepsilon(x)=:f(x/\varepsilon)$  and $\mathbb{Y=}:[0,1)^d\cong
\mathbb{R}^d/\mathbb{Z}^d$ if the content is understood. We define $\mathbb{D}=\mathbb{R}\times \mathbb{T}^{d-1}$ with $\mathbb{T}=\mathbb{R}/\mathbb{Z}$, and we say that $u$ is $\mathbb{D}$-periodic if $u$ is 1-periodic in $y'$ for $y=(y_1,y')\in \mathbb{R}^d$. Moreover, we will use the homogenous Sobolev space
$\dot{H}^1(\mathbb{R}^d)=\left\{v:\nabla v\in L^2,||u||_{L^{\frac{2d}{d-2}}}\leq C(d)||\nabla
v||_{L^2}\right\}$ with $d\geq 3$.

Before we move forward, we first introduce some basic results in periodic homogenization of the non-divergence elliptic form with unbounded drift. Precisely, for $0<\varepsilon<1$, define the operator

\begin{equation}\label{1.4}
\tilde{\mathcal{L}}_{\pm,\varepsilon}=:{\tilde{a}}_{ij}^\varepsilon\partial_{ij}\cdot+\frac{1}{\varepsilon}\tilde{b}^\varepsilon_{\pm} \cdot\nabla\cdot.
\end{equation}

It is known in \cite[Chapter 3.3]{bensoussan2011asymptotic} that the invariant measure $m_\pm$ associated with $\tilde{\mathcal{L}}_{\pm,\varepsilon}$ are defined as
\begin{equation}\label{1.5}
\left\{\begin{aligned}
\partial_{y_iy_j}\left(\tilde{a}_{ij}(y) m_\pm(y)\right)-\partial_{y_i}\left(\tilde{b}_{\pm,i}(y)m_\pm(y)\right)=0\text{ in }\mathbb{Y},\\
m_\pm \text{ is 1-periodic, }\int_\mathbb{Y} m_\pm dy=1,\ 0<c_1\leq m_\pm\leq c_2.
\end{aligned}\right.
\end{equation}

It is also known in \cite[Chapter 3.4]{bensoussan2011asymptotic} that under the assumptions \eqref{1.2}-\eqref{1.3}, $f\in L^2(\Omega)$ with $\Omega$ being a bounded $C^{1,1}$ domain in $\mathbb{R}^d$ with $d\geq 2$, and $\int_\mathbb{Y} \tilde{b}_{\pm,i}m_\pm dy=0$, for $i=1,\cdots,d$, then the effective equation for $\tilde{\mathcal{L}}_{\pm,\varepsilon}u_{\pm,\varepsilon} =f$ in $\Omega$, with $u_{\pm,\varepsilon}\in H^1_0(\Omega)$, is given by
\begin{equation}\label{1.6}
\left\{\begin{aligned}
\widehat{\tilde{a}}_{\pm,ij}\partial_{ij}&u_{\pm,0}=f\quad\text{ in }\Omega,&\\
&u_{\pm,0}=0\quad\text{ on }\partial \Omega,&
\end{aligned}\right.\end{equation}
where the effective operator $\widehat{\tilde{a}}_{\pm,ij}$ is defined by
\begin{equation}\label{1.7}
\widehat{\tilde{a}}_{\pm,ij}=\fint_{\mathbb{Y}}\left(\tilde{a}_{ij}+2\tilde{a}_{ik}\partial_{y_k}\tilde{\chi}_{\pm,j}
+\tilde{a}_{k\ell}\partial_{y_k}\tilde{\chi}_{\pm,i}\partial_{y_\ell}\tilde{\chi}_{\pm,j}\right)m_{\pm}(y)dy,\end{equation}
and $\tilde{\chi}_{\pm,j}$, $j=1,\cdots,d$, are the correctors defined by
\begin{equation*}\left\{\begin{aligned}
{\tilde{a}}_{ik}\partial_{y_iy_k}\tilde{\chi}_{\pm,j}+\tilde{b}_{\pm,i}\partial_{y_i} \tilde{\chi}_{\pm,j}=-\tilde{b}_{\pm,j}\text{ in }\mathbb{Y},
\\
\tilde{\chi}_{\pm,j}\text{ is 1-periodic with }\int_{\mathbb{Y}} \tilde{\chi}_{\pm,j}=0.
\end{aligned}\right.
\end{equation*}

See also our previous work \cite{MR}, where by first transferring the non-divergence form \eqref{1.4} into the divergence form, we obtained the $O(\varepsilon)$ convergence
rates and the interior Lipschitz estimates via compactness argument. Moreover, we have provided two examples to show the necessity of the so-called centering conditions and the optimality of the Lipschitz regularity estimates.
See \cite{MR} for more details.

To proceed, for the operator $\tilde{\mathcal{L}}_\varepsilon$ defined in \eqref{1.1} with $d\geq 2$, it is proved in \cite[Proposition 5.5]{MR2789509} that there exists a unique (up to scaling) invariant measure $m$ such that
\begin{equation}\label{1.8}
\left\{\begin{aligned}
\partial_{y_iy_j}\left(\tilde{a}_{ij}(y) m(y)\right)-\partial_{y_i}\left(\tilde{b}_{i}(y)m(y)\right)=0\text{ in }\mathbb{D},\\
m \text{ is $\mathbb{D}$-periodic, } 0<c'_1\leq m\leq c'_2.
\end{aligned}\right.
\end{equation}


We refer to \cite[Chapter 3.3.3]{bensoussan2011asymptotic} for the positivity of the invariant measure $m$. Moreover, with the invariant measure $m$ defined in \eqref{1.8}, we multiply the Equation \eqref{1.1} by $m^\varepsilon(x)$, and set
\begin{equation}\label{1.9}
{a}_{ij}^\varepsilon=\tilde{a}_{ij}^\varepsilon m^\varepsilon,\quad {\beta}_{i}^\varepsilon=\tilde{b}_{i}^\varepsilon m^\varepsilon,\quad \tilde{f}^\varepsilon=f m^\varepsilon.
\end{equation}
We obtain
$${a}_{ij}^\varepsilon\partial_{ij}u_\varepsilon+\frac{1}{\varepsilon}{\beta}_{i}^\varepsilon\partial_i u_\varepsilon=\tilde{f}^\varepsilon\quad \text{ in }\quad \mathbb{R}^d.$$
So that $u_\varepsilon$ is a solution of
\begin{equation}\label{1.10}
\mathcal{L}_\varepsilon u_\varepsilon=:
\partial_i\left({a}_{ij}^\varepsilon\partial_{j}u_\varepsilon\right)
+\frac{1}{\varepsilon}{b}_i^\varepsilon
\partial_i u_\varepsilon =\tilde{f}^\varepsilon\quad\text{ in }\quad\mathbb{R}^d.
\end{equation}
where
\begin{equation}\label{1.11}
{b}_i(y)={\beta}_{i}(y)-\partial_{y_j}{a}_{ij}(y)\quad \text{is }\mathbb{D}\text{-periodic}.
\end{equation}
Then, it follows from \eqref{1.8} and \eqref{1.9} that
\begin{equation}\label{1.12}
\partial_{y_i}{b}_i=0\quad \text{in}\quad\mathbb{D}.
\end{equation}
Moreover, denote the unit cells $C_{\pm,j}$ and $q_{\pm}$ by
\begin{equation*}\begin{aligned}
&C_{+,j}=[j,j+1]\times \mathbb{T}^{d-1},\  C_{-,j}=[-j-1,-j]\times \mathbb{T}^{d-1};\\
&q_{\pm}=\lim_{j\rightarrow\infty}m(C_{\pm,j})=\lim_{j\rightarrow\infty}\int_{C_{\pm,j}}m(y)dy.
\end{aligned}\end{equation*}

Similarly, replacing $m$ and $\tilde{b}$ by $m_\pm$ and $\tilde{b}_\pm$ in \eqref{1.10} and \eqref{1.11}, respectively, gives the definition of the operator $\mathcal{L}_{\pm,\varepsilon}$.

With the notations above at hand, we can determine the effective operator for the operator $\left\{{\mathcal{L}}_\varepsilon\right\}_{1>\varepsilon>0} $, which is stated in the following theorem.
\begin{thm}\label{t1.1}
Under the assumptions \eqref{1.1}-\eqref{1.3} and for any bounded Lipschitz domain $\Omega$ in $\mathbb{R}^d$, $f\in L^2(\Omega)$ and $\int_\mathbb{Y} \tilde{b}_{\pm,i}m_\pm dy=0$, for $i=1,\cdots,d$ with $d\geq 2$, the effective equation for the operator ${\mathcal{L}}_\varepsilon $ defined in \eqref{1.10} is given by
$${\mathcal{L}}_0 u_0=:\operatorname{div}(\widehat{A}(x)\nabla u_0)= f(x)(q_+{I}_{x_1>0}+q_-I_{x_1<0})(x)\text{ in }\Omega,$$
for
\begin{equation}\label{1.13}
\widehat{A}(x)=\left\{\begin{aligned}
q_+\widehat{A_+},\quad\text{ if }\quad x_1>0,\\
q_-\widehat{A_-},\quad\text{ if }\quad x_1<0,
\end{aligned}\right.
\end{equation}
where $\widehat{A_\pm}$ are the homogenized matrices associated with $\mathcal{L}_{\pm,\varepsilon}$, and ${I}_{x_1>0}$ and $I_{x_1<0}$ are the characteristic functions. In general, the matrix $\widehat{A}$ is discontinuous across the interface $\mathcal{I}=\{x:x_1=0\}$.
\end{thm}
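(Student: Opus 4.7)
My plan is to carry out a Tartar-style oscillating test function argument directly from the divergence-form equation \eqref{1.10}, exploiting the exponential relaxation of the $\mathbb{D}$-periodic invariant measure $m$ to its asymptotic half-space limits. The first step is to reduce \eqref{1.10} to pure divergence form: since $\partial_{y_i}b_i=0$ in $\mathbb{D}$ by \eqref{1.12} and the centering hypothesis forces $b_\pm$ to have zero mean on $\mathbb{Y}$, the classical periodic skew-symmetric stream tensors $\phi_\pm$ (with $\partial_{y_j}\phi_{\pm,ji}=b_{\pm,i}$) exist on $\mathbb{Y}$. One then constructs a $\mathbb{D}$-periodic skew-symmetric $\phi_{ij}(y)$ with $\partial_{y_j}\phi_{ji}=b_i$ and $\phi(y)\to q_\pm\phi_\pm(y)$ exponentially as $y_1\to\pm\infty$. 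By antisymmetry $\varepsilon^{-1}b_i^\varepsilon\partial_i u_\varepsilon=\partial_i(\phi_{ij}^\varepsilon\partial_j u_\varepsilon)$, and \eqref{1.10} becomes
\begin{equation*}
\partial_i\bigl(A^\varepsilon_{ij}\partial_j u_\varepsilon\bigr)=\tilde f^\varepsilon,\qquad A^\varepsilon:=a^\varepsilon+\phi^\varepsilon,
\end{equation*}
uniformly elliptic in its symmetric part.

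The next step is the key decay estimate $|m(y)-q_\pm m_\pm(y)|+|\phi(y)-q_\pm\phi_\pm(y)|\le Ce^{-c|y_1|}$, which follows from the uniqueness (up to scaling) of the $\mathbb{D}$-periodic invariant measure $m$ together with the exponential spectral gap of the $\pm$-periodic operators. On $\pm x_1>\varepsilon\log\varepsilon^{-1}$ this gives $A^\varepsilon(x)=q_\pm A^\varepsilon_\pm(x)+O(\varepsilon^N)$ for every $N$, where $A_\pm:=\tilde a\, m_\pm+\phi_\pm$ is the periodic divergence-form coefficient matrix of $\mathcal{L}_{\pm,\varepsilon}$, whose classical homogenized matrix is $\widehat{A_\pm}$. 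The same decay combined with $\int_\mathbb{Y} m_\pm=1$ yields the weak $L^2(\Omega)$ convergence $\tilde f^\varepsilon=fm^\varepsilon\rightharpoonup f(q_+I_{x_1>0}+q_-I_{x_1<0})$, which is precisely the right-hand side of the claimed effective equation.

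With these preparations, the passage to the limit is carried out by oscillating test functions. Uniform energy estimates yield subsequences $u_\varepsilon\rightharpoonup u_0$ in $H^1_{loc}(\Omega)$ and $\sigma^\varepsilon:=A^\varepsilon\nabla u_\varepsilon\rightharpoonup\sigma_0$ in $L^2_{loc}$. For each $\xi\in\mathbb{R}^d$ I would construct a $\mathbb{D}$-periodic global adjoint corrector $\chi_\xi^*$ asymptotic to the classical $\pm$-periodic adjoint correctors $q_\pm\chi_{\pm,\xi}^*$ on the two sides, and test against $\psi_\xi^\varepsilon(x):=\xi\cdot x+\varepsilon\chi_\xi^*(x/\varepsilon)$. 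A Murat--Tartar div--curl identification then forces $\sigma_0\cdot\xi=\widehat A(x)\nabla u_0\cdot\xi$ with $\widehat A$ piecewise constant as in \eqref{1.13}, completing the proof.

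The main obstacle is the construction of this global adjoint corrector $\chi_\xi^*$ on the cylinder $\mathbb{D}$: since $A$ is only $\mathbb{D}$-periodic and not $\mathbb{Y}$-periodic, the usual Lax--Milgram solvability on the unit cell is replaced by a Fredholm-type argument on $\mathbb{D}$ with prescribed exponential asymptotics, subject to a compatibility condition dictated by the divergence-free structure of $b$. Proving exponential convergence of $\chi_\xi^*-q_\pm\chi_{\pm,\xi}^*$ to zero (and the analogous decay for the stream tensor $\phi$ in Step 1) is precisely what makes the interface strip contribute a negligible $o(1)$ term in the test-function identity; this is why no anomalous jump or surface term appears in the effective equation beyond the piecewise structure \eqref{1.13}, which itself implicitly encodes the natural flux continuity $q_+\widehat{A_+}\nabla u_0\cdot e_1=q_-\widehat{A_-}\nabla u_0\cdot e_1$ across $\{x_1=0\}$.
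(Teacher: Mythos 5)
Your overall architecture matches the paper's: transfer \eqref{1.10} to pure divergence form via a $\mathbb{D}$-periodic skew-symmetric stream tensor decaying exponentially to $q_\pm\phi_\pm$ (the paper's Lemma \ref{t3.3}), use the exponential relaxation $|m-q_\pm m_\pm|\le Ce^{-c|y_1|}$ to identify the limit of the right-hand side (Lemma \ref{t4.4}; your weak-$L^2$ statement suffices since it upgrades to strong $H^{-1}$ convergence on bounded domains), and then run a div--curl / H-convergence argument (Lemma \ref{t4.5}). However, there is a genuine gap at the decisive step, the construction of the oscillating test functions. You propose to test against $\psi_\xi^\varepsilon(x)=\xi\cdot x+\varepsilon\chi_\xi^*(x/\varepsilon)$ with a sublinear $\mathbb{D}$-periodic corrector $\chi_\xi^*$ whose gradient is asymptotic to the periodic corrector gradients on both sides. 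Such a corrector does not exist in general: conservation of flux across the cylinder forces $\int_{\{y_1=R\}\times\mathbb{T}^{d-1}}\bigl(A(\xi+\nabla\chi_\xi^*)\bigr)\cdot e_1$ to be independent of $R$, and its two limits as $R\to\pm\infty$ would be $q_+\widehat{A_+}\xi\cdot e_1$ and $q_-\widehat{A_-}\xi\cdot e_1$, which differ generically (you yourself note the transmission condition $q_+\widehat{A_+}\nabla u_0\cdot e_1=q_-\widehat{A_-}\nabla u_0\cdot e_1$ at the end, but it contradicts the existence of a sublinear corrector for the \emph{unbent} linear profile $\xi\cdot x$). The paper resolves this by replacing $\xi\cdot x$ with the piecewise-linear $\widehat{A}$-harmonic profiles $P_j$ of \eqref{4.4}, whose slope jump $\vartheta_j$ in \eqref{4.5} is chosen exactly so that the transmission conditions \eqref{4.8} hold; the correctors $\chi_j$ of Lemma \ref{t4.1} are then adapted to $P_j$, with $\nabla\chi_j\to\nabla\chi_{<,j}$ as $y_1\to-\infty$ but $\nabla\chi_j\to\nabla\chi_{>,j}+\vartheta_j\nabla\chi_{>,1}$ as $y_1\to+\infty$. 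This bending of the affine profile is where the interface actually enters the effective matrix; without it the div--curl identification cannot be closed.

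Two smaller inaccuracies: the asymptotic profiles of the correctors carry no factor $q_\pm$ (the corrector equation is invariant under scaling $A_\pm\mapsto q_\pm A_\pm$, so the relevant limits are $\chi_{\pm,\xi}^*$, not $q_\pm\chi_{\pm,\xi}^*$; the weights $q_\pm$ enter only through $m$, $\phi_b$ and the homogenized matrix \eqref{1.13}); and the solvability of $\Delta N_{b,i}=b_i$ on $\mathbb{D}$ with exponentially decaying gradient requires the mean-zero adjustment of the cut-offs as in \eqref{3.27}--\eqref{3.28}, which you should not take for granted when asserting the existence of $\phi$ in your first step.
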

\begin{rmk}\label{t1.2}
Actually, the generator of the limiting process of the Equation \eqref{1.1} has also been obtained by the probabilistic method in  \cite{MR2789509}. However, it is not much clear to obtain the effective equation associated with this generator of the limiting process. While in Theorem \ref{t1.1}, we can give an explicit expression for the effective equation.
\end{rmk}
Moreover, we can also obtain the following optimal $O(\varepsilon)$ convergence rates (see \cite{MR4308690} for the optimal $O(\varepsilon)$ convergence rates in general without drift terms in elliptic periodic homogenization of non-divergence form):
\begin{thm}\label{t1.3}
Under the assumptions \eqref{1.1}-\eqref{1.3}, $f\in W^{1,p}(\mathbb{R}^d)\cap L^{2d/(d+4)}(\mathbb{R}^d)$ for  some $p\in (d,\infty)$, and $\int_\mathbb{Y} \tilde{b}_{\pm,i}m_\pm dy=0$, for $i=1,\cdots,d$ with $d\geq 3$, let $u_\varepsilon,u_0\in \dot{H}^1(\mathbb{R}^d)$ solve the equation $\mathcal{L}_\varepsilon u_\varepsilon=f m^\varepsilon$ in $\mathbb{R}^d$ and $\mathcal{L}_0 u_0=f(x)(q_+{I}_{x_1>0}+q_-I_{x_1<0})(x)$  in $\mathbb{R}^d$, respectively, then we have
$$||u_\varepsilon-u_0||_{L^\frac{2d}{d-2}(\mathbb{R}^d)}+||u_\varepsilon-u_0||_{L^\infty(\mathbb{R}^d)}\leq C \varepsilon ||f||_{W^{1,p}(\mathbb{R}^d)\cap L^{2d/(d+4)}(\mathbb{R}^d)},$$
 where the constant $C$ depending only on $d$, $p$, $\tilde{A}$ and $\tilde{b}$.
\end{thm}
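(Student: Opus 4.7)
The plan is to combine an interface-adapted two-scale expansion with a duality argument using the Green-function gradient estimates that the paper establishes separately. The starting point is the divergence-form reformulation \eqref{1.10}--\eqref{1.12}: the coefficient matrix $a$ and the divergence-free drift $b$ are $\mathbb{D}$-periodic and, thanks to the exponential stabilization of the invariant measure $m$ to $q_\pm m_\pm$ as $y_1\to\pm\infty$ (the ``special structure'' emphasized in the abstract), they converge exponentially to the periodic structures defining $\mathcal{L}_{\pm,\varepsilon}$.

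First, I would construct interface correctors $\chi_j$ on the strip $\mathbb{D}$ solving the divergence-form corrector problem for $\mathcal{L}_\varepsilon$ and satisfying $\chi_j-\chi_{\pm,j}\to 0$ exponentially as $y_1\to\pm\infty$, together with associated flux correctors $\phi_{kij}$ antisymmetric in $(k,i)$. Define the first-order approximation
\[
w_\varepsilon := u_\varepsilon - u_0 - \varepsilon\, \chi_j^\varepsilon\, S_\varepsilon\bigl(\partial_j u_0\bigr),
\]
where $S_\varepsilon$ is a Steklov-type smoothing operator at scale $\varepsilon$, needed because the piecewise-constant $\widehat{A}$ forces $\nabla u_0$ to have a jump across $\mathcal{I}=\{x_1=0\}$. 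A direct computation using the corrector equations, the antisymmetry of $\phi_{kij}$, and $\partial_i b_i=0$ from \eqref{1.12} should yield a representation $\mathcal{L}_\varepsilon w_\varepsilon=\operatorname{div}(F_\varepsilon)+G_\varepsilon$ with $\|F_\varepsilon\|_{L^2}+\|G_\varepsilon\|_{L^{2d/(d+2)}}\leq C\varepsilon\|f\|_{W^{1,p}\cap L^{2d/(d+4)}}$. Since $b$ is divergence-free, the global energy estimate for $\mathcal{L}_\varepsilon$ on $\dot{H}^1(\mathbb{R}^d)$ absorbs the drift and gives $\|\nabla w_\varepsilon\|_{L^2}\leq C\varepsilon\|f\|_*$; Sobolev embedding then yields the $L^{2d/(d-2)}$ bound on $w_\varepsilon$, and the corrector term is itself $O(\varepsilon)$ in $L^{2d/(d-2)}$ by boundedness of $\chi_j$ (inherited from the bounded periodic correctors via exponential convergence), which closes the $L^{2d/(d-2)}$ estimate.

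To obtain the $L^\infty$ bound, I would use the Green function $G_\varepsilon(x_0,\cdot)$ of $\mathcal{L}_\varepsilon$ in a duality formula for $(u_\varepsilon-u_0)(x_0)$, integrate by parts to trade the regularity $f\in W^{1,p}$ with $p>d$ for a factor of $\varepsilon$ coming from the oscillating factor $m^\varepsilon-(q_+I_{x_1>0}+q_-I_{x_1<0})$, and invoke the size and gradient estimates on $G_\varepsilon$ proved earlier in the paper. The interface defect in $m^\varepsilon-q_\pm m_\pm$ is controlled by the exponential decay, and the $L^{2d/(d+4)}$ assumption on $f$ guarantees the necessary integrability at infinity for the $\dot{H}^1$ framework.

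The main obstacle is keeping every contribution at $O(\varepsilon)$ near the interface. The jump of $\widehat{A}$ makes $\nabla u_0$ merely piecewise smooth, and the correctors are not periodic near $\{x_1=0\}$, so the standard two-scale machinery generically produces an $O(\sqrt{\varepsilon})$ boundary-layer loss there. Avoiding this loss requires systematically exploiting the exponential decay of $\chi_j-\chi_{\pm,j}$, $\phi_{kij}-\phi_{\pm,kij}$, and $m-(q_+m_+ I_{y_1>0}+q_-m_- I_{y_1<0})$, together with a smoothing $S_\varepsilon$ carefully adapted to the interface jump so that none of its commutator estimates cost a power of $\varepsilon$.
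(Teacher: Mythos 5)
Your overall strategy---reformulate in divergence form, absorb the drift into an antisymmetric flux corrector, run a smoothed two-scale expansion, and finish with Green-function gradient bounds---matches the architecture of Sections 3 and 4. But there is a genuine gap at the interface, and it is precisely the point you flag as ``the main obstacle'' without resolving it. Your ansatz $w_\varepsilon = u_\varepsilon - u_0 - \varepsilon\chi_j^\varepsilon S_\varepsilon(\partial_j u_0)$ attaches the correctors to the discontinuous field $\partial_j u_0$: since $\widehat{A}$ jumps across $\mathcal{I}$, $\nabla u_0$ has an $O(1)$ jump there, so $\partial_j u_0 \notin H^1(\mathbb{R}^d)$ and $\|\nabla S_\varepsilon(\partial_j u_0)\|_{L^2}$ scales like $\varepsilon^{-1/2}$ near the interface. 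No choice of smoothing operator repairs this; the error terms $\varepsilon\operatorname{div}(A^\varepsilon\chi_j^\varepsilon \nabla S_\varepsilon(\partial_j u_0))$ and the commutator $\|\partial_j u_0 - S_\varepsilon(\partial_j u_0)\|_{L^2}$ are then only $O(\varepsilon^{1/2})$, not $O(\varepsilon)$. Relatedly, correctors with $\chi_j-\chi_{\pm,j}\to 0$ exponentially on both sides do not exist in general: the transmission condition forces the asymptotics $\chi_j \sim \chi_{>,j}+\vartheta_j\chi_{>,1}$ as $y_1\to+\infty$, with $\vartheta_j$ given by \eqref{4.5}.

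The missing idea is structural, not a refinement of $S_\varepsilon$: one must correct the piecewise-linear $\widehat{A}$-harmonic profiles $P_j$ of \eqref{4.4} rather than the coordinates $x_j$, i.e., build correctors solving \eqref{4.9} (Lemma \ref{t4.1}), and expand against the field $U_0=(\nabla P)^{-1}\cdot\nabla u_0$ of \eqref{4.26}. The transmission conditions \eqref{4.8} make $U_0$ continuous across $\mathcal{I}$, and Lemma \ref{t4.7} (via the change of variables $\tilde u_0 = u_0\circ P^{-1}$ and the $W^{2,p}$ theory for piecewise-constant coefficients) gives $\|U_0\|_{H^1\cap W^{1,p}}\leq C\|f\|$, which is exactly the substitute for the unavailable $\|u_0\|_{H^2}$ bound. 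With $w_\varepsilon=u_\varepsilon-u_0-\varepsilon\chi_j^\varepsilon S_\varepsilon(U_{0,j})$ every error term in \eqref{4.41} is genuinely $O(\varepsilon)$, and the rest of your outline (energy estimate for the $\dot H^1$ piece, Green-function/Lipschitz estimates and $W^{1,q}$ bounds with $q>d$ for the $L^\infty$ piece, plus $\chi\in L^\infty$ for the corrector term) goes through as in Theorems \ref{t4.9} and \ref{t4.12}. Without the $P_j$/$U_0$ device, the proof as proposed stalls at $O(\varepsilon^{1/2})$.
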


Note that in Theorem \ref{t1.3}, we assume $d\geq 3$, since we need the Sobolev embedding inequality to guarantee the existence of $u_\varepsilon$ and $u_0$ in $\dot{H}^1(\mathbb{R}^d)$ and size estimates of the Green functions for the operator $\mathcal{L}_\varepsilon$ and $\mathcal{L}_0$ in $\mathbb{R}^d$, respectively. Moreover, the case $d=2$ will be mentioned in some cases.

Recently, the theory of divergence form in homogenization, such as the $L^2$ and $H^1_0$ convergence rates, Lipschitz estimates, $W^{1,p}$ estimates and the asymptotic behaviours of the Green functions and the fundamental solutions, has been widely understood, especially for the periodic and the ergodic stochastic case, we refer to some excellent expositions \cite{MR3932093,bensoussan2011asymptotic,MR1329546,shen2018periodic} for more details. However, much less is known for the non-divergence form in homogenization, especially for the case with an unbounded drift in homogenization.

Without the unbounded drift, i.e., $\tilde{b}=0$, the first significant qualitative analysis may date back to Avellaneda-Lin \cite{MR978702}, where by using compactness methods, the authors obtained the uniform
$C^{0,\alpha}$, $C^{1,\alpha}$ and $C^{1,1}$ a priori estimates for solutions of boundary value problems of
non-divergence form in elliptic periodic homogenization. Later on, Armsrtong-Lin \cite{MR3665674} have obtained
the quantitative results for the stochastic homogenization for linear uniformly elliptic equations of
non-divergence form. Under strong independence assumptions on the coefficients, they obtained optimal
estimates on the sub-quadratic growth of the correctors with stretched exponential-type bounds in
probability. See also the previous work by Armsrtong-Smart \cite{MR3269637} of the non-divergence form in ergodic stochastic case. Recently, Sprekeler-Tran
\cite{MR4308690} have obtained the optimal $O(\varepsilon)$ convergence rates in general. More precisely, they obtained
$$\text{for any }p\in(1,\infty),\ ||u_\varepsilon-u_0||_{W^{1,p}}=O(\varepsilon),\text{ if }f\in W^{3,q} \text{ for some }q>d.$$

Moreover, see \cite{guo2020conjecture} for the conjecture of an $O(\varepsilon^2)$ convergence rates in
non-divergence form by Guo-Tran in 2-D with some additional structure on the coefficients, and see \cite{guo2022characterizations} for a positive answer of this $O(\varepsilon^2)$ convergence rates by Guo-Sprekeler-Tran. By the way, we
also refer to \cite[Chapter 3.4-3.5]{bensoussan2011asymptotic}, where by using the probabilistic method and
the two-scale expansions, the authors obtained the effective equation and the $O(\varepsilon)$
$L^\infty$-convergence rates under the assumption $f\in W^{3,q}$ for some $q>d$, with the presence of the
unbounded drift.\\

At the end of this section, we briefly explain the idea to obtain the result in Theorem 1.3.

\noindent Step 1 (Section 2). We consider the divergence form $\mathcal{L}_{\varepsilon,>} u_\varepsilon=\partial_i\left(a_{>,ij}(x/\varepsilon)\partial_j u_\varepsilon\right)=f$, where the coefficient matrix $A_>$ is 1-periodic in $y'$ and decays exponentially fast to some periodic matrix $A_+$. For this operator $A_>$, we introduce the definition of correctors, determine the effective operator, and under suitable regularity assumption on $A_>$ and $f$, we can obtain the convergence rates and the uniform interior Lipschitz estimates.\\

\noindent Step 2 (Section 3). We transfer the  divergence form \eqref{1.10} into the divergence form
$\mathcal{L}_\varepsilon u_\varepsilon=\partial_i\left({a}_{1,ij}^\varepsilon\partial_{j}u_\varepsilon\right)
=fm^\varepsilon$ with an interface of finite thickness on the coefficient matrix $A_1$, where $A_1$ decays exponentially fast to $A_+$, if
$y_1\rightarrow +\infty$ and $A_1$  decays exponentially fast to $A_-$, if $y_1\rightarrow -\infty$, for some periodic matrices $A_\pm$. Actually, the
starting point is that, in the periodic case (i.e., there is no ``interface region"), there exists the so-called  flux corrector $\phi$, such that
$\varepsilon^{-1}b^\varepsilon \cdot \nabla u_\varepsilon=\partial_{i}\left(\phi^\varepsilon_{ij}\partial_j
u_\varepsilon\right)$ with $\phi_{ij}=-\phi_{ji}$, if the drift term $b$ is the mean-valued periodic divergence-free vector field, which implies that

\begin{equation}\label{1.14}
\mathcal{L}_\varepsilon u_\varepsilon=\operatorname{div}\left[\left(A^\varepsilon+\phi^\varepsilon\right)\nabla u_\varepsilon\right]=fm^\varepsilon.
\end{equation}

Note that $(A+\phi)\xi\cdot\xi=A\xi\cdot\xi$ for any $\xi\in \mathbb{R}^d$, which would reduce to the case considered in \cite{shen2018periodic} if $m\equiv 1$.\\

\noindent Step 3 (Section 4). For the divergence form obtained in Step 2, we introduce the definition of
correctors, determine the effective operator, and under suitable regularity assumptions, we obtain the size
estimates of the Green function as well as its gradient for this operator, which would imply the desired
convergence rates. Actually, the idea of this proof in step 3 follows from \cite{MR3421758,MR3974127}, since the effective operator
is almost same as the case considered in \cite{MR3974127}.
\section{Basic results for the divergence form}

In this subsection, we consider the second order elliptic equations of divergence form in homogenization. More precisely, for $d\geq 2$ and $0<\varepsilon<1$, consider
\begin{equation}\label{2.1}
\left\{\begin{aligned}
\mathcal{L}_{\varepsilon,>} u_\varepsilon=\partial_i\left(a_{>,ij}(x/\varepsilon)\partial_j u_\varepsilon\right)=f & \quad \text { in } \Omega,\\
u_{\varepsilon}=0&  \quad \text { on } \partial \Omega,
\end{aligned}\right.\
\end{equation}
where the leading coefficient matrix $A_>=(a_{>,ij})$ satisfies the ellipticity condition

\begin{equation}\label{2.2}
\lambda |\xi|^2\leq a_{>,ij}(y)\xi_i\xi_j \leq \Lambda|\xi|^2 \text{\quad for }y\in \mathbb{R}^d \text{ and }\xi\in \mathbb{R}^d,
\end{equation}
where $0<\lambda\leq \Lambda$, and the 1-periodicity condition in $y'=(y_2,\cdots,y_d)$

\begin{equation}\label{2.3}
A_>(y+z)=A_>(y) \text{\quad for }y\in \mathbb{R}^d\text{ and }z=(0,z')\text{ with }z'\in \mathbb{Z}^{d-1},
\end{equation}
and in the direction of $y_1$, there exists constant $\kappa>0$, and  1-periodic matrix-valued function $A_+(y)$ satisfying \eqref{2.2}, such that

\begin{equation}\label{2.4}
|A_>(y)-A_+(y)|\leq \exp\{-\kappa|y_1|\},\ \text{ for }y\in\mathbb{R}^d,
\end{equation}
as well as  the $VMO(\mathbb{R}^d)$ smoothness condition. In order to quantify the smoothness, we will impose the following condition: $A_>\in VMO(\mathbb{R}^d)$ means that

\begin{equation}\label{2.5}
\sup_{x\in \mathbb{R}^d}\fint_{B(x,t)}|A_>-\fint_{B(x,t)}A_>|\leq \rho(t), \text{\quad for }0<t\leq 1,
\end{equation}
where $\rho$ is a nondecreasing continuous function on $[0,1]$ with $\rho(0)=0$. Actually, the conditions \eqref{2.3} and \eqref{2.4} state that $A$ is $\mathbb{D}$-periodic and decays exponentially fast to $A_+$ in $y_1$. Note that a more general case has been investigated in \cite{MR3421758}. Moreover, the correctors $\chi_k$, $k=1,\cdots,d$, are defined as
\begin{equation}\label{2.6}
\left\{\begin{aligned}
\partial_{y_i}\left(a_{>,i j}\left(y\right) {\partial_{y_j}}\chi_{>,k}\right)=- \partial_{y_i}a_{>,ik} \text { in } \mathbb{D}, \\
\chi_{>,k}\  is\  \text{1-periodic in }y'.
\end{aligned}\right.
\end{equation}
The following lemma states the uniqueness and existence of corrector $\chi_{>,k}$.
\begin{lemma}\label{t2.1}
Assume that the matrix $A_>$ satisfies \eqref{2.2}-\eqref{2.4}, then there exists a unique  $\mathbb{D}$-periodic solution $\chi_{>,k}$ to the Equation \eqref{2.6}, such that

\begin{equation}\label{2.7}
\left\{\begin{aligned}
\int_{\mathbb{D}}|\nabla_y(\chi_{>,k}-\chi_{+,k})|^2\leq C(d,\lambda,\Lambda,\kappa);\\
|\chi_{>,k}-\chi_{+,k}|\rightarrow 0 \text{ as }|y_1|\rightarrow +\infty.
\end{aligned}\right.\end{equation}
where $\chi_{+,k}$, defined in \eqref{2.8}, is the corrector for the 1-periodic matrix $A_+$, and the constant $C$ depends only on $d$, $\lambda$, $\Lambda$ and $\kappa$.

\end{lemma}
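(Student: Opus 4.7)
The strategy is to study the perturbation $w_k:=\chi_{>,k}-\chi_{+,k}$ rather than $\chi_{>,k}$ directly. Subtracting the periodic corrector equation associated to $A_+$ from \eqref{2.6} and reorganizing in divergence form gives
\begin{equation*}
-\partial_{y_i}\bigl(a_{>,ij}\partial_{y_j}w_k\bigr)=\partial_{y_i}F_{k,i}\ \text{ in }\mathbb{D},\qquad F_{k,i}:=(a_{>,ik}-a_{+,ik})+(a_{>,ij}-a_{+,ij})\partial_{y_j}\chi_{+,k}.
\end{equation*}
The key observation is that $|A_>-A_+|\le e^{-\kappa|y_1|}$ by \eqref{2.4}, while $\nabla\chi_{+,k}$, periodically extended, has $\|\nabla\chi_{+,k}\|_{L^2(\mathbb{Y})}\le C(d,\lambda,\Lambda)$ by the standard energy estimate on the periodic corrector; summing slicewise over unit cells one obtains $\|F_k\|_{L^2(\mathbb{D})}\le C(d,\lambda,\Lambda,\kappa)$, and in fact $\|F_k\|_{L^2(\{|y_1|>N\})}^2\le Ce^{-2\kappa N}$.

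For existence, I would exhaust $\mathbb{D}$ by finite cylinders $\mathbb{D}_R=(-R,R)\times\mathbb{T}^{d-1}$ and solve the Dirichlet problem $-\partial_i(a_{>,ij}\partial_j w_k^R)=\partial_i F_{k,i}$ in $\mathbb{D}_R$ with $w_k^R=0$ on $\{|y_1|=R\}$ via Lax--Milgram; this yields a unique $w_k^R$ satisfying $\|\nabla w_k^R\|_{L^2(\mathbb{D}_R)}\le\lambda^{-1}\|F_k\|_{L^2(\mathbb{D})}$ uniformly in $R$. Extracting a weak $H^1_{\mathrm{loc}}$ limit as $R\to\infty$ produces a $\mathbb{D}$-periodic $w_k$ solving the perturbed PDE and verifying the first estimate in \eqref{2.7}.

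The main obstacle is promoting the gradient bound to pointwise decay $|w_k|\to 0$ as $|y_1|\to\infty$. A Caccioppoli-type estimate against cutoffs $\eta_N$ concentrated in $\{y_1>N\}$, iterated in the spirit of Widman's hole-filling trick and paired with the exponential tail bound on $F_k$, yields $\int_{\{y_1>N\}}|\nabla w_k|^2\le Ce^{-cN}$ for some $c>0$. To pass from $L^2$ gradient decay to pointwise smallness I would decompose $w_k=\bar w_k(y_1)+\tilde w_k(y_1,y')$, where $\bar w_k$ is the $y'$-mean: the zero-mean part $\tilde w_k$ is controlled in $L^\infty$ by Poincar\'e in $y'$ combined with De Giorgi--Nash--Moser $L^\infty$--$L^2$ bounds on annular cylinders $\{N<y_1<N+2\}\times\mathbb{T}^{d-1}$, while averaging the PDE over $\mathbb{T}^{d-1}$ reduces the mean to a one-dimensional flux identity whose right-hand side is integrable, forcing $\bar w_k$ to have limits at $\pm\infty$; fixing the free additive constant in $\chi_{>,k}$ so that the $+\infty$ limit is zero, the tail energy decay combined with the symmetric argument on $\{y_1<-N\}$ forces the $-\infty$ limit to vanish as well (cf.\ the closely related analysis in \cite{MR3421758}).

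Uniqueness then follows from the same energy machinery: the difference $v$ of any two $\mathbb{D}$-periodic solutions satisfying \eqref{2.7} is a $\mathbb{D}$-periodic solution of the homogeneous equation with $\nabla v\in L^2(\mathbb{D})$ and $v\to 0$ at $\pm\infty$. Testing against $v\eta_R^2$ with a $y_1$-cutoff $\eta_R$ (equal to $1$ on $\{|y_1|<R\}$, supported in $\{|y_1|<R+1\}$) and sending $R\to\infty$, the cross term is bounded by $C\|v\|_{L^\infty(\{R<|y_1|<R+1\})}\|\nabla v\|_{L^2(\{R<|y_1|<R+1\})}$, which tends to zero by the decay of $v$ and the $L^2$-integrability of $\nabla v$; coercivity gives $\nabla v\equiv 0$, and combined with $v\to 0$ one concludes $v\equiv 0$.
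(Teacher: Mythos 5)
Your setup is the same as the paper's: you pass to $w_k=\chi_{>,k}-\chi_{+,k}$, write the equation in divergence form with right-hand side $\operatorname{div}F_k$, and use \eqref{2.4} together with the periodic energy bound on $\nabla\chi_{+,k}$ (summed slicewise over unit cells, exactly the paper's \eqref{2.10}--\eqref{2.11}) to get $F_k\in L^2(\mathbb{D})$ with exponentially small tails. Your existence step (exhaustion by finite cylinders plus weak limits) is a harmless variant of the paper's direct Lax--Milgram on the space of $\mathbb{D}$-periodic functions with $\nabla u\in L^2(\mathbb{D})$, and your uniqueness argument is fine. You are in fact more careful than the paper about the qualitative decay claim in \eqref{2.7}, which the paper disposes of in one line.

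Precisely there, however, your argument has a genuine gap. Your own machinery (the flux identity for the $y'$-average $\bar w_k$ plus the exponential tail-energy decay) only shows that $\bar w_k$ has finite limits $c_\pm$ as $y_1\to\pm\infty$, and the free additive constant lets you normalize \emph{one} of them to zero. The assertion that ``the tail energy decay combined with the symmetric argument on $\{y_1<-N\}$ forces the $-\infty$ limit to vanish as well'' does not follow: the symmetric argument gives existence of $c_-$, not its value, and $c_+-c_-=\int_{\mathbb{R}}\bar w_k'(y_1)\,dy_1$ need not vanish. A concrete obstruction: take $d=2$, $A_>(y)=a(y_1)I$ with $a=1+\epsilon\varphi(y_1)$, $\varphi\ge 0$ compactly supported, and $A_+=I$ (so $\chi_{+,1}=0$). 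The unique solution of \eqref{2.6} with $k=1$ and $\nabla\chi_{>,1}\in L^2(\mathbb{D})$ is $\chi_{>,1}(y_1)$ with $\chi_{>,1}'=(1-a)/a$, whence $\chi_{>,1}(+\infty)-\chi_{>,1}(-\infty)=-\int\epsilon\varphi/(1+\epsilon\varphi)\neq 0$, so $w_k$ cannot tend to zero at both ends. What your (and the paper's) argument actually delivers, and what is used downstream (e.g.\ in Lemma \ref{t2.8}, which integrates $\partial_{y_1}w_k$ from $+\infty$), is convergence to constants at $\pm\infty$ with the constant at one end normalized to zero; the two-sided statement in \eqref{2.7} should be read in that weaker sense, and your final sentence of the decay paragraph should be replaced accordingly rather than claimed as a consequence of symmetry.
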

\begin{proof}For the elliptic periodic matrix $A_+$, the corrector $\chi_{+,k}$, $k=1,\cdots,d$, are defined as
\begin{equation}\label{2.8}
\left\{
\begin{aligned}
\partial_{y_i}\left(a_{+,i j}\left(y\right) {\partial_{y_j}}\chi_{+,k}\right)=- \partial_{y_i}a_{+,ik} \text { in } \mathbb{Y}, \\
\chi_{k}\  is\  \text{1-periodic with} \fint_\mathbb{Y}\chi_{+,k}=0.
\end{aligned}\right.
\end{equation}
Then the difference $\chi_k-\chi_{+,k}$ satisfies the following equation
\begin{equation}\label{2.9}
\begin{aligned}
\partial_{y_i}\left(a_{>,i j}\left(y\right) {\partial_{y_j}}(\chi_{>,k}-\chi_{+,k})\right)&=- \partial_{y_i}(a_{>,ik}-a_{+,ik})-\partial_{y_i}[(a_{>,ij}-a_{+,ij})\partial_{y_i}\chi_{+,k}]\\
&=:\operatorname{div}F(A_>,A_+,\chi_{+,k})\quad \text{ in }\mathbb{D}.
\end{aligned}\end{equation}
For any $1\leq p<\infty$, it is easy to see that
\begin{equation}\label{2.10}
\int_{\mathbb{D}}|a_{>,ik}-a_{+,ik}|^p(y)dy\leq \int_{\mathbb{D}}\exp\{-\kappa p|y_1|\}dy\leq C_p.
\end{equation}
Moreover, using the periodicity of $\chi_+$, we have
\begin{equation}\begin{aligned}\label{2.11}
\int_{\mathbb{D}}|(a_{>,ij}-a_{+,ij})\partial_{y_i}\chi_{+,k}|^p&=\sum_{m=-\infty}^{+\infty}
\int_{[m,m+1]\times \mathbb{T}^{d-1}}|(a_{>,ij}-a_{+,ij})\partial_{y_i}\chi_{+,k}|^pdy\\
&\leq C\sum_{m=-\infty}^{+\infty}\exp\{-\kappa p|m|\}\int_{[m,m+1]\times \mathbb{T}^{d-1}}|\partial_{y_i}\chi_{+,k}|^pdy\\
&\leq C\sum_{m=-\infty}^{+\infty}\exp\{-\kappa p|m|\}||\nabla_y \chi_+||^p_{L^p(\mathbb{Y})}\\
&\leq C_p ||\nabla_y \chi_+||^p_{L^p(\mathbb{Y})}.
\end{aligned}\end{equation}

Then, according to the classical Lax-Milgram Theorem after choosing $p=2$ in \eqref{2.10} and \eqref{2.11}, there exists a solution $\chi_{>,k}-\chi_{+,k}$, solving \eqref{2.9}, or equivalently, there exists a unique solution $\chi_{>,k}$ solving \eqref{2.6}, such that
\begin{equation}\label{2.12}
\left\{\begin{aligned}
\int_{\mathbb{D}}|\nabla_y(\chi_{>,k}-\chi_{+,k})|^2\leq C(d,\lambda,\kappa);\\
|\chi_{>,k}(y)-\chi_{+,k}(y)|\rightarrow 0 \text{ as }|y_1|\rightarrow +\infty.
\end{aligned}\right.\end{equation}

Moreover, if $A_{>}\in C^{0,\alpha}(\mathbb{R}^d)$ and $A_+\in C^{0,\alpha}(\mathbb{R}^d)$ for some fixed $\alpha>0$, then according to the classical Schauder estimates, we have $\nabla_y\chi_>\in C^{0,\alpha}(\mathbb{D})$, due to \eqref{2.9}, \eqref{2.12} and $\nabla_y\chi_+\in C^{0,\alpha}(\mathbb{Y})$.
\end{proof}
In order to introduce the effective operator, if the limit in \eqref{2.13} exists, we first introduce the following notation:
\begin{equation}\label{2.13}
\langle b\rangle=:\lim_{N\rightarrow\infty}\fint_{\mathbb{D}_N}b(y)dy, \text{ for }b \text{ being }\mathbb{D}\text{-periodic},\end{equation}
where $\mathbb{D}_N=:(-N,N)\times \mathbb{T}^{d-1}.$
It is easy to see that
\begin{equation}\label{2.14}
\langle b\rangle:=\fint_{\mathbb{Y}}b(y)dy,\  \text{ if }b \text{ is 1-periodic} \text{ in }y.\end{equation}
Then we can introduce the effective operator $\widehat{A_>}$ for $A_>$:
\begin{equation}\label{2.15}
\widehat{A_>}=:\langle A_>\rangle+\langle A_>\nabla \chi_>\rangle=\langle (a_{>,ij})\rangle+\langle (a_{>,ik}\partial_{y_k}\chi_{>,j})\rangle.
\end{equation}

Actually, the effective operator $\widehat{A_>}$ is determined by the so-called two-scale expansion, which we omit for simplicity, and refer to \cite[Chapter 2.2]{shen2018periodic} for the periodic case.
\begin{lemma}\label{t2.2}
Assume that the matrix $A_>$ satisfies \eqref{2.2}-\eqref{2.4}, then
$\widehat{A_>}=\widehat{A_+}$, where $\widehat{A_+}$ is the effective operator for the periodic matrix $A_+$.
\end{lemma}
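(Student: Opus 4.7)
The plan is to expand the definition \eqref{2.15} of $\widehat{A_>}$ and subtract the corresponding expression for $\widehat{A_+}$, decomposing the integrand into three types of pieces: a purely periodic part (which gives exactly $\widehat{A_+}$), exponentially decaying perturbations from \eqref{2.4}, and a cross term involving $\nabla(\chi_{>,k}-\chi_{+,k})$, which is controlled only in $L^2(\mathbb{D})$ by Lemma \ref{t2.1}. More precisely, I would write
\begin{equation*}
a_{>,ij}+a_{>,ik}\partial_{y_k}\chi_{>,j}
=\bigl(a_{+,ij}+a_{+,ik}\partial_{y_k}\chi_{+,j}\bigr)
+\bigl(a_{>,ij}-a_{+,ij}\bigr)
+\bigl(a_{>,ik}-a_{+,ik}\bigr)\partial_{y_k}\chi_{+,j}
+a_{>,ik}\partial_{y_k}(\chi_{>,j}-\chi_{+,j}),
\end{equation*}
average over $\mathbb{D}_N=(-N,N)\times\mathbb{T}^{d-1}$, and send $N\to\infty$.

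For the first, purely periodic piece, Fubini and periodicity in $y'$ together with the $y_1$-periodicity of $A_+$ and $\chi_+$ give that its average over $\mathbb{D}_N$ equals $\widehat{A_+}_{ij}$ for every integer $N$, so the contribution in the limit is $\widehat{A_+}_{ij}$. For the two exponentially small pieces, \eqref{2.4} and the estimate \eqref{2.11} with $p=1$ (or a direct geometric sum over unit cells in $y_1$) yield
\begin{equation*}
\int_{\mathbb{D}}\bigl(|a_{>,ij}-a_{+,ij}|+|(a_{>,ik}-a_{+,ik})\partial_{y_k}\chi_{+,j}|\bigr)\,dy\le C,
\end{equation*}
a bound that is uniform in $N$. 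Dividing by $|\mathbb{D}_N|=2N$ shows these averages vanish as $N\to\infty$.

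The main (and really the only) step that needs Lemma \ref{t2.1} is the cross term. Since $\|A_>\|_{L^\infty}\le\Lambda$, by Cauchy--Schwarz and \eqref{2.7},
\begin{equation*}
\int_{\mathbb{D}_N}\bigl|a_{>,ik}\partial_{y_k}(\chi_{>,j}-\chi_{+,j})\bigr|\,dy
\le\Lambda|\mathbb{D}_N|^{1/2}\|\nabla(\chi_{>,j}-\chi_{+,j})\|_{L^2(\mathbb{D})}
\le C N^{1/2},
\end{equation*}
so after division by $|\mathbb{D}_N|$ this piece is $O(N^{-1/2})\to 0$. Combining the three contributions proves both that the limit in \eqref{2.13} defining $\widehat{A_>}$ exists and that it equals $\widehat{A_+}$. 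The potential obstacle would have been if $\nabla(\chi_>-\chi_+)$ were only bounded in $L^2$ on bounded subsets rather than on all of $\mathbb{D}$, but Lemma \ref{t2.1} gives precisely the global $L^2$ bound needed, so the argument is straightforward.
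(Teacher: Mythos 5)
Your proposal is correct and follows essentially the same route as the paper: the paper likewise splits the difference $\widehat{A_>}-\widehat{A_+}$ into the coefficient difference terms, controlled by the exponential decay \eqref{2.4}, and the cross term $A_>\nabla(\chi_>-\chi_+)$, controlled by Cauchy--Schwarz and the global $L^2(\mathbb{D})$ bound of Lemma \ref{t2.1}, which yields the same $O(N^{-1/2})$ decay after averaging over $\mathbb{D}_N$. The only cosmetic difference is that you bound the exponentially decaying pieces in $L^1(\mathbb{D})$ while the paper uses an $L^2$ Cauchy--Schwarz estimate for one of them; both are immediate.
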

\begin{proof}
It is known that $$\widehat{A_+}=\fint_\mathbb{Y} (A_++A_+\nabla \chi_+).$$
Then, according to \eqref{2.4} and \eqref{2.14}, it is easy to see that
\begin{equation}\label{2.16}
\begin{aligned}
\left|\langle A_>\rangle-\langle A_+\rangle\right|
&\leq\lim_{N\rightarrow\infty}\fint_{\mathbb{D}_N}|A_>-A_+|\\
&\leq \lim_{N\rightarrow\infty}\fint_{\mathbb{D}_N}\exp\{-\kappa|y_1|\}\\
&= 0,
\end{aligned}\end{equation}
and
\begin{equation}\label{2.17}\begin{aligned}
\left|\langle A_>\nabla \chi_>\rangle-\langle A_+\nabla\chi_+\rangle\right|&\leq\lim_{N\rightarrow\infty}\fint_{\mathbb{D}_N}\left(|A_>||\nabla\chi_>-\nabla\chi_+|+|A_>-A_+||\nabla\chi_+|\right)\\
&=I_1+I_2,
\end{aligned}\end{equation}
where
\begin{equation}\label{2.18}\begin{aligned}
I_1&\leq \lim_{N\rightarrow\infty}\left(\fint_{\mathbb{D}_N}|A_>|^2\right)^{1/2}\left(\fint_{\mathbb{D}_N}|\nabla\chi_>-\nabla\chi_+|^2\right)^{1/2}\\
&\leq C \lim_{N\rightarrow\infty}{N^{-1/2}}=0,
\end{aligned}\end{equation}
and due to \eqref{2.12},
\begin{equation}\label{2.19}\begin{aligned}
I_2&\leq \lim_{N\rightarrow\infty}\left(\fint_{\mathbb{D}_N}|A_>-A_+|^2\right)^{1/2}
\left(\fint_{\mathbb{D}_N}|\nabla\chi_+|^2\right)^{1/2}\\
&\leq \langle |\nabla\chi_+|^2\rangle ^{1/2}\langle |A_>-A_+|^2\rangle ^{1/2}= 0.
\end{aligned}\end{equation}
Consequently, the desired equality $\widehat{A_>}=\widehat{A_+}$ follows readily from \eqref{2.16}-\eqref{2.19}.
\end{proof}

Moreover, we can obtain the sequence of operators $\mathcal{L}^\ell_{\varepsilon_\ell,>}$ is H-compact in the sense of H-convergence \cite{MR2582099}.
\begin{lemma}\label{t2.3}
Let $\left\{A_{\ell,>}(y)\right\}$ be a sequence of $\mathbb{D}$-periodic matrices satisfying \eqref{2.2}-\eqref{2.4} with the same constants $\lambda$, $\Lambda$ and for some periodic matrix $A_{\ell,+}$ satisfying \eqref{2.2} with the same constant $\kappa$. Let $F_\ell\in H^{-1}(\Omega)$ with $\Omega$ being a bounded Lipschitz domain. Suppose that

\begin{equation}\label{2.20}
\mathcal{L}^\ell_{\varepsilon_\ell,>}(u_\ell)=\operatorname{div}\left(A_{\ell,>}(x/\varepsilon_\ell)\nabla u_\ell\right)=F_\ell\text{ in }\Omega,
\end{equation} where $\varepsilon_\ell\rightarrow 0 $ and $u_{\ell,>}\in H^1(\Omega)$. We further assume that
\begin{equation}\label{2.21}
\left\{\begin{aligned}
&F_\ell\rightarrow F \text{ strongly in }H^{-1}(\Omega),\\
&u_{\ell,>}\rightharpoonup u\text{ weakly in }H^1(\Omega),\\
&\widehat{A_{\ell,>}}\rightarrow A^0,
\end{aligned}\right.\end{equation}
where $\widehat{A_{\ell,>}}$ denotes the effective operator for $\mathcal{L}^\ell_{\varepsilon_\ell,>}$. Then
$A^0$ is a constant matrix satisfying
\begin{equation}\label{2.22}
A^0\xi\cdot\xi\geq \lambda |\xi|^2,\quad\text{for any }\xi\in \mathbb{R}^d,
\end{equation}
and $u\in H^1(\Omega)$ is a weak solution of
\begin{equation}\label{2.23}
\operatorname{div}\left(A^0 \nabla u\right)=F \text{ in }\Omega.
\end{equation}

\end{lemma}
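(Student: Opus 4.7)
I adopt Tartar's method of oscillating test functions. The key simplification is that by Lemma \ref{t2.2}, $\widehat{A_{\ell,>}} = \widehat{A_{\ell,+}}$, so $A^0$ must be the limit of the purely periodic effective matrices. For each $\ell$, let $\chi^*_{\ell,>,k}$ denote the $\mathbb{D}$-periodic adjoint corrector, obtained by replacing $A_{\ell,>}$ with $A_{\ell,>}^T$ in \eqref{2.6}. Applying Lemma \ref{t2.1} to the transposed problem (which satisfies the same structural hypotheses) provides $\nabla(\chi^*_{\ell,>,k} - \chi^*_{\ell,+,k}) \in L^2(\mathbb{D})$ uniformly in $\ell$, and Moser iteration combined with decay at infinity yields a uniform $L^\infty(\mathbb{D})$ bound on this difference as well as on the $\mathbb{Y}$-periodic corrector $\chi^*_{\ell,+,k}$.

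Set $v_\ell^k(x) := x_k + \varepsilon_\ell \chi^*_{\ell,>,k}(x/\varepsilon_\ell)$. By the defining equation for the adjoint corrector, $\operatorname{div}\bigl(A_{\ell,>}^T(x/\varepsilon_\ell) \nabla v_\ell^k\bigr) = 0$ in $\mathcal{D}'(\mathbb{R}^d)$. The elementary rescaling bound $\|g(\cdot/\varepsilon)\|_{L^2(K)}^2 \le C_K \varepsilon \|g\|_{L^2(\mathbb{D})}^2$ for $\mathbb{D}$-periodic $g$, combined with the $L^\infty$ bounds above, yields $v_\ell^k \to x_k$ strongly in $L^2_{\mathrm{loc}}(\Omega)$ and $\nabla v_\ell^k \rightharpoonup e_k$ weakly in $L^2_{\mathrm{loc}}(\Omega)$. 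The heart of the argument is to identify the weak limit of the flux $\eta_\ell^k := A_{\ell,>}^T(x/\varepsilon_\ell) \nabla v_\ell^k$. Decomposing
\begin{equation*}
\eta_\ell^k = A_{\ell,+}^T(x/\varepsilon_\ell)\bigl(e_k + \nabla \chi^*_{\ell,+}(x/\varepsilon_\ell)\bigr) + R_\ell,
\end{equation*}
the remainder $R_\ell$ collects the contribution of $A_{\ell,>}^T - A_{\ell,+}^T$, controlled by the exponential pointwise estimate \eqref{2.4} and Cauchy-Schwarz, together with the contribution of $\nabla(\chi^*_{\ell,>} - \chi^*_{\ell,+})$, controlled by \eqref{2.7} and the rescaling identity. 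Both pieces tend to $0$ in $L^2_{\mathrm{loc}}(\Omega)$, while the principal term is purely periodic and converges weakly to $(\widehat{A_{\ell,+}})^T e_k$ by classical Tartar convergence; Lemma \ref{t2.2} and \eqref{2.21} promote this to $\eta_\ell^k \rightharpoonup (A^0)^T e_k$.

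It then remains to invoke the Murat-Tartar div-curl lemma. Extract a subsequence with $A_{\ell,>}(x/\varepsilon_\ell)\nabla u_\ell \rightharpoonup \sigma$ weakly in $L^2_{\mathrm{loc}}(\Omega)$; its divergence $F_\ell$ is precompact in $H^{-1}_{\mathrm{loc}}$ by \eqref{2.21}. Applied to the pair $(A_{\ell,>}\nabla u_\ell, \nabla v_\ell^k)$ and symmetrically to $(A_{\ell,>}^T \nabla v_\ell^k, \nabla u_\ell)$, div-curl produces the distributional limit of the scalar product $A_{\ell,>}\nabla u_\ell \cdot \nabla v_\ell^k = \nabla u_\ell \cdot A_{\ell,>}^T \nabla v_\ell^k$, giving $\sigma \cdot e_k = (A^0 \nabla u) \cdot e_k$ for every $k$ and hence $\sigma = A^0 \nabla u$. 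Passing to the limit in \eqref{2.20} produces \eqref{2.23}, and ellipticity \eqref{2.22} is inherited from the uniform bound $\widehat{A_{\ell,+}}\xi\cdot\xi\ge\lambda|\xi|^2$ for periodic effective matrices.

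The main technical obstacle is the vanishing $R_\ell \to 0$ in $L^2_{\mathrm{loc}}$: since $\nabla\chi^*_{\ell,>}(x/\varepsilon_\ell)$ is only locally $L^2$-bounded rather than pointwise bounded, one must combine the exponential decay of \eqref{2.4} with the $O(\varepsilon_\ell)$-thinness of the interface strip $\{|x_1|\lesssim\varepsilon_\ell\}$ after scaling, together with the $L^2(\mathbb{D})$-control of $\nabla(\chi^*_{\ell,>}-\chi^*_{\ell,+})$ converted back to $x$-coordinates by the rescaling identity above.
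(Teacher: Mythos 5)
Your proposal is correct and follows essentially the same route as the paper: Tartar's oscillating-test-function method, with the key point being the decomposition of the corrector (and its flux) into the purely periodic part plus a remainder controlled in $L^2(\mathbb{D})$ by \eqref{2.7} and \eqref{2.4}, which the rescaling identity $\int_\Omega|g(x/\varepsilon)|^2\,dx\le C\varepsilon\int_{\mathbb{D}}|g|^2\,dy$ sends to zero, followed by the div-curl lemma. The only difference is that you build the test functions from the adjoint correctors $\chi^*_{\ell,>,k}$ while the paper states the weak convergences \eqref{2.24}--\eqref{2.25} for the primal correctors and defers the rest to the classical argument; your variant is if anything the more complete one when $A_{\ell,>}$ is not symmetric.
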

\begin{proof}
The proof is classical and relies on the Div-Curl Lemma \cite[Theorem 2.3.1]{shen2018periodic}. Therefore, we only emphasize on its main ingredient: the matrix $A_>$ admits correctors $\chi_{>,k}$, $k=1,\cdots,d$, such that
\begin{equation*}
\left\{\begin{aligned}
\int_{\mathbb{D}}|\nabla_y(\chi_{>,k}-\chi_{+,k})|^2\leq C(d,\lambda,\kappa);\\
|\chi_{>,k}(y)-\chi_{+,k}(y)|\rightarrow 0 \text{ as }|y_1|\rightarrow +\infty,
\end{aligned}\right.\end{equation*}
and, for any bounded Lipschitz domain $\Omega$, that satisfy the following weak convergence in $L^2(\Omega,\mathbb{R}^d)$,
\begin{equation}\label{2.24}\begin{aligned}
\nabla_y \chi_{>,k}(x/\varepsilon)&= \left(\nabla_y \chi_{>,k}(x/\varepsilon)-\nabla_y \chi_{+,k}(x/\varepsilon)\right)+\nabla_y \chi_{+,k}(x/\varepsilon)\\
&\rightharpoonup 0 \text{ weakly in }L^2(\Omega,\mathbb{R}^d),
\end{aligned}\end{equation}
since $$\begin{aligned}
\int_\Omega|\nabla_y \chi_{>,k}(x/\varepsilon)-\nabla_y \chi_{+,k}(x/\varepsilon)|^2dx&=\varepsilon^d\int_{\varepsilon^{-1}\Omega}|\nabla_y \chi_{>,k}(y)-\nabla_y \chi_{+,k}(y)|^2dy\\
&\leq C \varepsilon \int_{\mathbb{D}}|\nabla_y \chi_{>,k}(y)-\nabla_y \chi_{+,k}(y)|^2dy,
\end{aligned}$$
where we have used $\nabla_y \chi_k(y)-\nabla_y \chi_{+,k}(y)$ is 1-periodic in $y'$. And similarly, we have
\begin{equation}\label{2.25}\begin{aligned}
(A_>\cdot(\nabla_y y_k+\nabla_y \chi_{>,k}))(x/\varepsilon)=&[(A_>-A_+)\cdot(\nabla_y y_k+\nabla_y \chi_{>,k})](x/\varepsilon)\\
&+[A_+\cdot(\nabla_y \chi_{>,k}-\nabla_y \chi_{+,k})](x/\varepsilon)\\
&+[A_+\cdot(\nabla_y y_k+\nabla_y \chi_{+,k})](x/\varepsilon)\\
\rightharpoonup& \widehat{A_+}\nabla_y y_k \text{ weakly in }L^2(\Omega,\mathbb{R}^d).
\end{aligned}\end{equation}
Consequently, the desired property \eqref{2.23} follows from \eqref{2.20}-\eqref{2.21}, \eqref{2.24}-\eqref{2.25} and the Div-Curl Lemma.
\end{proof}

\begin{lemma}\label{t2.4}
Assume that the matrix $A_>$ satisfies \eqref{2.2}-\eqref{2.5}, and we additionally assume that  $A_+$ satisfies the $VMO$ smoothness condition \eqref{2.5}. Then, for any $1<p<\infty$, $\chi_{>,k}-\chi_{+,k}$ satisfies following uniform  $W^{1,p}$ estimates:
\begin{equation}\label{2.26}
||\nabla_y(\chi_{>,k}-\chi_{+,k})||_{L^p(\mathbb{D})}\leq C_p,\end{equation}
where the constant $C_p$ depends only on $d$, $p$, $\lambda$, $\Lambda$, $\kappa$ and $\rho$.
\end{lemma}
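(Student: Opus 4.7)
Set $w_k := \chi_{>,k} - \chi_{+,k}$. By \eqref{2.9}, $w_k$ is an energy-class $\mathbb{D}$-periodic solution of the divergence-form equation
$$\operatorname{div}\!\bigl(A_>\nabla w_k\bigr)\;=\;\operatorname{div} F_k\quad\text{in }\mathbb{D},\qquad F_k\;:=\;-(A_>-A_+)(\mathbf{e}_k+\nabla\chi_{+,k}),$$
where $\mathbf{e}_k$ is the $k$-th standard basis vector. The same calculation used in \eqref{2.10}-\eqref{2.11} gives $\|F_k\|_{L^p(\mathbb{D})}\leq C_p\bigl(1+\|\nabla\chi_{+,k}\|_{L^p(\mathbb{Y})}\bigr)$ for every $p\in[1,\infty)$, and the last factor is finite by the classical $VMO$ $W^{1,p}$ theory for the periodic corrector $\chi_{+,k}$ on $\mathbb{Y}$ (which uses only the $VMO$ hypothesis on $A_+$ newly imposed in this lemma).

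The plan is to apply the interior $W^{1,p}$ estimate of Chiarenza-Frasca-Longo/Di Fazio for divergence-form elliptic equations with $VMO$ coefficients on unit-size cylinders and then sum. For $m\in\mathbb{Z}$ let $C_m:=[m,m+1]\times\mathbb{T}^{d-1}$ and $\widetilde{C}_m:=[m-1,m+2]\times\mathbb{T}^{d-1}$. Because the $VMO$ modulus $\rho$ in \eqref{2.5} is translation-invariant, the local bound
$$\|\nabla w_k\|_{L^p(C_m)}\;\leq\;C\|F_k\|_{L^p(\widetilde{C}_m)}+C\,\|w_k-(w_k)_{\widetilde{C}_m}\|_{L^p(\widetilde{C}_m)}$$
holds with $C=C(p,d,\lambda,\Lambda,\rho)$ uniform in $m$. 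For $p\in[2,2^*]$ with $2^*:=2d/(d-2)$, the Sobolev-Poincar\'e inequality combined with H\"older (using $pd/(p+d)\leq 2$) yields $\|w_k-(w_k)_{\widetilde{C}_m}\|_{L^p(\widetilde{C}_m)}\leq C\|\nabla w_k\|_{L^2(\widetilde{C}_m)}$. Raising to the $p$-th power, summing over $m\in\mathbb{Z}$, using bounded overlap of $\{\widetilde{C}_m\}$ and the embedding $\ell^2(\mathbb{Z})\subset\ell^p(\mathbb{Z})$ valid for $p\geq 2$, I obtain
$$\|\nabla w_k\|_{L^p(\mathbb{D})}^p\;\leq\; C\|F_k\|_{L^p(\mathbb{D})}^p+C\|\nabla w_k\|_{L^2(\mathbb{D})}^p,$$
and the right-hand side is finite by the first estimate in \eqref{2.7}. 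This settles the range $p\in[2,2^*]$.

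To reach every $p<\infty$ for $d\geq 3$, iterate: the bound just obtained gives $\nabla w_k\in L^{q}(\mathbb{D})$ with $q=2^*$, and repeating the localization with $L^2$ replaced by $L^q$ produces the next Sobolev exponent $qd/(d-q)$; after finitely many steps the exponent exceeds $d$, after which Morrey's embedding absorbs the Poincar\'e term for any $p<\infty$. When $d=2$ a single step suffices since $W^{1,2}(\widetilde{C}_m)\hookrightarrow L^p(\widetilde{C}_m)$ for all finite $p$. The range $1<p<2$ is handled by a standard duality argument: test against $v$ solving the adjoint $\operatorname{div}(A_>^*\nabla v)=\operatorname{div} G$ with compactly supported $G\in L^{p'}(\mathbb{D})$, and use the $W^{1,p'}$ estimate with $p'>2$ just proved.

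The principal obstacle is making the localization-and-summation step genuinely uniform in $m\in\mathbb{Z}$: the constant in the local $VMO$ $W^{1,p}$ estimate must depend only on $d,p,\lambda,\Lambda$ and the modulus $\rho$, which is exactly what \eqref{2.5} supplies by providing one modulus valid on every ball in $\mathbb{R}^d$. Notice that the decay rate $\kappa$ of $A_>-A_+$ enters only through the global size of $F_k$ via \eqref{2.10}-\eqref{2.11}, so no quantitative decay of $w_k$ itself is needed -- the summation over $m$ uses only $\nabla w_k\in L^2(\mathbb{D})$ from Lemma \ref{t2.1} together with $F_k\in L^p(\mathbb{D})$.
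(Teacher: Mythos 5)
Your proof is correct and follows the same overall strategy as the paper: bound $F_k=-(A_>-A_+)(\mathbf{e}_k+\nabla\chi_{+,k})$ in $L^p(\mathbb{D})$ via the exponential decay as in \eqref{2.10}--\eqref{2.11}, apply interior $W^{1,p}$ estimates for $VMO$ coefficients together with the global bound $\nabla(\chi_{>,k}-\chi_{+,k})\in L^2(\mathbb{D})$ from \eqref{2.7} to handle $p>2$, and then use duality for $1<p<2$. The only real divergence is technical: the paper invokes the version of the interior estimate whose lower-order term is $\bigl(\fint_{\Box_{2N}}|\nabla w|^2\bigr)^{1/2}$ on expanding boxes, so that after rescaling this term carries the prefactor $N^{1/p-1/2}\to 0$ and all exponents $2<p<\infty$ are obtained in one pass; you instead use the version with the lower-order term $\|w_k-(w_k)_{\widetilde C_m}\|_{L^p}$ on unit cells, which forces a Sobolev--Poincar\'e step capping the first pass at $p\le 2d/(d-2)$ and hence the bootstrap through the Sobolev exponents. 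Both are legitimate; your unit-cell summation makes the uniformity in the translation parameter completely explicit, at the cost of the extra iteration, while the paper's choice of lower-order term renders the bootstrap unnecessary.
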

\begin{proof}
Since $\chi_{>,k}-\chi_{+,k}$ satisfies the Equation \eqref{2.9} and is $\mathbb{D}$-periodic, then according to the interior $W^{1,p}$ estimates for the VMO coefficients, we have, for any $2< p<\infty$ and $N\in \mathbb{N}$, $\Box_N=:\{x\in\mathbb{R}^d:|x_i|<N,i=1,\cdots,d\}$ and constant $C_p$ depending only on $d$, $p$, $\lambda$, $\Lambda$, $\kappa$ and $\rho$, such that
\begin{equation*}\begin{aligned}
\left(\fint_{\Box_N}|\nabla_y(\chi_{>,k}-\chi_{+,k})|^p\right)^{1/p}
\leq &C_p\left(\fint_{\Box_{2N}}|(a_{>,ij}-a_{+,ij})\partial_{y_i}\chi_{+,k}|^p\right)^{1/p}\\  &\hspace{-4cm}+C_p\left(\fint_{\Box_{2N}}|a_{>,ik}-a_{+,ik}|^p\right)^{1/p}
+C_p\left(\fint_{\Box_{2N}}|\nabla_y(\chi_{>,k}-\chi_{+,k})|^2\right)^{1/2},\\
\end{aligned}\end{equation*}
which, after noting the periodicity in $y'$, is equivalent to
\begin{equation}\label{2.27}\begin{aligned}
\left(\int_{\mathbb{D}_N}|\nabla_y(\chi_{>,k}-\chi_{+,k})|^p\right)^{1/p}\leq & C_pN^{\frac{1}{p}-\frac{1}{2}}\left(\int_{\mathbb{D}_{2N}}|\nabla_y(\chi_{>,k}-\chi_{+,k})|^2\right)^{1/2}\\
&\hspace{-4cm}+C_p \left(\int_{\mathbb{D}_{2N}}|a_{>,ik}-a_{+,ik}|^p\right)^{1/p}
+C_p\left(\int_{\mathbb{D}_{2N}}|(a_{>,ij}-a_{+,ij})\partial_{y_i}\chi_{+,k}|^p\right)^{1/p}.
\end{aligned}\end{equation}
Then, letting $N\rightarrow\infty$ in \eqref{2.27} yields that
\begin{equation}\label{2.28}\begin{aligned}\int_{\mathbb{D}}|\nabla_y(\chi_{>,k}-\chi_{+,k})|^p&\leq C_p \int_{\mathbb{D}}|a_{>,ik}-a_{+,ik}|^p
+\int_{\mathbb{D}}|(a_{>,ij}-a_{+,ij})\partial_{y_i}\chi_{+,k}|^p\\
&\leq C_p,
\end{aligned}\end{equation}
after noting \eqref{2.7}, \eqref{2.10} and \eqref{2.11}. To see $1<p<2$, we use the duality argument. For any $f\in (L^{p'}(\mathbb{D}))^d$ with $\frac{1}{p}+\frac{1}{p'}=1$, we can find a unique $u_R$, up to the addition of a constant, such that $\nabla u_R\in L^2(\mathbb{D})\cap L^{p'}(\mathbb{D})$ solves
\begin{equation*}\partial_i(a^*_{>,ij}\partial_ju_R)=\operatorname{div}f_R\quad \text{ in }\mathbb{D},\end{equation*}
where $f_R=f\eta_R$, and $\eta_R(x_1)=1$, if $|x_1|\leq R$; and $\eta_R(x_1)=0$ if $|x_1|\geq R+1$.
It follows from the $W^{1,p}$ estimates \eqref{2.26} with $2<p'<\infty$, we have
\begin{equation}\label{2.29}||\nabla u_R||_{L^{p'}(\mathbb{D})}\leq C_{p'}||f_R||_{L^{p'}(\mathbb{D})}\leq C_{p'}||f||_{L^{p'}(\mathbb{D})}.\end{equation}
Then,
\begin{equation*}\begin{aligned}
\int_{\mathbb{D}}\nabla_y(\chi_{>,k}-\chi_{+,k})\cdot
f_R&=-\int_{\mathbb{D}}(\chi_{>,k}-\chi_{+,k})\operatorname{div}(f_R)
=-\int_{\mathbb{D}}(\chi_{>,k}-\chi_{+,k})\partial_i(a^*_{>,ij}\partial_ju_R)\\
&=-\int_{\mathbb{D}}\partial_i(a_{>,ij}\partial_j(\chi_{>,k}-\chi_{+,k}))u_R=\int_{\mathbb{D}}F(A_>,A_+,\chi_+)\nabla u_R,
\end{aligned}\end{equation*}with $F(A_>,A_+,\chi_+)$ defined in \eqref{2.9},
which implies that
\begin{equation}\label{2.30}\begin{aligned}
\left|\int_{\mathbb{D}}\nabla_y(\chi_{>,k}-\chi_{+,k})\cdot f_R\right|&\leq C ||\nabla u_R||_{L^{p'}(\mathbb{D})}||F(A_>,A_+,\chi_+)||_{L^{p}(\mathbb{D})}\\
& \leq C ||f||_{L^{p'}(\mathbb{D})}||F(A_>,A_+,\chi_+)||_{L^{p}(\mathbb{D})}.
\end{aligned}\end{equation}

Letting $R\rightarrow \infty$ in \eqref{2.30} and noting $f\in (L^{p'}(\mathbb{D}))^d$ is arbitrary, we have
$$||\nabla_y(\chi_{>,k}-\chi_{+,k})||_{L^p(\mathbb{D})}\leq C_p ||F(A_>,A_+,\chi_+)||_{L^{p}(\mathbb{D})},$$
which yields the desired estimates \eqref{2.26} after noting \eqref{2.10} and \eqref{2.11}.
\end{proof}

\begin{lemma}\label{t2.5}
Under the assumptions in Lemma \ref{t2.4}, we additionally assume that $A_>,A_+\in C^{0,\alpha}(\mathbb{R}^d)$ for some fixed $\alpha>0$, then
\begin{equation}\label{2.31}||\nabla_y(\chi_{>,k}-\chi_{+,k})||_{L^1(\mathbb{D})\cap L^\infty(\mathbb{D})}\leq C,\end{equation} where the constant $C$ depends only on $d$, $\kappa$, $A_>$ and $A_+$.
\end{lemma}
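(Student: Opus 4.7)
The goal is to show that $\nabla w$ itself decays exponentially in $|y_1|$, where $w:=\chi_{>,k}-\chi_{+,k}$ satisfies \eqref{2.9}; such a pointwise decay immediately yields both the $L^\infty$ and the $L^1$ bounds in \eqref{2.31}, because $\mathbb{D}=\mathbb{R}\times\mathbb{T}^{d-1}$ is a one-dimensional-like cylinder.

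\emph{Step 1 (Hölder regularity of the source).} Since $A_+\in C^{0,\alpha}$, classical Schauder theory applied to the periodic corrector equation \eqref{2.8} on $\mathbb{Y}$ gives $\nabla\chi_{+,k}\in C^{0,\alpha}(\mathbb{Y})$, hence $\|\nabla\chi_{+,k}\|_{L^\infty(\mathbb{Y})}\leq C$. Writing the source of \eqref{2.9} as
\[
F=-(A_>-A_+)(e_k+\nabla\chi_{+,k}),
\]
and combining this with $A_>,A_+\in C^{0,\alpha}$ and the pointwise bound $|A_>-A_+|\leq e^{-\kappa|y_1|}$, I get
\[
\|F\|_{L^\infty(B(y_0,1))}+[F]_{C^{0,\alpha}(B(y_0,1))}\leq C\,e^{-\kappa'|y_{0,1}|}
\]
for every $y_0\in\mathbb{D}$ and some $\kappa'\in(0,\kappa)$.

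\emph{Step 2 (Exponential decay of $w$).} From Lemma \ref{t2.4} with $p>d$ and the Morrey embedding, $w$ is uniformly Hölder continuous on $\mathbb{D}$, and combining this with $w\to 0$ at $\pm\infty$ (Lemma \ref{t2.1}) gives $\|w\|_{L^\infty(\mathbb{D})}\leq C$. To upgrade this to exponential decay I would carry out a Saint-Venant-type iteration on the energy
\[
E(R):=\int_{|y_1|>R}|\nabla w|^2\,dy,
\]
testing \eqref{2.9} against $\eta_R^2 w$ for a cutoff $\eta_R$ supported in $\{|y_1|>R-1\}$. Using periodicity in $y'$ and the vanishing of $w$ at infinity to run Poincaré on the slab $\{R-1<|y_1|<R\}$, one obtains
\[
E(R)\leq \theta\,E(R-1)+C\,e^{-2\kappa R}
\]
for some $\theta\in(0,1)$, which iterates to $E(R)\leq Ce^{-\mu_0 R}$. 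Moser's local $L^\infty$ estimate on balls of unit radius then delivers $|w(y_0)|\leq C e^{-\mu_0|y_{0,1}|/2}$.

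\emph{Step 3 (Schauder and conclusion).} Since $A_>$ and $F$ are both $C^{0,\alpha}$, the interior $C^{1,\alpha}$ Schauder estimate for the divergence-form equation \eqref{2.9} yields
\[
|\nabla w(y_0)|\leq \|\nabla w\|_{L^\infty(B(y_0,1/2))}\leq C\bigl(\|w\|_{L^\infty(B(y_0,1))}+\|F\|_{C^{0,\alpha}(B(y_0,1))}\bigr)\leq C\,e^{-\mu|y_{0,1}|}
\]
for some $\mu>0$. Consequently $\|\nabla w\|_{L^\infty(\mathbb{D})}\leq C$ and
\[
\|\nabla w\|_{L^1(\mathbb{D})}\leq C\,|\mathbb{T}^{d-1}|\int_{\mathbb{R}}e^{-\mu|y_1|}\,dy_1\leq C,
\]
which is \eqref{2.31}.

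The main obstacle is Step 2: the Saint-Venant iteration requires a Poincaré inequality on each slab, but $w$ has no built-in zero-average. One would handle this either by subtracting off the transverse mean $\fint_{\mathbb{T}^{d-1}}w(y_1,\cdot)\,dy'$ (which itself decays as $|y_1|\to\infty$ by the $L^2$-integrability of $\nabla w$ from Lemma \ref{t2.1} and the vanishing at infinity), or by running the energy estimate with an exponential weight $e^{\mu|y_1|}$ for $\mu$ small enough that the "bad" commutator terms can be absorbed using ellipticity. Once exponential decay of $w$ is in hand, Step 3 is routine Schauder theory.
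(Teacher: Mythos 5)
Your argument is correct in outline, but it takes a genuinely different route from the paper. The paper gets the $L^\infty$ bound exactly as you do in spirit (uniform interior $C^{1,\alpha}$ Schauder applied to \eqref{2.9}, using \eqref{2.12}), but for the $L^1$ bound it does \emph{not} prove any decay of $\nabla_y(\chi_{>,k}-\chi_{+,k})$: it interpolates the $L^2$ and $L^\infty$ bounds to get $L^p$ bounds uniform in $p>2$, then runs the duality argument of Lemma \ref{t2.4} to get $\int_{\mathbb{D}}|\nabla_y(\chi_{>,k}-\chi_{+,k})|^{1+\delta}\leq C$ with $C$ independent of $\delta$, and lets $\delta\to 0$. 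You instead prove the stronger statement that $\nabla_y(\chi_{>,k}-\chi_{+,k})$ decays pointwise exponentially in $|y_1|$, from which both norms are immediate. Your route is legitimate and is in fact the same Saint-Venant mechanism the paper itself uses later for the interface correctors (see \eqref{3.10}--\eqref{3.15}, \eqref{3.17}--\eqref{3.18} and \eqref{4.17}, leading to the exponential decay \eqref{4.12}); it buys a quantitatively stronger conclusion and avoids having to track the duality constant as the exponent tends to $1$, at the cost of the extra bookkeeping in Step 2.

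Two points in your write-up need the care you already anticipated. First, the boundary term in the energy identity on a slab is handled not by an a priori $L^\infty$ bound on $w$ but by the flux identity: since $\operatorname{div}(A_>\nabla w - F)=0$ away from the support of the cutoff and $\nabla w, F\in L^2(\mathbb{D})$, the total flux $\int_{\{R\}\times\mathbb{T}^{d-1}}\left(A_>\nabla w - F\right)\cdot e_1$ vanishes for a.e.\ $R$ (this is the analogue of \eqref{3.11}), so one may subtract the transverse mean of $w$ and apply Poincar\'e on $\mathbb{T}^{d-1}$ exactly as in \eqref{3.12}--\eqref{3.14}; this is the correct version of your proposed fix. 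Second, in Step 1 the H\"older seminorm of $A_>-A_+$ on a unit ball is only bounded, not exponentially small; to get exponential decay of $\|F\|_{C^{0,\beta}(B(y_0,1))}$ you should interpolate between the $L^\infty$ decay \eqref{2.4} and the bounded $C^{0,\alpha}$ seminorm, which costs you a smaller H\"older exponent $\beta<\alpha$ but is harmless for the Schauder step. Likewise, passing from the energy decay of $\nabla w$ to pointwise decay of $w$ (needed before invoking the local estimate) requires integrating $\partial_1\bar w$ from infinity together with the transverse Poincar\'e inequality, as you sketch in your closing remark. With these details filled in, the proof is complete.
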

\begin{proof}
According to the classical $C^{1,\alpha}$-estimates, \eqref{2.9} and \eqref{2.12}, we know that $\nabla_y(\chi_{>,k}-\chi_{+,k})\in C^{0,\alpha}_{\text{unif}}(\mathbb{D})$. Then for any $2<p<\infty$,
\begin{equation}\label{2.32}\begin{aligned}
\left(\int_{\mathbb{D}}|\nabla_y(\chi_{>,k}-\chi_{+,k})|^p\right)^{1/p}&\leq ||\nabla_y(\chi_{>,k}-\chi_{+,k})||_{L^\infty (\mathbb{D})}^{1-2/p}\left(\int_{\mathbb{D}}|\nabla_y(\chi_{>,k}-\chi_{+,k})|^2\right)^{1/p}\\
&\leq C,
\end{aligned}\end{equation}
with $C$ being independent of $p>2$. Then, using the duality argument as in Lemma \ref{t2.4}, for any $1>\delta>0$, we have
\begin{equation}\label{2.33}
\int_{\mathbb{D}}|\nabla_y(\chi_{>,k}-\chi_{+,k})|^{1+\delta}\leq C, \text{ with }C \text{ being independent of }\delta,
\end{equation}
which, after letting $\delta\rightarrow 0$ in \eqref{2.33}, implies that $$\int_{\mathbb{D}}|\nabla_y(\chi_{>,k}-\chi_{+,k})|\leq C.$$

Thus, we complete this proof.
\end{proof}
To proceed, we need the solvability  to the following Poisson equation defined in $\mathbb{D}$.
\begin{lemma}\label{t2.6}
Let $f\in L^1(\mathbb{D})\cap L^\infty(\mathbb{D})$, then there exists a solution $u$ to the Poisson equation $\Delta u=f$ in $\mathbb{D}$, such that $\nabla u\in C^{0,\beta'}(\mathbb{D})$, for any $\beta'\in (0,1)$.
\end{lemma}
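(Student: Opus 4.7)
The plan is to split off the $y'$-mean of $f$, solve the two resulting problems separately, and then upgrade the regularity by interior elliptic estimates.

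First, I would decompose $f(y_1,y')=f_0(y_1)+g(y_1,y')$ with $f_0(y_1):=\int_{\mathbb{T}^{d-1}}f(y_1,y')dy'$ and $g:=f-f_0$; the latter has zero $y'$-mean on every slice. Fubini gives $f_0\in L^1(\mathbb{R})\cap L^\infty(\mathbb{R})$ and $g\in L^1(\mathbb{D})\cap L^\infty(\mathbb{D})$. For the one-dimensional part I would take $v(y_1):=\int_0^{y_1}\int_0^{\sigma}f_0(t)\,dt\,d\sigma$, so that $v''=f_0$ on $\mathbb{R}$; since $f_0\in L^1(\mathbb{R})$, the derivative $v'(y_1)=\int_0^{y_1}f_0$ is uniformly bounded by $\|f_0\|_{L^1(\mathbb{R})}\leq\|f\|_{L^1(\mathbb{D})}$, and since $f_0\in L^\infty(\mathbb{R})$, $v'$ is in fact Lipschitz in $y_1$, hence $\nabla v\in C^{0,\beta'}(\mathbb{R})$ for every $\beta'\in(0,1)$.

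For the mean-zero component I would construct $w$ by Fourier decomposition in $y'$: writing $g=\sum_{k\in\mathbb{Z}^{d-1}\setminus\{0\}}\hat g_k(y_1)e^{2\pi i k\cdot y'}$, each Fourier coefficient $\hat w_k$ is the unique bounded solution on $\mathbb{R}$ of the ODE $\hat w_k''-4\pi^2|k|^2\hat w_k=\hat g_k$, given explicitly by convolution against the exponentially decaying Green kernel $-\frac{1}{4\pi|k|}e^{-2\pi|k||\cdot|}$. The spectral gap $|k|\geq 1$ for $k\neq 0$ then ensures that $w:=\sum_{k\neq 0}\hat w_k(y_1)e^{2\pi i k\cdot y'}$ converges in $L^2_{\text{loc}}(\mathbb{D})$ to a distributional solution of $\Delta w=g$ on $\mathbb{D}$. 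An equivalent, more functional-analytic route is Lax--Milgram on the Hilbert space $\{\zeta: \int_{\mathbb{T}^{d-1}}\zeta(y_1,\cdot)dy'=0\text{ a.e.},\ \nabla\zeta\in L^2(\mathbb{D})\}$, whose norm-equivalence $\|\zeta\|_{L^2(\mathbb{D})}\leq C\|\nabla_{y'}\zeta\|_{L^2(\mathbb{D})}$ comes from the slice-wise Poincar\'e inequality on $\mathbb{T}^{d-1}$.

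Setting $u:=v+w$ gives a distributional solution of $\Delta u=f$ in $\mathbb{D}$. For the H\"older regularity of $\nabla u$, I would apply interior Calder\'on--Zygmund $W^{2,p}$ estimates on arbitrary balls inside $\mathbb{D}$: since $f\in L^\infty(\mathbb{D})$, one obtains $u\in W^{2,p}_{\text{loc}}(\mathbb{D})$ for every $1<p<\infty$, and the Sobolev embedding $W^{2,p}\hookrightarrow C^{1,1-d/p}$ for $p>d$ then yields $\nabla u\in C^{0,\beta'}(\mathbb{D})$ for any $\beta'\in(0,1)$ by choosing $p$ sufficiently large.

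The main obstacle I anticipate is the careful treatment of the mean-zero component: one must pick the right notion of convergence so that $w$ becomes a genuine distribution on $\mathbb{D}$, and verify the equation $\Delta w=g$ against all test functions $\varphi\in C_c^\infty(\mathbb{D})$, not merely those of zero $y'$-mean. The decomposition $\varphi=\varphi_0+\varphi^\perp$ into slice-mean and mean-zero parts, combined with the fact that $w$ is $y'$-mean-zero and $g$ is $y'$-mean-zero, shows that the zero-mode contribution in the variational identity drops out. Once this is in place, the spectral gap of $-\Delta_{y'}$ on $\mathbb{T}^{d-1}$ and the exponential decay of the transverse Green kernels make the construction robust, and the elliptic regularity step is routine.
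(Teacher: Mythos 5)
Your proposal is correct and follows essentially the same route as the paper: the same decomposition into the slice average $f_0(y_1)$ plus a $y'$-mean-zero remainder, the same explicit double integral for the one-dimensional part, a solvability argument for the mean-zero part that exploits the spectral gap of $-\Delta_{y'}$ on $\mathbb{T}^{d-1}$ (the paper writes $f_2=\operatorname{div}g$ by solving the transverse Poisson equation slice-by-slice and then applies Lax--Milgram, which is the functional-analytic variant you mention), and standard interior elliptic regularity to conclude $\nabla u\in C^{0,\beta'}(\mathbb{D})$. No gap.
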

\begin{proof}Actually, this result has been obtained in \cite{MR3974127}, and we provide it for completeness.
First, we decompose $f(y)=f_1(y_1)+f_2(y)$, with
$$f_1(y_1)=\fint_{\mathbb{T}^{d-1}}f(y_1,y')dy'.$$
It is then easy to see that
\begin{equation}\label{2.34}
f_1,f_2\in L^1(\mathbb{D})\cap L^\infty(\mathbb{D}),\ \fint_{\mathbb{T}^{d-1}}f_2(y_1,y')dy'=0.
\end{equation}
Denote $$N_1(y_1)=\int_{0}^{y_1}\int_{0}^{t}f_1(s)dsdt,$$
then $\Delta N_1(y_1)=\partial_1^2 N_1(y_1)=f_1(y_1)$. Consequently, according to \eqref{2.34} and the regularity theory, $\nabla N_1\in C^{0,\beta'}(\mathbb{D})$, for any $\beta'\in (0,1)$.

To continue, by viewing $y_1$ as a parameter after noting \eqref{2.34}, we can solve the following equation,
\begin{equation}\label{2.35}
\Delta_{y'}N_2(y_1,y')=f_2(y_1,y')\text{ in }\mathbb{T}^{d-1},\ \text{ with }\fint_{\mathbb{T}^{d-1}}N_2(y_1,y')dy'=0.\end{equation}
Define the $\mathbb{D}$-periodic function $g=:(0,\partial_2 N_2,\cdots, \partial_d N_2)$.
 By energy estimates, we have $||\nabla_{y'}N_2(y_1,\cdot)||_{L^2(\mathbb{T}^{d-1})}\leq C ||f_2(y_1,\cdot)||_{L^2(\mathbb{T}^{d-1})}$, with $C$ being independent of $y_1$, which further implies that $||\nabla_{y'}N_2||_{L^2(\mathbb{D})}\leq C ||f_2||_{L^2(\mathbb{D})}$ and $||g||_{L^2(\mathbb{D})}\leq C ||f_2||_{L^2(\mathbb{D})}$.

According to the Equation \eqref{2.35}, we can express $f_2$ as $f_2=\operatorname{div}_yg$ with $g\in L^2({\mathbb{D}})$, then by the
classical Lax-Milgram Theorem, there exists a $\tilde{N}_2$ solving the
Poisson equation $\Delta \tilde{N}_2=\operatorname{div}g$ with $\nabla \tilde{N}_2\in  L^2({\mathbb{D}})$.
Moreover, since $\Delta \tilde{N}_2=f_2$, $\nabla \tilde{N}_2\in  L^2({\mathbb{D}})$ and $f_2\in L^1(\mathbb{D})\cap \in L^\infty(\mathbb{D})$, then according to the $C^{1,\alpha}$ regularity estimates, $\nabla \tilde{N}_2\in C^{0,\beta'}(\mathbb{D})$, for any $\beta'\in (0,1)$.

Consequently, $N_1+\tilde{N}_2$ is the desired solution satisfying $\nabla (N_1+\tilde{N}_2)\in C^{0,\beta'}(\mathbb{D})$, for any $\beta'\in (0,1)$.\end{proof}

With Lemma \ref{t2.5} and Lemma \ref{t2.6} at hand, we introduce the so-called flux corrector $\phi_>$, stated in the following lemma.

\begin{lemma}\label{t2.7}
Under the assumptions in Lemma \ref{t2.5},  denote
\begin{equation}\label{2.36}
B_{>,ij}=\widehat{a}_{>,ij}-a_{>,ij}-a_{>,ik}\partial_{y_k}\chi_{>,j}\quad \text{ in }\mathbb{D}.
\end{equation}

Then there exists the so-called $\mathbb{D}$-periodic flux corrector $\phi_>$, such that
\begin{equation}\label{2.37}\begin{aligned}
B_{>,ij}=\partial_{y_k}\phi_{>,kij}\text{ in }\mathbb{D},\ \phi_{>,kij}=-\phi_{>,ikj},\ \phi_>\in L^\infty(\mathbb{D}).
\end{aligned}\end{equation}
\end{lemma}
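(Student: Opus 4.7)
The plan is to construct $\phi_>$ as a perturbation of the classical periodic flux corrector $\phi_+$ associated with $A_+$, with the perturbation handled by the Poisson solvability result of Lemma \ref{t2.6}.

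First I would record two identities. The corrector equation \eqref{2.6} gives $\partial_{y_i}B_{>,ij}=-\partial_{y_i}a_{>,ij}-\partial_{y_i}(a_{>,ik}\partial_{y_k}\chi_{>,j})=0$, and identically $\partial_{y_i}B_{+,ij}=0$. Next, from standard periodic homogenization theory (see \cite{shen2018periodic}), since $B_{+,ij}$ is $1$-periodic with mean zero and divergence-free in $i$, there exists a $1$-periodic antisymmetric $\phi_{+,kij}\in L^\infty(\mathbb{Y})$ such that $B_{+,ij}=\partial_{y_k}\phi_{+,kij}$. This flux corrector for the purely periodic operator will serve as the principal part of $\phi_>$.

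The main step is to estimate the difference $\tilde B_{ij}:=B_{>,ij}-B_{+,ij}$. Since Lemma \ref{t2.2} gives $\widehat{A_>}=\widehat{A_+}$, the constant parts cancel and
$$\tilde B_{ij}=-(a_{>,ij}-a_{+,ij})-(a_{>,ik}-a_{+,ik})\partial_{y_k}\chi_{>,j}-a_{+,ik}\partial_{y_k}(\chi_{>,j}-\chi_{+,j}).$$
Combining the exponential decay \eqref{2.4}, the boundedness of $\nabla\chi_+$, and the crucial bound \eqref{2.31} from Lemma \ref{t2.5} that $\nabla(\chi_{>,k}-\chi_{+,k})\in L^1(\mathbb{D})\cap L^\infty(\mathbb{D})$, I obtain $\tilde B_{ij}\in L^1(\mathbb{D})\cap L^\infty(\mathbb{D})$. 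Lemma \ref{t2.6} then yields, for each pair $(i,j)$, a $\mathbb{D}$-periodic function $F_{ij}$ with $\Delta F_{ij}=\tilde B_{ij}$ and $\nabla F_{ij}\in C^{0,\beta'}(\mathbb{D})\subset L^\infty(\mathbb{D})$. Setting $\psi_{kij}:=\partial_{y_k}F_{ij}-\partial_{y_i}F_{kj}$ and
$$\phi_{>,kij}:=\phi_{+,kij}+\psi_{kij},$$
the antisymmetry $\phi_{>,kij}=-\phi_{>,ikj}$, the $\mathbb{D}$-periodicity, and the $L^\infty$ bound follow directly from the construction and the properties of $\phi_+$ and $\nabla F$.

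The delicate step, and what I expect to be the main obstacle, is verifying the divergence identity \eqref{2.37}. Computing,
$$\partial_{y_k}\phi_{>,kij}=B_{+,ij}+\Delta F_{ij}-\partial_{y_i}\partial_{y_k}F_{kj}=B_{>,ij}-\partial_{y_i}\partial_{y_k}F_{kj},$$
so it suffices to show $\partial_{y_k}F_{kj}$ is constant in $\mathbb{D}$. Applying $\partial_{y_k}$ to $\Delta F_{kj}=\tilde B_{kj}$ and using $\partial_{y_k}\tilde B_{kj}=0$ shows $\partial_{y_k}F_{kj}$ is harmonic in $\mathbb{D}$. Since $\nabla F_{kj}\in L^\infty(\mathbb{D})$ and the function is $1$-periodic in $y'$, its periodic extension is a bounded harmonic function on $\mathbb{R}^d$, hence constant by the classical Liouville theorem. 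The subtlety here is that $\mathbb{D}$ is unbounded in $y_1$, so one cannot directly invoke compactness; the periodic extension trick combined with the $L^\infty$ gradient bound provided by Lemma \ref{t2.6} is what closes the argument, completing the proof.
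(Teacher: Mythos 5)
Your proposal is correct and follows essentially the same route as the paper: decompose relative to the periodic problem using $\widehat{A_>}=\widehat{A_+}$ (Lemma \ref{t2.2}), show the difference of the fluxes lies in $L^1(\mathbb{D})\cap L^\infty(\mathbb{D})$ via \eqref{2.4} and Lemma \ref{t2.5}, solve the Poisson equation by Lemma \ref{t2.6}, antisymmetrize the gradient of the potential, and kill the extra divergence term by the bounded-harmonic Liouville argument. The only cosmetic difference is that the paper phrases the decomposition at the level of the potentials $N_{>,ij}=N_{+,ij}+(N_{>,ij}-N_{+,ij})$ rather than at the level of $\phi_{+}+\psi$, which is the same construction.
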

\begin{proof}
Similar to the proof of the periodic case, we want to find a matrix function $N_>=(N_{>,ij})$ satisfying
\begin{equation}\label{2.38}
\Delta_y N_{>,ij}=B_{>,ij}\quad  \text{ in }\mathbb{D}.
\end{equation}
First, for the periodic case, it is known that
$$\Delta_y N_{+,ij}=B_{+,ij}=:\widehat{a}_{+,ij}-a_{+,ij}-a_{+,ik}\partial_{y_k}\chi_{+,j}\quad \text{ in }\mathbb{Y}.$$
Then, due to Lemma \ref{t2.2}, $N_{>,ij}-N_{+,ij}$ satisfies
\begin{equation}\label{2.39}\begin{aligned}
\Delta_y (N_{>,ij}-N_{+,ij})&=(a_{+,ij}-a_{>,ij})+(a_{+,ik}-a_{>,ik})\partial_{y_k}\chi_{+,j}
+a_{>,ik}(\partial_{y_k}\chi_{+,j}-\partial_{y_k}\chi_{>,j})\\
&= :\tilde{f}\quad  \text{ in }\mathbb{D}.
\end{aligned}\end{equation}

According to \eqref{2.4} and Lemma \ref{t2.5}, $\tilde{f}\in L^1(\mathbb{D})\cap L^\infty(\mathbb{D})$, then, it follows from Lemma \ref{t2.6} that there exists $N_{>,ij}-N_{+,ij}$ solving the Equation \eqref{2.39} and satisfying $||\nabla_y (N_{>,ij}-N_{+,ij})||_{L^\infty(\mathbb{D})}\leq C$. As a direct consequence, there exists $N_{>,ij}$ solving the Equation \eqref{2.38} and satisfying $||\nabla_y N_{>,ij}||_{L^\infty(\mathbb{D})}\leq C$.

Since $\Delta_y\partial_{y_i}N_{>,ij}=\partial_{y_i}B_{>,ij}=0$ in $\mathbb{D}$ due to \eqref{2.6} and \eqref{2.36}, $N_{>,ij}$ is periodic in $y'$, and $||\nabla_y N_{+,ij}||_{L^\infty(\mathbb{D})}\leq C$,  we know that $\partial_{y_i}N_{>,ij}$ is a bounded harmonic function in $\mathbb{R}^d$ and thus $\partial_{y_i}N_{>,ij}$ is a constant. Let
\begin{equation*}
\phi_{>,kij}=:\partial_{y_k}N_{>,ij}-\partial_{y_i}N_{>,kj}.
\end{equation*}
then
\begin{equation*}
\partial_{y_k}\phi_{>,kij}=\Delta_yN_{>,ij}-\partial_{y_i}\partial_{y_k}N_{>,kj}=B_{>,ij}\quad\text{ in }\mathbb{D},
\end{equation*}
and \begin{equation*}
\phi_{kij}\in L^\infty(\mathbb{D}).
\end{equation*}

Consequently, we complete this proof.
\end{proof}

\begin{lemma}\label{t2.8}
Under the assumptions in Lemma \ref{t2.5},
$||\chi_>||_{L^\infty(\mathbb{D})}\leq C$.
\end{lemma}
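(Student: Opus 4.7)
My plan is to reduce the estimate to controlling $w:=\chi_{>,k}-\chi_{+,k}$ and then combine the two ingredients of Lemma \ref{t2.5}, namely $\nabla w\in L^1(\mathbb{D})\cap L^\infty(\mathbb{D})$. Since $A_+$ is 1-periodic and $C^{0,\alpha}$, classical Schauder theory on the torus $\mathbb{Y}$ applied to \eqref{2.8} gives $\chi_{+,k}\in C^{1,\alpha}(\mathbb{Y})$, hence $\|\chi_{+,k}\|_{L^\infty(\mathbb{Y})}\le C$. It therefore remains to bound $w$ in $L^\infty(\mathbb{D})$.

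The idea is to split $w$ on each cross-section $\{y_1\}\times\mathbb{T}^{d-1}$ into its mean and its oscillation and estimate the two separately. Setting $\bar w(y_1):=\int_{\mathbb{T}^{d-1}}w(y_1,y')\,dy'$, the tangential Lipschitz estimate with constant $\|\nabla w\|_{L^\infty(\mathbb{D})}\le C$ and the finite diameter of $\mathbb{T}^{d-1}$ yield
$$
\sup_{y'\in\mathbb{T}^{d-1}}|w(y_1,y')-\bar w(y_1)|\;\le\;C\qquad\text{uniformly in }y_1,
$$
simply by averaging $|w(y_1,y')-w(y_1,z')|\le\|\nabla w\|_{L^\infty}|y'-z'|_{\mathbb{T}^{d-1}}$ against $z'\in\mathbb{T}^{d-1}$. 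For the mean, differentiation under the integral sign (justified by $\partial_{y_1}w\in L^\infty$) gives $\bar w'(y_1)=\int_{\mathbb{T}^{d-1}}\partial_{y_1}w\,dy'$, and Fubini produces
$$
\int_{\mathbb{R}}|\bar w'(y_1)|\,dy_1\;\le\;\int_{\mathbb{D}}|\partial_{y_1}w|\;\le\;\|\nabla w\|_{L^1(\mathbb{D})}\;\le\;C,
$$
so that $\bar w\in W^{1,1}(\mathbb{R})$ and in particular has finite total variation on $\mathbb{R}$.

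Since $\nabla w\in L^\infty(\mathbb{D})$ forces $w$ to admit a (globally) Lipschitz representative, the anchor value $\bar w(0)$ is a finite number, and the fundamental theorem of calculus yields $|\bar w(y_1)|\le|\bar w(0)|+\|\bar w'\|_{L^1(\mathbb{R})}\le C$ for all $y_1\in\mathbb{R}$. Combining this with the uniform oscillation bound gives $\|w\|_{L^\infty(\mathbb{D})}\le C$, and together with $\|\chi_{+,k}\|_{L^\infty(\mathbb{Y})}\le C$ this finishes the proof.

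The only genuinely non-trivial input is the $L^1\!\cap\!L^\infty$ gradient estimate of Lemma \ref{t2.5}, which is already in hand; the remaining argument is a clean one-dimensional total-variation reduction along the cylinder $\mathbb{D}$, and I do not foresee a serious obstacle in executing it.
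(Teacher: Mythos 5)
Your overall strategy is sound and, apart from the local step, runs parallel to the paper's own argument: the paper also reduces to $w=\chi_{>,k}-\chi_{+,k}$, writes $w(x_1,x')=-\int_{x_1}^{\infty}\partial_{y_1}w(s,x')\,ds$, and bounds the result by $\|\nabla w\|_{L^1(\mathbb{D})}$ from Lemma \ref{t2.5}; where the paper uses a local $L^\infty$ (De Giorgi/maximum-principle) estimate for the equation \eqref{2.9} to pass from pointwise values to integral averages, you instead use the $L^\infty$ gradient bound to control the cross-sectional oscillation $w(y_1,\cdot)-\bar w(y_1)$. Both mechanisms are legitimate, and your splitting into cross-sectional mean plus oscillation is a clean alternative.

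There is, however, one step that does not deliver what the lemma asserts: the treatment of the anchor $\bar w(0)$. You justify $|\bar w(0)|<\infty$ solely from the existence of a Lipschitz representative, which only shows that $w$ is bounded by \emph{some} finite number, not by a constant $C$ controlled by the data. More importantly, the corrector equation \eqref{2.6} determines $\chi_{>,k}$ only up to an additive constant, so without invoking the normalization built into Lemma \ref{t2.1} the statement $\|\chi_>\|_{L^\infty(\mathbb{D})}\le C$ would simply be false (add an arbitrarily large constant to $\chi_{>,k}$; it still solves \eqref{2.6} and is still Lipschitz). The fix is immediate and is exactly what the paper uses: by \eqref{2.7}/\eqref{2.12} one has $|w(y)|\to 0$ as $|y_1|\to\infty$; since $\bar w'\in L^1(\mathbb{R})$, the limit $\lim_{y_1\to+\infty}\bar w(y_1)$ exists and must therefore equal $0$, whence $\bar w(y_1)=-\int_{y_1}^{\infty}\bar w'(s)\,ds$ and $|\bar w(y_1)|\le\|\bar w'\|_{L^1(\mathbb{R})}\le\|\nabla w\|_{L^1(\mathbb{D})}\le C$. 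With the anchor taken at $+\infty$ rather than at $y_1=0$, your proof is complete and quantitatively equivalent to the paper's.
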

\begin{proof} For any $\tilde{y}=(\tilde{y_1},\tilde{y}')\in\mathbb{D}$, and according to the local $L^\infty$-estimates of the Equation \eqref{2.9}, \eqref{2.10}-\eqref{2.12}, we have
$$\begin{aligned}
|\chi_{>,k}(\tilde{y})-\chi_{+,k}(\tilde{y})|\leq& C \int_{(\tilde{y_1}-1,\tilde{y_1}+1)\times\mathbb{T}^{d-1}}\left|\chi_{>,k}(x_1,x')-\chi_{+,k}(x_1,x')\right|dx+C\\
\leq& C \int_{(\tilde{y_1}-1,\tilde{y_1}+1)\times\mathbb{T}^{d-1}}\left|\int^\infty_{x_1}\partial_{y_1}(\chi_{>,k}(s,x')-\chi_{+,k}(s,x'))ds
\right|dx+C\\
\leq& C\int_{(\tilde{y_1}-1,+\infty)\times\mathbb{T}^{d-1}}\left|\partial_{y_1}(\chi_{>,k}-\chi_{+,k})\right|
dx+C,
\end{aligned}$$
which yields the desired estimates after noting $||\chi_{+}||_{L^\infty(\mathbb{D})}\leq C$ and Lemma \ref{t2.5}.
\end{proof}

In view of the homogenization theory in periodic case, we have obtained the all preliminaries to obtain the convergence rates for the matrix $A_>(x/\varepsilon)$.
\begin{thm}[convergence rates]\label{t2.9}
Assume that the matrix $A_>$ satisfies \eqref{2.2}-\eqref{2.4}, and we additionally assume that  $A_>,A_+\in C^{0,\alpha}(\mathbb{R}^d)$ for some $\alpha>0$.
Let $u_{>,\varepsilon}\in H^1_0(\Omega)$, $u_{>,0}\in H^1_0(\Omega)$ be the weak solution to the equation $\mathcal{L}_{>,\varepsilon} u_{>,\varepsilon}=f$ and $\mathcal{L}_{>,0}u_{>,0}=f$ in $\Omega$ with $f\in L^2(\Omega)$ and $\Omega$ being a bounded $C^{1,1}$ domain in $\mathbb{R}^d$, respectively, then we have the following convergence rates estimates:
$$||u_{>,\varepsilon}-u_{>,0}||_{L^2(\Omega)}\leq C \varepsilon ||f||_{L^2(\Omega)},$$
where the constant $C$ depends only on $d$, $\Omega$, $\kappa$, $A_>$ and $A_+$.
\end{thm}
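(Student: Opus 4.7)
The plan is to adapt the standard two-scale expansion plus duality argument from periodic homogenization (see \cite{shen2018periodic}) to the present $\mathbb{D}$-periodic setting, using the exponential decay \eqref{2.4} together with the corrector and flux corrector bounds from Lemmas~\ref{t2.1}--\ref{t2.8} as substitutes for the $L^\infty$ estimates available in the purely periodic case.

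First I would introduce the first-order approximation
$$v_\varepsilon(x) = u_{>,0}(x) + \varepsilon\,\chi_{>,j}(x/\varepsilon)\,\eta_\varepsilon(x)\,S_\varepsilon(\partial_j u_{>,0})(x),$$
where $S_\varepsilon$ is a Steklov-type smoothing operator at scale $\varepsilon$ and $\eta_\varepsilon$ is a Lipschitz cutoff equal to $1$ outside a $c\varepsilon$-neighborhood of $\partial\Omega$ and vanishing on $\partial\Omega$, so that $w_\varepsilon := u_{>,\varepsilon} - v_\varepsilon$ lies in $H^1_0(\Omega)$. Using the corrector equation \eqref{2.6}, the identity $\widehat{A_>}=\widehat{A_+}$ from Lemma~\ref{t2.2}, and the flux corrector relation \eqref{2.37} from Lemma~\ref{t2.7}, a direct computation rewrites
$$\mathcal{L}_{>,\varepsilon} w_\varepsilon = \operatorname{div} R_\varepsilon \quad \text{in } \Omega,$$
where $R_\varepsilon$ is a sum of terms of schematic form $\varepsilon\,\phi_>^\varepsilon\,\nabla(\eta_\varepsilon S_\varepsilon(\nabla u_{>,0})) + \varepsilon\,\chi_>^\varepsilon A_>^\varepsilon\,\nabla(\eta_\varepsilon S_\varepsilon(\nabla u_{>,0}))$ plus boundary-strip contributions. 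The antisymmetry $\phi_{>,kij} = -\phi_{>,ikj}$ is what converts the bulk error $\widehat{a}_{>,ij} - a_{>,ij} - a_{>,ik}\partial_k \chi_{>,j}$ into divergence form, while the $L^\infty$ bounds on $\chi_>$ and $\phi_>$ from Lemmas~\ref{t2.7}--\ref{t2.8} ensure $R_\varepsilon$ is controllable despite the lack of full periodicity in $y_1$.

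Next, since $w_\varepsilon \in H^1_0(\Omega)$, the energy estimate applied to $\mathcal{L}_{>,\varepsilon} w_\varepsilon = \operatorname{div} R_\varepsilon$ yields
$$\|\nabla w_\varepsilon\|_{L^2(\Omega)} \leq C \|R_\varepsilon\|_{L^2(\Omega)} \leq C\varepsilon^{1/2} \|f\|_{L^2(\Omega)},$$
where the factor $\varepsilon^{1/2}$ reflects the measure of the $c\varepsilon$-strip near $\partial\Omega$ where $\eta_\varepsilon$ is active, together with the $H^2$-regularity of $u_{>,0}$ provided by the $C^{1,1}$ assumption on $\Omega$. This gives an $O(\varepsilon^{1/2})$ rate directly, which is not yet optimal. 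To upgrade to $O(\varepsilon)$, I would use duality: for $g \in L^2(\Omega)$, introduce $z_\varepsilon, z_0 \in H^1_0(\Omega)$ solving $\mathcal{L}_{>,\varepsilon}^* z_\varepsilon = g$ and its homogenized analogue, noting that $A_>^*$ satisfies the same hypotheses as $A_>$. Writing
$$\int_\Omega (u_{>,\varepsilon}-u_{>,0})\,g\,dx = \int_\Omega A_>^\varepsilon \nabla u_{>,\varepsilon} \cdot \nabla z_0\, dx - \int_\Omega \widehat{A_+}\nabla u_{>,0} \cdot \nabla z_0\,dx + \int_\Omega f(z_\varepsilon - z_0)\,dx,$$
and substituting for $\nabla u_{>,\varepsilon}$ and $\nabla z_\varepsilon$ their corrector-based first-order expansions, each bilinear pairing is bounded by the product of two $O(\varepsilon^{1/2})$ factors of the type derived above, giving a total bound $C\varepsilon\|f\|_{L^2}\|g\|_{L^2}$. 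Taking the supremum over $\|g\|_{L^2}\leq 1$ yields the claimed $O(\varepsilon)$ rate.

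The main obstacle I anticipate is verifying that $R_\varepsilon$ genuinely has the size $O(\varepsilon^{1/2})$ in $L^2(\Omega)$. In the fully periodic case this is routine from the boundedness of correctors and flux correctors on a single cell, but here one must check that the transition region $\{|x_1|\leq C\}$, where $A_>$ and $A_+$ genuinely differ, contributes only a harmless $O(\varepsilon)$ correction. This should follow from combining the exponential decay \eqref{2.4} with Lemmas~\ref{t2.4}--\ref{t2.5}, which furnish $L^p$ and $L^\infty$ control of $\nabla(\chi_> - \chi_+)$, together with the analogous control of $\phi_>-\phi_+$ built into the proof of Lemma~\ref{t2.7}. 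Once these decay estimates are in place, the remaining bookkeeping mirrors the purely periodic argument.
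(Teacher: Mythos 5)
Your proposal is correct and follows essentially the same route as the paper: the two-scale expansion $w_{>,\varepsilon}=u_{>,\varepsilon}-u_{>,0}-\varepsilon\chi_{>,j}^\varepsilon\partial_j u_{>,0}$, the flux-corrector antisymmetry from Lemma \ref{t2.7} together with the $L^\infty$ bounds on $\chi_>$ and $\phi_>$ to put the error in divergence form, and then the $\varepsilon$-smoothing with a boundary cutoff to get $O(\varepsilon^{1/2})$ in $H^1_0(\Omega)$ followed by duality to recover $O(\varepsilon)$ in $L^2(\Omega)$, exactly as the paper sketches with reference to \cite[Thm.~3.3.2, Thm.~3.4.3]{shen2018periodic}.
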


\begin{proof}Denote \begin{equation}\label{2.40}
w_{>,\varepsilon} =u_{>,\varepsilon}-u_{>,0}-\varepsilon\chi_{>,j}^\varepsilon \partial_j u_{>,0},
\end{equation}
according to Lemma \ref{t2.7}, then, a direct computation shows that
\begin{equation}\label{2.41}\begin{aligned}
\mathcal{L}_{>,\varepsilon} w_{>,\varepsilon}&=\mathcal{L}_{>,0} u_{>,0}-\mathcal{L}_{>,0} u_{>,\varepsilon}-\mathcal{L}_{>,\varepsilon}\left(\varepsilon \chi_{>,j}^\varepsilon\partial_j u_{>,0}\right)\\
&=\operatorname{div}\left[(\widehat{A_>}-A^\varepsilon)\nabla u_{>,0}\right]-\operatorname{div}\left(A_>^\varepsilon \nabla_y \chi_{>,j}^\varepsilon \partial_j u_{>,0}\right)-\varepsilon \operatorname{div}\left(A_>^\varepsilon \chi_j^\varepsilon \nabla \partial_j u_{>,0}\right)\\
&=\operatorname{div}\left(B_>^\varepsilon \nabla u_{>,0}\right)-\varepsilon \operatorname{div}\left(A_>^\varepsilon \chi_{>,j}^\varepsilon \nabla \partial_j u_{>,0}\right)\\
&=\varepsilon\partial_k\left( \phi_{>,kij}^\varepsilon \partial_{ij}u_{>,0}\right)-\varepsilon \operatorname{div}\left(A_>^\varepsilon \chi_{>,j}^\varepsilon \nabla \partial_j u_{>,0}\right).
\end{aligned}\end{equation}

It seems that, according to $\chi_>,\phi_>\in L^\infty$, we can obtain
\begin{equation}\label{2.42}
||u_{>,\varepsilon}-u_{>,0}||_{L^2(\Omega)}\leq C \varepsilon ||u_{>,0}||_{H^2(\Omega)}\leq C\varepsilon ||f||_{L^2(\Omega)},\end{equation}
after multiplying the Equation \eqref{2.41} by $w_{>,\varepsilon}$ and integration by parts.

However, it is not rigorous in the above computation, since  $w_{>,\varepsilon}\neq 0$ on the boundary $\partial \Omega$. Actually, if one consider the $\varepsilon$-smoothing method with a boundary cut-off function, one can
obtain the $O(\varepsilon^{1/2})$ convergence rates, and recover the $O(\varepsilon)$ convergence rates by
duality argument, which we omit for simplicity. For readers' convenience, we refer to \cite[Thm. 3.3.2, Thm. 3.4.3]{shen2018periodic} for more details.
\end{proof}

Moreover, we can also obtain the following interior Lipschitz estimates.
 \begin{thm}[interior Lipschitz estimates]\label{t2.10}
Under the assumptions in Theorem \ref{t2.9}, let $u_{>,\varepsilon}\in H^1(B)$ be a weak solution to the equation
${\mathcal{L}}_{>,\varepsilon} u_{>,\varepsilon}=f$ with $f\in L^p(B)$ for some $p>d$ and some ball  $B=B(x_0,2R)$ with $x_0\in \mathbb{R}^d$ and $R>0$, then
$$||\nabla u_{>,\varepsilon}||_{L^\infty (B(x_0,R))}\leq C_p\left\{\left(\fint_B |\nabla u_{>,\varepsilon}|^2\right)^{1/2}+R\left(\fint_B |f|^p\right)^{1/p}\right\},$$
where $C$ depends only on $d$, $p$, $\kappa$, $A_>$ and $A_+$.
\end{thm}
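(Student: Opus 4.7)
My plan is to follow the Avellaneda--Lin compactness scheme, in the streamlined form developed for interface-type problems in \cite{MR3421758}. The two structural ingredients needed for this scheme have already been secured: the $H$-compactness of $\{\mathcal{L}_{>,\varepsilon}\}$ (Lemma \ref{t2.3}), and the fact that the effective matrix $\widehat{A_>}=\widehat{A_+}$ is a \emph{constant} matrix (Lemma \ref{t2.2}), so that every limiting equation enjoys classical interior $C^{1,\eta}$ estimates for every $\eta\in(0,1)$. By rescaling, it suffices to prove the bound for $x_0=0$ and $R=1$.

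The heart of the argument is a one-step improvement lemma: there exist $\theta\in(0,1/4)$, $\sigma\in(0,1/2)$ and $\varepsilon_0>0$ such that whenever $\varepsilon\in(0,\varepsilon_0]$, $\mathcal{L}_{>,\varepsilon}u_\varepsilon=f$ in $B_1$, and $\bigl(\fint_{B_1}|\nabla u_\varepsilon|^2\bigr)^{1/2}+\|f\|_{L^p(B_1)}\le 1$, one can find $M\in\mathbb{R}^d$ and $c\in\mathbb{R}$ with
$$\Bigl(\fint_{B_\theta}|u_\varepsilon-M\cdot x-c|^2\Bigr)^{1/2}\le \theta^{1+\sigma}.$$
I would prove this by contradiction: a failing sequence $(\varepsilon_\ell,A_{\ell,>},f_\ell,u_\ell)$ would, by Caccioppoli and Rellich, have $u_\ell\rightharpoonup u$ weakly in $H^1(B_{3/4})$ and strongly in $L^2$, while $f_\ell\to 0$ in $H^{-1}$. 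Lemma \ref{t2.3} identifies the limit $u$ as a solution of $\operatorname{div}(A^0\nabla u)=0$ for some constant matrix $A^0$, and the $C^{1,\eta}$ regularity of $u$ contradicts the negation of the conclusion for large $\ell$.

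The one-step improvement is then iterated by rescaling in the usual manner, producing affine approximations $\{M_k\cdot x+c_k\}$ on balls of radius $\theta^k$ down to the first $k_*$ with $\theta^{k_*}\le\varepsilon/\varepsilon_0$. The telescoping estimate $|M_{k+1}-M_k|\lesssim\theta^{k\sigma}\Phi$, where $\Phi$ collects $\bigl(\fint_{B_1}|\nabla u_\varepsilon|^2\bigr)^{1/2}+\|f\|_{L^p(B_1)}$, yields $\sup_k|M_k|\le C\Phi$ and the Lipschitz bound at every scale $r\ge\varepsilon$. For scales below $\varepsilon$, I would rescale by $v(y)=\varepsilon^{-1}u_\varepsilon(\varepsilon y)$: the function $v$ satisfies a divergence equation with coefficient $A_>$ evaluated directly (no oscillation), and since $A_>\in C^{0,\alpha}(\mathbb{R}^d)$, classical Schauder estimates furnish the remaining Lipschitz control and close the argument.

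The main obstacle is the compactness step. One needs the sequence $\{A_{\ell,>}\}$ to produce, along a subsequence, a well-defined limit to which Lemma \ref{t2.3} can be applied. The $\mathbb{D}$-periodicity in $y'$, the uniform ellipticity, and most importantly the uniform exponential-decay condition $|A_{\ell,>}-A_{\ell,+}|\le e^{-\kappa|y_1|}$ with a common $\kappa$ are precisely what is required: one extracts a periodic limit $A^0_+$ from $\{A_{\ell,+}\}$ by Arzel\`a--Ascoli (using uniform $C^{0,\alpha}$-smoothness), then recovers $A^0_>$ satisfying the same exponential bound, which legitimizes the application of the H-compactness lemma and yields a constant effective matrix $A^0=\widehat{A^0_+}$. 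Once this is secured, Stages 2 and 3 are routine within the Avellaneda--Lin framework.
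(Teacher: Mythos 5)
Your proposal is correct and follows essentially the same route as the paper: the three-step Avellaneda--Lin compactness scheme (one-step improvement via the $H$-compactness of Lemma \ref{t2.3} and the $C^{1,\eta}$ regularity of solutions to the constant-coefficient limit equation, iteration down to scale $\varepsilon$, and a blow-up argument using $A_>\in C^{0,\alpha}$), which is exactly what the paper sketches before deferring the details to \cite[Theorem 4.1.1]{shen2018periodic}. Your write-up is in fact more explicit than the paper's own proof, in particular about the compactness of the coefficient class needed in the contradiction argument.
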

\begin{proof}
The proof of Theorem \ref{t2.10} is based on the method of compactness argument and it is done in the following three steps:\\

\noindent Step 1. [One-step improvement]. We take advantage of the uniform H-convergence of the multi-scale
problem $\mathcal{L}_{>,\varepsilon} u_{>,\varepsilon}=f$ in $B$ to the homogeneous effective problem
$\mathcal{L}_{>,0} u_{>,0}=f$ in $B$, which states that the multi-scale solution $u_{>,\varepsilon}$
inherits the medium-scale regularity of the solution $u_{>,0}$. In this step, we use Lemma \ref{t2.3}, interior Caccioppoli's inequality for $u_{>,\varepsilon}$ and the $C^{1,\alpha}$ regularity estimates for $u_{>,0}$.\\

\noindent Step 2. [Iteration]. The previous estimates can be iterated to obtain Lipschitz regularity of $u_{>,\varepsilon}$ down to scale $\varepsilon$. In this step, we need to notice the scaling property of $\mathcal{L}_{>,\varepsilon}$ and interior Caccioppoli's inequality for $u_{>,\varepsilon}$.\\

\noindent Step 3. [A blow-up argument]. We use the regularity result of $\mathcal{L}_{>,1}$ to obtain the Lipschitz regularity on scales smaller than $\varepsilon$ due to $A_>\in C^{0,\alpha}$.\\

Since all of the operations above are totally similar to the proof of \cite[Theorem 4.1.1]{shen2018periodic}, we omit it for simplicity, and refer to \cite[Theorem 4.1.1]{shen2018periodic} for the details. \end{proof}

\section{Transferring non-divergence form into divergence form}
In this section, we investigate the non-divergence elliptic equation \eqref{1.1} with unbounded drift in
periodic homogenization with an interface, arising from diffusion process with drift terms under diffusive
rescaling.
Recall that in Section 1, we have introduced the following notations:
\begin{equation}\label{3.1}\begin{aligned}
C_{+,j}=[j,j+1]\times \mathbb{T}^{d-1},\  C_{-,j}=[-j-1,-j]\times \mathbb{T}^{d-1};\
q_{\pm}=\lim_{j\rightarrow\infty}m(C_{\pm,j}).
\end{aligned}\end{equation}

As pointed out in \cite{MR2789509}, it is
straightforward to adapt the proofs in \cite{MR2789509} to cover the case of nonconstant diffusivity as well. Then it follows from \cite[Proposition 5.5]{MR2789509} that
$$\begin{aligned}&\left|\int_{(k,k+1)\times \mathbb{T}^{d-1}}(m(y)-q_+m_+(y))dy\right|+\left|\int_{(-k,-k+1)\times \mathbb{T}^{d-1}}(m(y)-q_-m_-(y))dy\right|\\
&\hspace{2cm}\leq C \exp\{-Ck\}\quad \text{ as }\quad k\rightarrow+\infty.\end{aligned}$$

Actually, it follows from  \cite[Proposition 5.5]{MR2789509} and the definition of the total variation of $m-q_\pm m_\pm$, we have
\begin{equation}\label{3.2}\begin{aligned}&\int_{(k,k+1)\times \mathbb{T}^{d-1}}\left|m(y)-q_+m_+(y)\right|dy+\int_{(-k,-k+1)\times \mathbb{T}^{d-1}}\left|m(y)-q_-m_-(y)\right|dy\\
&\hspace{2cm}\leq C \exp\{-Ck\}\quad \text{ as }\quad k\rightarrow+\infty.\end{aligned}\end{equation}

Moreover, according to \eqref{1.3}, \eqref{1.5} and \eqref{1.8}, it is easy to see that $m(y)-q_+m_+(y)$ satisfies
\begin{equation}\label{3.3}
\partial_{y_iy_j}\left[\tilde{a}_{ij}(y) (m-q_+m_+)\right]-\partial_{y_i}\left[\tilde{b}_{i}(y)\left(m-q_+m_+\right)\right]=0\quad \text{ if }y_1>1,
\end{equation}
then it follows from the $L^\infty$-estimates,  Lipschitz regularity estimates and \eqref{3.2} that
\begin{equation}\label{3.4}
\left|(m-q_+m_+)(y)\right|+\left|\nabla_y(m-q_+m_+)(y)\right|\leq C \exp\{-C|y_1|\},\ \text{ for }y_1\rightarrow+\infty,
\end{equation}
and similarly, we have
\begin{equation}\label{3.5}
\left|(m-q_-m_-)(y)\right|+\left|\nabla_y(m-q_-m_-)(y)\right|\leq C \exp\{-C|y_1|\},\ \text{ for }y_1\rightarrow-\infty.
\end{equation}

Next, multiplying the Equation \eqref{1.10} by $u_\varepsilon$ and integrating the resulting equation over $\mathbb{R}^d$ after using \eqref{1.12}, we have
\begin{equation}\label{3.6}\begin{aligned}
||\nabla u_\varepsilon||^2_{L^2(\mathbb{R}^d)}&\leq C ||f||_{L^{\frac{2d}{d+2}}(\mathbb{R}^d)}||u_\varepsilon||_{L^{\frac{2d}{d-2}}(\mathbb{R}^d)}\\
&\leq C ||f||_{L^{\frac{2d}{d+2}}(\mathbb{R}^d)}||\nabla u_\varepsilon||_{L^{2}(\mathbb{R}^d)},
\end{aligned}\end{equation}
where the constant $C$ depends only on $d$, $\lambda$ and $\Lambda$. Therefor, by the classical Lax-Milgram theorem, there exists a solution $u_\varepsilon$ to the Equation \eqref{1.10}, if $f\in L^{\frac{2d}{d+2}}(\mathbb{R}^d)$ with $d\geq 3$, such that $||u_\varepsilon||_{\dot{H}^{1}(\mathbb{R}^d)}\leq C ||f||_{L^{\frac{2d}{d+2}}(\mathbb{R}^d)}$. Recall that we have used the homogenous Sobolev space $\dot{H}^1(\mathbb{R}^d)=\left\{v:\nabla v\in L^2,||u||_{L^{\frac{2d}{d-2}}}\leq C(d)||\nabla
v||_{L^2}\right\}$ with $d\geq 3$.

In the following content, our main effort is to transfer the Equation \eqref{1.10} into the divergence form \eqref{2.1} considered in Section 2. Then in view of \eqref{2.1} and \eqref{2.4}, we need that $\phi$, associated with \eqref{1.14}, decays exponentially fast in $y_1$ to some 1-periodic matrix $\phi_+$ if $y_1\rightarrow \infty$; and it is similar for the case if $y_1\rightarrow-\infty$. Then, we introduce the following results.

\begin{lemma}\label{t3.1}
Assume $f$ decays  exponentially fast in $y_1$, i.e. $|f(y)|\leq C\exp\{-C|y_1|\}|$, $f$ is $\mathbb{D}$-periodic and $\text{supp}(f)\subset [0,\infty]\times \mathbb{T}^{d-1}$ with $\int_{(0,\infty)\times \mathbb{T}^{d-1}}fdy=0$. Then there exists a $\mathbb{D}$-periodic solution $u$ to the Poisson equation
$\Delta u=f$ in $\mathbb{D}$, such that $\nabla u\in C^{0,\beta'}(\mathbb{D})$, for any $\beta'\in (0,1)$, and $\nabla u$ decays  exponentially fast in $y_1$.
\end{lemma}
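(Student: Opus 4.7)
The plan is to extend the construction in Lemma \ref{t2.6} by exploiting the additional support and mean-zero hypotheses on $f$. First decompose $f(y) = f_1(y_1) + f_2(y)$ where $f_1(y_1) := \fint_{\mathbb{T}^{d-1}} f(y_1, y')\, dy'$ and $f_2 := f - f_1$. Both $f_1$ and $f_2$ are supported in $\{y_1 \geq 0\} \times \mathbb{T}^{d-1}$, inherit the exponential decay of $f$, and $f_2$ has zero average over $\mathbb{T}^{d-1}$ for each fixed $y_1$. Crucially, the global integral condition $\int_{(0,\infty)\times\mathbb{T}^{d-1}} f\, dy = 0$ becomes $\int_0^\infty f_1(s)\, ds = 0$.

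For the first piece, set $F_1(y_1) := \int_{y_1}^\infty f_1(s)\, ds$. The integral condition on $f_1$ combined with its one-sided support force $F_1 \equiv 0$ for $y_1 \leq 0$, while the exponential decay of $f_1$ gives exponential decay of $F_1$ as $y_1 \to +\infty$. The function $N_1(y_1) := -\int_0^{y_1} F_1(s)\, ds$ then satisfies $\partial_1^2 N_1 = f_1$, is bounded on $\mathbb{D}$, and $\nabla N_1 = (-F_1, 0, \ldots, 0)$ decays exponentially in $y_1$.

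For the second piece, I would follow the recipe of Lemma \ref{t2.6} essentially verbatim: solve $\Delta_{y'} N_2(y_1, \cdot) = f_2(y_1, \cdot)$ on $\mathbb{T}^{d-1}$ with zero average (treating $y_1$ as a parameter), form $g := (0, \partial_2 N_2, \ldots, \partial_d N_2)$ so that $\operatorname{div}_y g = f_2$, and produce $\tilde{N}_2$ via Lax--Milgram with $\nabla \tilde{N}_2 \in L^2(\mathbb{D})$. The standard interior $C^{1,\alpha}$ theory applied to $\Delta \tilde{N}_2 = f_2$, with the right-hand side lying in $L^1(\mathbb{D}) \cap L^\infty(\mathbb{D})$, yields $\nabla \tilde{N}_2 \in C^{0,\beta'}(\mathbb{D})$ for every $\beta' \in (0,1)$. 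The candidate solution is $u := N_1 + \tilde{N}_2$.

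The main obstacle is the exponential decay of $\nabla \tilde{N}_2$ in $y_1$, which is not addressed in Lemma \ref{t2.6}. For $y_1 \to -\infty$, $\tilde{N}_2$ is harmonic on $\{y_1 < 0\} \times \mathbb{T}^{d-1}$ because $f_2$ vanishes there; expanding $\tilde{N}_2$ in a Fourier series in $y'$, the zero mode is a harmonic function of $y_1$ alone whose $L^2$ derivative forces it to be constant (normalize to zero), while each nonzero mode satisfies $\tilde{b}_k'' - 4\pi^2|k|^2 \tilde{b}_k = 0$, whose only bounded-at-$-\infty$ solution is $c_k e^{2\pi|k| y_1}$. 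For $y_1 \to +\infty$ the modes satisfy the inhomogeneous ODE $\tilde{b}_k'' - 4\pi^2|k|^2 \tilde{b}_k = \hat{f}_{2,k}(y_1)$ with exponentially decaying right-hand side, and the explicit Green's representation gives exponential decay at rate $\min(2\pi, \kappa)$ for some $\kappa>0$. More robustly, I would run a Caccioppoli-type iteration on the tail energy $E(R) := \int_{\{|y_1| > R\} \times \mathbb{T}^{d-1}} |\nabla \tilde{N}_2|^2$: testing the equation against $\tilde{N}_2 \eta^2$ with a cutoff $\eta$ in the tail and using the Poincaré inequality in $y'$ (valid since $\tilde{N}_2$ has zero $y'$-average) yields a geometric recursion $E(R+1) \leq \theta E(R) + C e^{-cR}$ with $\theta \in (0,1)$, from which $E(R)$ decays exponentially. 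Combining this $L^2$ tail decay on each unit strip with the uniform local $C^{0,\beta'}$ bound on $\nabla \tilde{N}_2$ upgrades the estimate to the required pointwise exponential decay.
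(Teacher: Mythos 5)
Your proposal is correct and follows essentially the same route as the paper: the same decomposition $f=f_1+f_2$, the same explicit one-dimensional antiderivative for $f_1$ (using the mean-zero and one-sided support conditions to kill $\partial_1 N_1$ on $y_1\le 0$ and get exponential decay on $y_1>0$), the same Lax--Milgram construction of $\tilde N_2$, and the same tail-energy argument for the exponential decay of $\nabla\tilde N_2$ (the paper tests against $\tilde N_2$ on $(R_1,R_2)\times\mathbb{T}^{d-1}$, uses the vanishing flux $\int_{\{R\}\times\mathbb{T}^{d-1}}\partial_{y_1}\tilde N_2=0$ together with Poincar\'e in $y'$, and closes with a Gronwall-type iteration, which is your geometric recursion in slightly different form), followed by the same local-regularity upgrade to pointwise decay.
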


\begin{proof}
Similar to the proof of Lemma \ref{t2.6}, we decompose $f(y)=f_1(y_1)+f_2(y)$, with
$$f_1(y_1)=\fint_{\mathbb{T}^{d-1}}f(y_1,y')dy'.$$

\noindent
It is then easy to see that
\begin{equation}\label{3.7}\begin{aligned}
f_1,f_2\text{ decays  exponentially fast in }y_1,\  \text{supp}(f_2)\subset [0,\infty]\times \mathbb{T}^{d-1},\\
\text{supp}(f_1)\subset [0,\infty],\ \int_{0}^{+\infty}f_1(y_1)dy_1=0,\ \fint_{\mathbb{T}^{d-1}}f_2(y_1,y')dy'=0.
\end{aligned}\end{equation}

\noindent
Denote $$N_1(y_1)=\int^{+\infty}_{y_1}\int^{+\infty}_{t}f_1(s)dsdt,$$
then $\Delta N_1(y_1)=\partial_1^2 N_1(y_1)=f_1(y_1)$. Consequently, according to \eqref{3.7} and the regularity theory, $\nabla N_1\in C^{0,\beta'}(\mathbb{D})$, for any $\beta'\in (0,1)$. Moreover,
\begin{equation}\label{3.8}\begin{aligned}\partial_1 N_1(y_1)=-\int^{+\infty}_{y_1}f_1(s)ds=-\int^{+\infty}_{0}f_1(s)ds=0,\ \text{ if }y_1\leq 0;\\
\left|\partial_1 N_1(y_1)\right|\leq \int^{+\infty}_{y_1}|f_1(s)|ds\leq C\exp\{-C|y_1|\},\ \text{ if }y_1> 1.
\end{aligned}\end{equation}

To proceed, by viewing $y_1$ as a parameter after noting \eqref{3.7}, we can solve the following equation,
\begin{equation}\label{3.9}\Delta_{y'}N_2(y_1,y')=f_2(y_1,y')\text{ in }\mathbb{T}^{d-1},\ \text{ with }\fint_{\mathbb{T}^{d-1}}N_2(y_1,y')dy'=0.\end{equation}

Similar to the explanation of Lemma \ref{t2.6}, there exists a solution $\tilde{N}_2$  to the Poisson equation $\Delta \tilde{N}_2=f_2$, satisfying $\nabla \tilde{N}_2\in  L^2({\mathbb{D}})$ and $\nabla \tilde{N}_2\in C^{0,\beta'}(\mathbb{D})$, for any $\beta'\in (0,1)$. Moreover, multiplying the equation $\Delta \tilde{N}_2=f_2$ by $\tilde{N}_2$ and integrating over $(R_1,R_2)\times \mathbb{T}^{d-1}$ for any $R_2>R_1\geq 1$, yields that
\begin{equation}\label{3.10}\begin{aligned}
\int_{(R_1,R_2)\times \mathbb{T}^{d-1}}\nabla \tilde{N}_2\cdot \nabla \tilde{N}_2=&\int_{\{R_2\}\times \mathbb{T}^{d-1}}\partial_{y_1} \tilde{N}_2\cdot \tilde{N}_2-\int_{\{R_1\}\times \mathbb{T}^{d-1}}\partial_{y_1} \tilde{N}_2\cdot \tilde{N}_2\\
&-\int_{(R_1,R_2)\times \mathbb{T}^{d-1}} f_2\tilde{N}_2\\
=&:I_1-I_2-I_3.
\end{aligned}\end{equation}

Next, multiplying the equation $\Delta \tilde{N}_2=f_2$ by $1$ and integrating over $(R_1,R_2)\times \mathbb{T}^{d-1}$ for any $R_2>R_1\geq 1$, yields that
\begin{equation*}
\int_{\{R_2\}\times \mathbb{T}^{d-1}}\partial_{y_1} \tilde{N}_2-\int_{\{R_1\}\times \mathbb{T}^{d-1}}\partial_{y_1} \tilde{N}_2=-\int_{(R_1,R_2)\times \mathbb{T}^{d-1}}f_2=0,
\end{equation*}
which implies that
\begin{equation}\label{3.11}
\int_{\{R\}\times \mathbb{T}^{d-1}}\partial_{y_1} \tilde{N}_2=0,
\end{equation} for any $R\geq 1$,
due to $\nabla \tilde{N}_2\in L^2(\mathbb{D})$. Denote $C(R,\tilde{N}_2)=:\fint_{\mathbb{T}^{d-1}} \tilde{N}_2(R,y')dy'$, then
$$I_1=\int_{\{R_2\}\times \mathbb{T}^{d-1}}\partial_{y_1} \tilde{N}_2\cdot \tilde{N}_2=\int_{\{R_2\}\times \mathbb{T}^{d-1}}\partial_{y_1} \tilde{N}_2\cdot (\tilde{N}_2-C(R_2,\tilde{N}_2)),$$
which, after using Poincar\'{e} inequality, implies that
\begin{equation}\label{3.12}
|I_1|\leq C \int_{\{R_2\}\times \mathbb{T}^{d-1}}|\nabla_y \tilde{N}_2|^2.
\end{equation}

Similarly,
\begin{equation}\label{3.13}
|I_2|\leq C \int_{\{R_1\}\times \mathbb{T}^{d-1}}|\nabla_y \tilde{N}_2|^2.
\end{equation}

To estimate $I_3$, after noting \eqref{3.7}, we have,
\begin{equation*}\begin{aligned}
I_3&=\int_{R_1}^{R_2}\int_{\mathbb{T}^{d-1}} f_2(y_1,y')\tilde{N}_2(y_1,y')dy'dy_1\\
&=\int_{R_1}^{R_2}\int_{\mathbb{T}^{d-1}} f_2(y_1,y')\left(\tilde{N}_2(y_1,y')-C(y_1,\tilde{N}_2)\right)dy'dy_1,
\end{aligned}\end{equation*}
which, after using Poincar\'{e} inequality and Holder inequality, implies that
\begin{equation}\label{3.14}\begin{aligned}
|I_3|&\leq C \int_{R_1}^{R_2}\left(\int_{\mathbb{T}^{d-1}} |f_2(y_1,y')|^2 dy'\right)^{1/2}\cdot
\left(\int_{\mathbb{T}^{d-1}} |\nabla_{y'}\tilde{N}_2(y_1,y')|^2 dy'\right)^{1/2}dy_1\\
& \leq C \left(\int_{(R_1,R_2)\times \mathbb{T}^{d-1}}|f_2|^2dy\right)^{1/2}\cdot
\left(\int_{(R_1,R_2)\times \mathbb{T}^{d-1}}|\nabla_y\tilde{N}_2|^2dy\right)^{1/2}\\
& \leq C \exp\{-CR_1\}\left(\int_{(R_1,R_2)\times \mathbb{T}^{d-1}}|\nabla_y\tilde{N}_2|^2dy\right)^{1/2}.
\end{aligned}\end{equation}

Therefore, due to $\nabla \tilde{N}_2\in  L^2({\mathbb{D}})$ and combining \eqref{3.10}-\eqref{3.14} after letting $R_2\rightarrow +\infty$ in \eqref{3.10}, we have
\begin{equation*}
\int_{(R_1,+\infty)\times \mathbb{T}^{d-1}}|\nabla \tilde{N}_2|^2dy \leq C \int_{\{R_1\}\times \mathbb{T}^{d-1}}|\nabla \tilde{N}_2|^2dy'+C \exp\{-CR_1\},
\end{equation*}
which, due to Gronwall' Lemma, further implies that
\begin{equation}\label{3.15}
\int_{(R_1,+\infty)\times \mathbb{T}^{d-1}}|\nabla \tilde{N}_2|^2dy \leq C\exp\{-\theta R_1\},
\end{equation}
for some constant $\theta>0$. Then, according to the Lipschitz regularity estimates, $|\nabla \tilde{N}_2|\leq C\exp\{-\theta |y_1|\}$, as $y_1\rightarrow \infty$. Similarly, we can obtain that $|\nabla \tilde{N}_2|\leq C\exp\{-\theta |y_1|\}$, as $y_1\rightarrow -\infty$.

Consequently, $N_1+\tilde{N}_2$ is the desired solution satisfying $\nabla (N_1+\tilde{N}_2)\in C^{0,\beta'}(\mathbb{D})$, for any $\beta'\in (0,1)$, and $\nabla (N_1+\tilde{N}_2)$ decays  exponentially fast in $y_1$. Thus, we have complete this proof.
\end{proof}

To introduce the following result, we denote
$$\mathcal{H}=\left\{u:\nabla u\in L^2(\mathbb{D}), \fint_{\mathbb{D}_1}u=0\right\},$$
which is a Hilbert space equipped with the inner product: $$\langle u,v\rangle_{\mathcal{H}}=:\int_{\mathbb{D}}\nabla u \cdot \nabla vdy.$$

\begin{lemma}\label{t3.2}
Let $f\in L^2(\mathbb{D})$ with $supp(f)\in \mathbb{D}_1$, then there exists a unique solution $u\in \mathcal{H}$ to the Poisson equation $\Delta u=f$ in $\mathbb{D}$, with the energy estimates $||\nabla u||_{L^2(\mathbb{D})}\leq C ||f||_{L^2(\mathbb{D})}$, for the constant $C$ depending only on $d$. Moreover,
we have the following decay estimates associated with $R_1$,
\begin{equation}\label{3.16}\begin{aligned}
&\int_{(R_1,+\infty)\times \mathbb{T}^{d-1}}|\nabla u|^2dy \leq C\exp\{-\theta |R_1|\},\ \text{ if }R_1>1;\\
&\int_{(-\infty,-R_1)\times \mathbb{T}^{d-1}}|\nabla u|^2dy \leq C\exp\{-\theta |R_1|\},\ \text{ if }R_1<-1,
\end{aligned}\end{equation}
for some constant $\theta$ depending only on $d$.
\end{lemma}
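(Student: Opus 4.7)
The plan is to establish the existence, uniqueness and energy estimate by a direct Lax--Milgram argument on the Hilbert space $\mathcal{H}$, and then to derive the exponential decay by a Saint-Venant/Caccioppoli-type energy argument on the strips $(R_1,R_2)\times\mathbb{T}^{d-1}$, followed by Gronwall's inequality. This parallels the end of the proof of Lemma \ref{t3.1}, which we can imitate once we note that $u$ is harmonic outside $\mathbb{D}_1$.

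For the first step, the coercivity and continuity of $\langle\cdot,\cdot\rangle_{\mathcal{H}}$ are automatic from its very definition. The only nontrivial point is boundedness of the linear functional $v\mapsto -\int_{\mathbb{D}} fv\, dy$ on $\mathcal{H}$: since $\mathrm{supp}(f)\subset \mathbb{D}_1$ and $\fint_{\mathbb{D}_1} v=0$ by the definition of $\mathcal{H}$, Cauchy--Schwarz and the Poincar\'e inequality on $\mathbb{D}_1$ yield
$$\left|\int_{\mathbb{D}} fv\, dy\right|=\left|\int_{\mathbb{D}_1} f(v-\fint_{\mathbb{D}_1} v)\, dy\right|\leq C\|f\|_{L^2(\mathbb{D})}\|\nabla v\|_{L^2(\mathbb{D})}.$$
Lax--Milgram then produces a unique $u\in\mathcal{H}$ solving $\Delta u=f$ weakly with $\|\nabla u\|_{L^2(\mathbb{D})}\leq C\|f\|_{L^2(\mathbb{D})}$.

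For the decay bound in the region $y_1>1$ (the region $y_1<-1$ is symmetric), $u$ is harmonic. Integrating $\Delta u=0$ in the $y'$ direction shows that $\int_{\{R\}\times\mathbb{T}^{d-1}}\partial_{y_1}u\, dy'$ is constant in $R$ for $R>1$, and $\nabla u\in L^2(\mathbb{D})$ forces this constant to vanish, exactly as for \eqref{3.11}. Setting $c(R):=\fint_{\mathbb{T}^{d-1}}u(R,y')\, dy'$ and multiplying $\Delta u=0$ by $u-c(R)$ on the strip $(R_1,R_2)\times\mathbb{T}^{d-1}$ gives
$$\int_{(R_1,R_2)\times\mathbb{T}^{d-1}}|\nabla u|^2\, dy=\int_{\{R_2\}\times\mathbb{T}^{d-1}}\partial_{y_1}u\cdot(u-c(R_2))-\int_{\{R_1\}\times\mathbb{T}^{d-1}}\partial_{y_1}u\cdot(u-c(R_1)).$$
The Poincar\'e inequality on each slice $\mathbb{T}^{d-1}$ together with Cauchy--Schwarz controls each boundary term by $C\int_{\{R\}\times\mathbb{T}^{d-1}}|\nabla u|^2$, exactly as in \eqref{3.12}--\eqref{3.13}.

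Because $\nabla u\in L^2(\mathbb{D})$, one can select a sequence $R_2\to\infty$ along which $\int_{\{R_2\}\times\mathbb{T}^{d-1}}|\nabla u|^2\to 0$; passing to the limit I obtain
$$E(R_1):=\int_{(R_1,+\infty)\times\mathbb{T}^{d-1}}|\nabla u|^2\, dy\leq C\int_{\{R_1\}\times\mathbb{T}^{d-1}}|\nabla u|^2\, dy'=-CE'(R_1),$$
so that $E'(R_1)+\theta E(R_1)\leq 0$ for $\theta=1/C$. Gronwall's lemma (starting from $E(1)\leq\|\nabla u\|_{L^2(\mathbb{D})}^2$) gives $E(R_1)\leq C\exp\{-\theta R_1\}$, which is exactly the first estimate in \eqref{3.16}. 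The main technical subtlety, in my view, is the rigorous justification of vanishing of the boundary integral at $R_2\to\infty$; this is handled precisely through the $L^2$ integrability of $\nabla u$ and the choice of a good subsequence, identical in spirit to the argument leading to \eqref{3.15} in Lemma \ref{t3.1}.
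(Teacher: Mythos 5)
Your proposal is correct and follows essentially the same route as the paper: Lax--Milgram on $\mathcal{H}$ using the Poincar\'e inequality on $\mathbb{D}_1$ for boundedness of the functional, then the Saint-Venant energy argument on strips (harmonicity for $|y_1|>1$, zero flux $\int_{\{R\}\times\mathbb{T}^{d-1}}\partial_{y_1}u=0$, slice-wise Poincar\'e for the boundary terms, and Gronwall), exactly as in the paper's reduction to the proof of \eqref{3.15}. Your explicit differential inequality $E'(R_1)+\theta E(R_1)\leq 0$ is a slightly more detailed rendering of the same Gronwall step.
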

\begin{proof}
For any $v\in \mathcal{H}$, we define
\begin{equation*}\begin{aligned}
F(v)=:\int_{\mathbb{D}}fvdy=\int_{\mathbb{D}_1}fvdy=\int_{\mathbb{D}_1}f\left(v-\fint_{\mathbb{D}_1}v\right)dy
\end{aligned}\end{equation*}

which, by Poincar\'{e} inequality, implies that
\begin{equation*}\begin{aligned}
|F(v)|\leq C||f||_{L^2(\mathbb{D})}||\nabla v||_{L^2(\mathbb{D})}.
\end{aligned}\end{equation*}

Thus, $F$ is a bounded linear functional on $\mathcal{H}$. Then according to the calssical Lax-Milgram Theorem, there exists a unique solution $u\in \mathcal{H}$ to  the Poisson equation $\Delta u=f$ in $\mathbb{D}$, satisfying the energy estimates $||\nabla u||_{L^2(\mathbb{D})}\leq C ||f||_{L^2(\mathbb{D})}$.

To see the integral $\int_{(R_1,+\infty)\times \mathbb{T}^{d-1}}|\nabla u|^2dy$ decays exponentially fast in $R_1>1$, we first note that $u$ satisfies the equation $\Delta u=0$ if $y_1>1$. For $\infty>R_2>R_1>1$,
multiplying the equation $\Delta u=0$ by $u$ and integrating over $(R_1,R_2)\times \mathbb{T}^{d-1}$ yields that
\begin{equation}\label{3.17}\begin{aligned}
\int_{(R_1,R_2)\times \mathbb{T}^{d-1}}\nabla u\cdot \nabla u=&\int_{\{R_2\}\times \mathbb{T}^{d-1}}\partial_{y_1} u\cdot u-\int_{\{R_1\}\times \mathbb{T}^{d-1}}\partial_{y_1} u\cdot u.
\end{aligned}\end{equation}

Next, similar to \eqref{3.11}, for any $R>1$, there holds
\begin{equation*}
\int_{\{R\}\times \mathbb{T}^{d-1}}\partial_{y_1} u=0.
\end{equation*}
Then, totally same to the proof of \eqref{3.15}, we have
\begin{equation}\label{3.18}
\int_{(R_1,+\infty)\times \mathbb{T}^{d-1}}|\nabla u|^2dy \leq C\exp\{-\theta R_1\},
\end{equation}
for some constant $\theta>0$. The proof of $R_1<-1$ is totally similar to the proof of \eqref{3.18}.

Consequently, we complete this proof.
\end{proof}

\begin{lemma}\label{t3.3}
Under the conditions \eqref{1.2}-\eqref{1.3} and $\int_\mathbb{Y} \tilde{b}_{\pm,i}m_\pm dy=0$, for $i=1,\cdots,d$,
there exists a $\mathbb{D}$-periodic matrix function $\phi_b$, such that
\begin{equation}\label{3.19}\begin{aligned}
&b_i(y)=\partial_{k}\phi_{b, ki}(y),\ \phi_{b, ki}(y)=-\phi_{b, ik}(y),\ \phi_b\in L^\infty(\mathbb{D});\\
&\phi_{b,ki}\text{ decays exponentially fast to } q_+\phi_{+,ki} \text{ in }y_1>1;\\
&\phi_{b,ki}\text{ decays exponentially fast to } q_-\phi_{-,ki} \text{ in }y_1<-1,
 \end{aligned}\end{equation}
 with $b$ defined in \eqref{1.11},
for some 1-periodic $\phi_\pm$ satisfying $\phi_\pm=-\phi_\pm^*$ and $\phi_\pm\in L^\infty(\mathbb{R}^d)$.
\end{lemma}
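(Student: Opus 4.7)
The plan is to construct, for each index $i=1,\ldots,d$, a scalar potential $f_i$ on $\mathbb{D}$ solving $\Delta f_i=b_i$ with $\nabla f_i\in L^\infty(\mathbb{D})$, and then to set
$$\phi_{b,ki}:=\partial_{y_k}f_i-\partial_{y_i}f_k.$$
Antisymmetry is built in and $\phi_b\in L^\infty(\mathbb{D})$ follows from $\nabla f\in L^\infty$. The divergence identity $\partial_{y_k}\phi_{b,ki}=b_i$ will follow once we know that $\partial_{y_k}f_k$ is gradient-free; for this I observe that $\Delta(\partial_{y_k}f_k)=\partial_{y_k}b_k=0$ by \eqref{1.12}, so $\partial_{y_k}f_k$ is a bounded harmonic function on $\mathbb{D}=\mathbb{R}\times\mathbb{T}^{d-1}$, and a Fourier decomposition in $y'$ forces every such function to be constant, whence $\partial_{y_k}\phi_{b,ki}=\Delta f_i-\partial_{y_i}(\partial_{y_k}f_k)=b_i$.

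To build $f_i$, first solve the 1-periodic Poisson problems $\Delta N_{\pm,i}=b_{\pm,i}$ on $\mathbb{Y}$; these are solvable because the centering hypothesis $\int_\mathbb{Y}\tilde{b}_{\pm,i}m_\pm dy=0$ combined with the periodicity of $\tilde{a}_{ij}m_\pm$ forces $\int_\mathbb{Y}b_{\pm,i}\,dy=0$, and the periodic flux correctors are $\phi_{\pm,ki}:=\partial_{y_k}N_{\pm,i}-\partial_{y_i}N_{\pm,k}$ (possibly shifted by an antisymmetric constant matrix selected later). Introduce a smooth cutoff $\eta(y_1)$ with $\eta=1$ on $y_1\geq 1$ and $\eta=0$ on $y_1\leq -1$, and put
$$f_{0,i}(y):=q_+\eta(y_1)N_{+,i}(y)+q_-(1-\eta(y_1))N_{-,i}(y).$$
The residual $g_i:=b_i-\Delta f_{0,i}$ is $\mathbb{D}$-periodic, bounded, and decays exponentially as $|y_1|\to\infty$: on $y_1>1$ it equals $b_i-q_+b_{+,i}$ and uses \eqref{3.4}; on $y_1<-1$ it equals $b_i-q_-b_{-,i}$ and uses \eqref{3.5}; and the commutator terms involving $\eta'$ and $\eta''$ are compactly supported in $|y_1|\leq 1$. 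Decompose $g_i=\widetilde{g}_i^++\widetilde{g}_i^0+\widetilde{g}_i^-$ via smooth cutoffs, redistributing mass by fixed bump functions so that $\widetilde{g}_i^\pm$ are supported in $\pm y_1\geq 0$, decay exponentially, and have zero total integral, while $\widetilde{g}_i^0$ is compactly supported. Lemma \ref{t3.1} and its reflected analogue yield tail solutions with $C^{0,\beta'}$ gradients decaying exponentially in $|y_1|$, and Lemma \ref{t3.2} handles the compactly supported piece (its gradient upgraded to $L^\infty$ by interior Schauder estimates). Summing produces $M_i$ with $\Delta M_i=g_i$ and $\nabla M_i\in L^\infty$, and $f_i:=f_{0,i}+M_i$ is the desired potential.

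The main technical obstacle is the zero-integral bookkeeping required by Lemma \ref{t3.1}, since the net mass $\int_\mathbb{D}g_i\,dy$ need not vanish for $i\geq 2$: a na\"ive three-way split cannot simultaneously produce zero-integral tails and leave the remainder in the class handled by Lemma \ref{t3.2}. The cure is to let any residual mass show up as a constant limit of $\partial_{y_1}\bar{M}_i$ as $y_1\to\pm\infty$ (which is still consistent with $\nabla M_i\in L^\infty$) and then absorb those constant limits into an antisymmetric additive constant matrix added to each of $\phi_\pm$. This is admissible because \eqref{3.19} asks only that $\phi_\pm$ be 1-periodic, antisymmetric, and bounded and satisfy the periodic flux identity $\partial_{y_k}\phi_{\pm,ki}=b_{\pm,i}$; all four properties are preserved under such a constant shift. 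With these gauge choices fixed, one has $\phi_{b,ki}-q_+\phi_{+,ki}=\partial_{y_k}M_i-\partial_{y_i}M_k$ modulo the absorbed constants on $y_1>1$, which decays exponentially, and the analogous identity on $y_1<-1$.
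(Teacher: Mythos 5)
Your construction follows the same skeleton as the paper's: glue the periodic potentials $N_{\pm,i}$ to the whole of $\mathbb{D}$ with a cutoff in $y_1$, solve a Poisson equation for the exponentially decaying residual using Lemma \ref{t3.1} and Lemma \ref{t3.2}, and antisymmetrize after observing that $\partial_{y_k}f_k$ is a bounded harmonic function on $\mathbb{D}$, hence constant. Where you genuinely diverge is at the one delicate point, namely the zero-integral hypothesis of Lemma \ref{t3.1}. The paper arranges it by choosing, for each index $i$, a sign-changing cutoff $\psi_+$ with $\int\psi_+(b_i-q_+b_{+,i})=0$ (see \eqref{3.27}--\eqref{3.28}), together with a separate argument for the degenerate case $b_i\equiv q_+b_{+,i}$ near the interface. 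You instead let the unbalanced mass survive: the slice average $\partial_{y_1}\bar M_i$ then tends to two (possibly different) constants $c_i^{\pm}$ at $\pm\infty$, and you absorb the resulting constant antisymmetric matrices $c_i^{+}\delta_{k1}-c_k^{+}\delta_{i1}$ (and likewise at $-\infty$) into $\phi_\pm$. This is legitimate because the lemma only asks for \emph{some} 1-periodic, antisymmetric, bounded $\phi_\pm$, and a constant antisymmetric shift preserves the identity $\partial_{y_k}\phi_{\pm,ki}=b_{\pm,i}$. Your resolution is cleaner — it avoids the paper's case analysis and the per-index tuning of $\psi_+$ — at the cost of a small bookkeeping burden downstream: your $\phi_\pm$ are no longer mean-zero, so the periodic limit matrices $q_\pm(A_\pm+\phi_\pm)$, and hence $\widehat{A_\pm}$ and the transmission coefficients $\vartheta_j$ in \eqref{4.5}, acquire a constant antisymmetric correction; this does not affect the interior operator or the arguments of Section 4, but it should be recorded if one wants the explicit identification of $\widehat{A}$ asserted in Theorem \ref{t1.1}. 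Two minor points to tighten: (i) the construction of $M_i$ should be made concrete rather than left as a "redistribution of mass" — the clean route is to split $g_i$ into its slice average $\bar g_i(y_1)$, solved by the explicit one-dimensional integral whose derivative converges exponentially to constants, plus the zero-slice-mean remainder, handled exactly as $N_2$ in the proof of Lemma \ref{t3.1}, which requires no global mass condition; (ii) "gradient-free" should read "constant" for $\partial_{y_k}f_k$, which your Liouville-type argument indeed delivers.
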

\begin{proof}
Denote ${\beta}_{\pm,i}=\tilde{b}_{\pm,i}m_{\pm}$ and ${a}_{\pm,ij}=\tilde{a}_{ij}m_{\pm}$, then set
\begin{equation}\label{3.20}
{b}_{\pm,i}(y)={\beta}_{\pm,i}(y)-\partial_{y_j}{a}_{\pm,ij}(y),
\end{equation}
Then, it follows from \eqref{1.5} and $\int_\mathbb{Y} \tilde{b}_{\pm,i}m_\pm dy=0$, for $i=1,\cdots,d$, that
\begin{equation}\label{3.21}
\partial_{y_i}{b}_{\pm,i}=0 \text{ in }\mathbb{Y},\quad \int_\mathbb{Y}{b}_i(y)dy=0.
\end{equation}
It follows from \eqref{3.21} that there exist 1-periodic matrix functions $\phi_{\pm}$, such that

\begin{equation}\label{3.22}b_{\pm,i}(y)=\partial_{k}\phi_{\pm,ki}(y),\ \phi_{\pm,ki}(y)=-\phi_{\pm,ik}(y) \text{ in }\mathbb{Y},\quad \fint_\mathbb{Y} \phi_{\pm}=0, \end{equation}
respectively, which were obtained by first solving the Poisson equations
$$\Delta N_{\pm,i}={b}_{\pm,i} \quad \text{ in }\mathbb{Y},\quad \fint_\mathbb{Y} N_{\pm,i}=0,$$
and then setting
$$\phi_{\pm,ki}=\partial_{y_k}N_{\pm,i}-\partial_{y_i}N_{\pm,k}.$$

Similarly, we  want solve the Poisson equation
\begin{equation}\label{3.23}
\Delta N_{b,i}= b_i \quad \text{ in}\quad \mathbb{D}.
\end{equation}

Choosing cut-off functions $\psi_\pm(x_1)$, such that
\begin{equation}\label{3.24}\begin{aligned}
&\psi_+(x_1)=1,\text{ if }x_1\geq 1,\ \psi_+(x_1)=0,\text{ if }x_1\leq 0;\\
&\psi_-(x_1)=1,\text{ if }x_1\leq -1,\ \psi_-(x_1)=0,\text{ if }x_1\geq 0.
\end{aligned}\end{equation}
Then, the existence of the solution to the Poisson Equation \eqref{3.23} is equivalent to the following equation:
\begin{equation}\label{3.25}\begin{aligned}
&\Delta (N_{b,i}-q_+\psi_+N_{+,i}-q_-\psi_-N_{-,i})\\
=&b_i-q_+\psi_+\Delta N_{+,i}-q_-\psi_-\Delta N_{-,i}-2q_+\nabla\psi_+ \nabla N_{+,i}\\
&-2q_-\nabla\psi_- \nabla N_{-,i}-q_+\Delta \psi_+N_{+,i}-q_-\Delta\psi_-N_{-,i}\\
=&(1-\psi_+-\psi_-)b_i+\psi_+(b_i-q_+b_{+,i})+\psi_-(b_i-q_-b_{-,i})\\
&-2q_+\nabla\psi_+ \nabla N_{+,i}-2 q_-\nabla\psi_- \nabla N_{-,i}-q_+\Delta \psi_+N_{+,i}-q_-\Delta\psi_-N_{-,i}\\
=&:\tilde{G}_1+\tilde{G}_2+\tilde{G}_3,
\end{aligned}\end{equation}
for $\tilde{G}_2=:\psi_+(b_i-q_+b_{+,i})$ and $\tilde{G}_3=:\psi_-(b_i-q_-b_{-,i})$.
Note that
\begin{equation}\label{3.26}\begin{aligned}
\psi_\pm(b_i-q_\pm b_{\pm,i})
=&\psi_\pm\left({\beta}_{i}-\partial_{y_j}{a}_{ij}-q_{\pm}{\beta}_{\pm,i}
+q_{\pm}\partial_{y_j}{a}_{\pm,ij}\right)\\
=&\psi_\pm\left[\tilde{b}_{\pm,i}(m-q_\pm m_\pm)-\partial_{y_i}(\tilde{a}_{ij}(m-q_\pm m_\pm))\right],
\end{aligned}\end{equation}
where we have used \eqref{1.11} and \eqref{3.20}. Due to \eqref{3.2} and \eqref{3.4}, $\tilde{G}_2$ decays  exponentially fast in $y_1$. Moreover, we may assume
$b_i-q_+b_{+,i}\not\equiv 0$ on $[0,1]\times \mathbb{T}^{d-1}$, for otherwise, we can define $m\equiv q_+
m_+$ in $[1,\infty)\times \mathbb{T}^{d-1}$, which implies $b_i-q_+b_{+,i}\equiv 0$ in
$[0,+\infty)\times \mathbb{T}^{d-1}$, due to \eqref{3.26} and $\tilde{b}(y)=\tilde{b}_+(y)$ if $y_1\geq 1$. Thus $b_i$ is smooth in $[0,+\infty)\times \mathbb{T}^{d-1}$, which implies $m$ is smooth in $[0,+\infty)\times \mathbb{T}^{d-1}$ after in view of \eqref{1.9} and \eqref{1.11}, since $\tilde{b}$ and $\tilde{a}$ are smooth.
Due to $\partial_i b_i=q_+\partial_i b_{+,i}=0$ in $[0,+\infty)\times \mathbb{T}^{d-1}$, we know that this $m$ we just defined  solves the
equation $\partial_{y_iy_j}\left(\tilde{a}_{ij}(y)
m(y)\right)-\partial_{y_i}\left(\tilde{b}_{i}(y)m(y)\right)=0$ in $[0,+\infty)\times \mathbb{T}^{d-1}$ with
$\text {supp}(\psi_+(b_i-q_+b_{+,i}))\subset[0,1]\times \mathbb{T}^{d-1}$. According to the uniqueness of the invariant measure defined in \eqref{1.8}, the $m$ we constructed is the invariant measure for $\tilde{\mathcal{L}}$, satisfying $m-q_+
m_+\equiv0$ in $[1,\infty)\times \mathbb{T}^{d-1}$ and $\text {supp}(\psi_+(b_i-q_+b_{+,i}))\subset[0,1]\times \mathbb{T}^{d-1}$ (actually, in the 1-D case, $m-q_+
m_+\equiv0$ in $[1,\infty)\times \mathbb{T}^{d-1}$). Therefore, we can add the term
$\psi_+(b_i-q_+b_{+,i})$ into the discussion of $\tilde{G}_1$.

To proceed, due to \eqref{3.26}, we first note that the integral $\int_{[1,\infty)\times \mathbb{T}^{d-1}}|b_i-q_+b_{+,i}|$
exists and equals to some constant, and $b_i-q_+b_{+,i}\not\equiv0$ is smooth on the integral $[0,1]\times \mathbb{T}^{d-1}$, then we can
choose a smooth function $\psi_+(x_1)$, satisfying
\begin{equation}\label{3.27}\begin{aligned}
&\psi_+(x_1)=1,\text{ if }x_1\geq 1,\ \psi_+(x_1)=0,\text{ if }x_1\leq 0;\\
&\quad \int_{(0,\infty)\times\mathbb{T}^{d-1}}\psi_+(x_1)(b_i-q_+b_{+,i})(x)=0.
\end{aligned}\end{equation}
Note that we do not assume that $\psi_+\geq 0$. Similarly, we may assume
$b_i-q_-b_{-,i}\not\equiv 0$ on $[-1,0]\times \mathbb{T}^{d-1}$, and choose a smooth function $\psi_-(x_1)$, satisfying
\begin{equation}\label{3.28}\begin{aligned}
&\psi_-(x_1)=1,\text{ if }x_1\leq -1,\ \psi_-(x_1)=0,\text{ if }x_1\geq 0;\\
&\quad \int_{(-\infty,0)\times\mathbb{T}^{d-1}}\psi_-(x_1)(b_i-q_-b_{-,i})(x)=0.
\end{aligned}\end{equation}

Since $\tilde{G}_1\in L^\infty(\mathbb{D})$ with $\text{supp}(\tilde{G}_1)\subset \mathbb{D}_1$, then Lemma \ref{t3.2} and the Lipschitz regularity estimates ensure that there exists a solution $u_1$ to the Poisson equation $\Delta u_1=\tilde{G}_1$ in $\mathbb{D}$ with the property that $ \nabla u_1$ decays  exponentially fast in $y_1$.

Moreover, according to Lemma \ref{t3.1}, there exists a solution  $u_2$ to the Poisson equation $\Delta u_2=\tilde{G}_2+\tilde{G}_3$ in $\mathbb{D}$ with the property that $ \nabla u_2$ decays  exponentially fast in $y_1$.

Therefore, there exists a solution $N_{b,i}$ to the Poisson equation $\Delta N_{b,i}=b_i$ in $\mathbb{D}$, such that $\nabla (N_{b,i}-q_+\psi_+N_{+,i}-q_-\psi_-N_{-,i})$  decays  exponentially fast in $y_1$.

Since $\Delta_y\partial_{y_i}N_{b,i}=\partial_{y_i}b_{i}=0$ in $\mathbb{D}$, $N_{b,i}$ is periodic in $y'$, and $||\nabla_y N_{b,i}||_{L^\infty(\mathbb{D})}\leq C$,  we know that $\partial_{y_i}N_{b,i}$ is a bounded harmonic function in $\mathbb{R}^d$ and thus $\partial_{y_i}N_{b,i}$ is a constant.

Let
\begin{equation*}
\phi_{b,ki}=:\partial_{y_k}N_{b,i}-\partial_{y_i}N_{b,k}.
\end{equation*}
then
\begin{equation*}
\partial_{y_k}\phi_{b,ki}=\Delta_yN_{b,i}-\partial_{y_i}\partial_{y_k}N_{b,k}=b_{i}\quad \text{ in }\mathbb{D},
\end{equation*}
and \begin{equation*}\begin{aligned}
&\phi_{b,ki}\text{ decays exponentially fast to } q_+\phi_{+,ki} \text{ in }y_1>1;\\
&\phi_{b,ki}\text{ decays exponentially fast to } q_-\phi_{-,ki} \text{ in }y_1<-1.
\end{aligned}\end{equation*}
Consequently, we complete this proof.
\end{proof}

After obtaining the above results, we are ready to transfer non-divergence form \eqref{1.1} (or \eqref{1.10}) into divergence form \eqref{2.1}. Actually, due to \eqref{1.10} and Lemma \ref{t3.3}, we have
\begin{equation}\label{3.29}\begin{aligned}\partial_i\left({a}_{ij}^\varepsilon\partial_{j}u_\varepsilon\right)
+\frac{1}{\varepsilon}{b}_i^\varepsilon
\partial_i u_\varepsilon=\partial_i\left(({a}_{ij}^\varepsilon+\phi_{b,ij}^\varepsilon)\partial_{j}u_\varepsilon\right)
=fm^\varepsilon\quad\text{in } \mathbb{R}^d,
\end{aligned}\end{equation}
where the coefficient matrix $A+\phi_b$ and the source term $m$ satisfy the following conditions:
\begin{equation}\label{3.30}\begin{aligned}
A,\phi_b,m \text{ are smooth in }\mathbb{D} \text{ and 1-periodic in }y';\ \phi_{b,ij}=-\phi_{b,ji},\ i,j=1,\cdots,d;\\
\Lambda |\xi|^2\geq (A+\phi_b)\xi\cdot\xi=A\xi\cdot\xi\geq \lambda|\xi|^2,\forall\xi\in\mathbb{R}^d,\exists \text{ constants }0<\lambda\leq \Lambda;\\
A+\phi_b,m \text{ decays exponentially fast to }q_+(A_++\phi_+),q_+m_+,\text{ respectively, }\text{if }y_1\rightarrow +\infty;\\
A+\phi_b,m \text{ decays exponentially fast to }q_-(A_-+\phi_-),q_-m_-,\text{ respectively, }\text{if }y_1\rightarrow -\infty,\\
\exists \text{ positive constants }q_\pm;\ A_\pm,\phi_\pm,m_\pm \text{ are 1-periodic and smooth in }y; \ \phi_{\pm}=-\phi_{\pm}^*,\\
\Lambda|\xi|^2\geq (A_\pm+\phi_\pm)\xi\cdot\xi=A_\pm\xi\cdot\xi\geq \lambda|\xi|^2,\forall\xi\in\mathbb{R}^d;\ \fint_{\mathbb{Y}}m_\pm=1.
\end{aligned}\end{equation}

Consequently, we have achieved the aim of this section, and in the next section, our effort is to investigate the elliptic equations of divergence form \eqref{3.29} with coefficient matrix and source term decaying exponentially fast to some different periodic structures across the interface.

\section{Quantitative results for the divergence form}
From now on, we investigate the following divergence-form in elliptic homogenization. Precisely, for $0<\varepsilon<1$, consider
\begin{equation}\label{4.1}
\left\{\begin{aligned}\mathcal{L}_\varepsilon u_\varepsilon=\partial_i\left({a}_{ij}^\varepsilon\partial_{j}u_\varepsilon\right)&
=fm^\varepsilon\quad \text{ in }\quad \mathbb{R}^d,\\
u_\varepsilon& \in \dot{H}^1(\mathbb{R}^d),
\end{aligned}\right.\end{equation}
where the coefficient matrix $A$ and the source term $m$ satisfy the following conditions:
\begin{equation}\label{4.2}\begin{aligned}
A,m \text{ are 1-periodic in }y',\ A,A_\pm\in C^{0,\alpha}(\mathbb{R}^d),\ ||m||_{L^\infty(\mathbb{R}^d)}\leq \Lambda;\\
\Lambda|\xi|^2\geq A\xi\cdot\xi\geq \lambda|\xi|^2,\forall\xi\in\mathbb{R}^d,\exists \text{ constants }0<\lambda\leq \Lambda;\\
A,m \text{ decays exponentially fast to }q_+A_+,q_+m_+,\text{ respectively, }\text{if }y_1\rightarrow +\infty;\\
A,m \text{ decays exponentially fast to }q_-A_-,q_-m_-,\text{ respectively, }\text{if }y_1\rightarrow -\infty,\\
\text{for some positive constants }q_\pm\leq \Lambda;\ A_\pm,m_\pm \text{ are 1-periodic in }y; \fint_{\mathbb{Y}}m_\pm=1, \\
||m_\pm||_{L^\infty(\mathbb{R}^d)}\leq \Lambda;\ \Lambda|\xi|^2\geq A_\pm\xi\cdot\xi\geq \lambda|\xi|^2,\forall\xi\in\mathbb{R}^d.\
\end{aligned}\end{equation}
Precisely, the decay property means that
\begin{equation*}A(y)=\left\{\begin{aligned}&A_{>}(y)\xrightarrow[\text{decays exponentially fast to}]{} q_+A_+, \quad \text{if}\quad y_1>1,\\
&A_<(y)\xrightarrow[\text{decays exponentially fast to}]{}q_-A_-,\quad \text{if}\quad y_1<-1,
\end{aligned}\right.\end{equation*}

Actually, it seems that $A_>(y)$ is only defined in $[1,\infty)\times \mathbb{T}^{d-1}$. However, after a
suitable extension, we can find a $C^{0,\alpha}$ elliptic $\tilde{A_>}$, such that $\tilde{A_>}(y)=A_>(y)$ if $y_1>1$; and $\tilde{A_>}(y)=q_+A_+$ if $y_1\leq 0$. Then $\tilde{A_>}$ satisfies the assumptions in Section 2 and decays exponentially fast to $q_+A_+$ for $|y_1|\rightarrow +\infty$. From now on, we identity $A_>(y)$ with
$\tilde{A_>}(y)$. The similar extension also holds for $A_<$.

In view of the results in Section 2, we may expect that the homogenized operator $\mathcal{L}_0=\operatorname{div}(\widehat{A}\nabla\cdot)$ for $\mathcal{L}_\varepsilon$ has the following form:
\begin{equation}\label{4.3}
\widehat{A}(x)=\left\{\begin{aligned}
q_+\widehat{A_+}=q_+\widehat{A_>},\quad\text{ if }\quad x_1>0,\\
q_-\widehat{A_-}=q_-\widehat{A_<},\quad\text{ if }\quad x_1<0,
\end{aligned}\right.\end{equation}
where $\widehat{A_\pm}$ are the homogenized matrices associated with the periodic matrices $A_\pm$. In general, the matrix $\widehat{A}$ is discontinuous across the interface $\mathcal{I}=:\{x:x_1=0\}$.

\subsection{$\widehat{A}$-harmonic functions}
Inspired by the works \cite{MR3421758,MR3974127}, we introduce the $\widehat{A}$-harmonic functions, which is spanned by the constants functions and the following piecewise linear functions:
\begin{equation}\label{4.4}
P_j(x)=P(x)\cdot e_j=:\left\{\begin{aligned}&x_j,&\text{ if } x_1<0,\\
&x_j+\vartheta_jx_1, &\text{ if } x_1>0,
\end{aligned}\right.\end{equation}
for $j=1,\cdots,d$, where $\vartheta=(\vartheta_1,\cdots,\vartheta_j)$ is related to the transmission matrix through the interface and defined as:
\begin{equation}\label{4.5}
\vartheta_j=\frac{\left(\widehat{a_-}\right)_{1j}-\left(\widehat{a_+}\right)_{1j}}{\left(\widehat{a_+}\right)_{11}}.
\end{equation}

If $\vartheta=0$, then $\widehat{A}$ is constant and the functions $P_j$ are linear.

It is straightforward to check that the functions $P_j$ are solution to
\begin{equation}\label{4.6}\operatorname{div}(\widehat{A}\nabla P_j)=0\quad \text{in}\quad \mathbb{R}^d.\end{equation}
 Actually, by definition, the functions $P_j$ are continuous and their gradients read as
\begin{equation}\label{4.7}\nabla P_j(x)=\left\{\begin{aligned}
&e_j, &\text{ if } x_1<0,\\
&e_j+\vartheta_je_1, &\text{ if } x_1>0.
\end{aligned}\right.\end{equation}

Hence, the functions $P_j$ are $\widehat{A}$-harmonic in $\mathbb{R}^*_-\times \mathbb{R}^{d-1}$ and in $\mathbb{R}^*_+\times \mathbb{R}^{d-1}$, and they satisfy the transmission conditions across the interface:
\begin{equation}\label{4.8}
\begin{gathered}
\lim _{h \rightarrow 0^{+}}\left[\left(\widehat{A} \cdot \nabla P_{j}\right)\left(x+h e_{1}\right)\right] \cdot e_{1}=\lim _{h \rightarrow 0^{+}}\left[\left(\widehat{A} \cdot \nabla P_{j}\right)\left(x-h e_{1}\right)\right] \cdot e_{1}, \\
\lim _{h \rightarrow 0^{+}} \partial_{k} P_{j}\left(x+h e_{1}\right)=\lim _{h \rightarrow 0^{+}} \partial_{k} P_{j}\left(x-h e_{1}\right),
\end{gathered}
\end{equation}
for all $x\in\mathcal{I}$ and $k=2,\cdots,d$.

\subsection{Basic estimates of the correctors}
Since the correctors are meant to turn the $\widehat{A}$-harmonic functions $P_j$ into $A$-harmonic sub-linear functions, the correctors $\chi_k$ should solve the following equation:
\begin{equation}\label{4.9}
\partial_{y_i}\left(a_{ij}(y)\partial_{y_j}(P_k(y)+\chi_k(y))\right)=0 \text{ in }\mathbb{D},\ \chi_k\text{ is 1-periodic in }y'.
\end{equation}

Moreover, it is known in Section 2 that the correctors $\chi_{>,k}$ (or $\chi_{<,k}$, respectively), $k=1,\cdots,d$, for the matrix $A_>$ (or $A_<$) are defined as:
\begin{equation}\label{4.10}
\partial_{y_i}\left(a_{>,ij}(y)\partial_{y_j}\chi_{>,k}(y)\right)=-\partial_{y_i}a_{>,ik}\quad \text{ in }\mathbb{D}.
\end{equation}
We refer to Section 2 for the precise definition and  properties of $\chi_{>,k}$ and $\chi_{<,k}$.

\begin{lemma}\label{t4.1}
Suppose that the matrix $A$ satisfies the conditions in \eqref{4.2}, then there exists a unique $\mathbb{D}$-periodic solution $\chi_j$ to Equation \eqref{4.9}, such that
\begin{equation}\label{4.11}\left\{\begin{aligned}
&\nabla (\chi_j-\chi_{<,j})\in L^2 ((0,+\infty)\times \mathbb{T}^{d-1}),\ |\chi_j-\chi_{<,j}|\rightarrow 0 \text{ as }y_1\rightarrow -\infty;\\
&\nabla (\chi_j-\chi_{>,j}-\vartheta_j \chi_{>,1})\in L^2 ((-\infty,0)\times \mathbb{T}^{d-1}),\ |\chi_j-\chi_{>,j}-\vartheta_j \chi_{>,1}|\rightarrow 0 \text{ as }y_1\rightarrow +\infty.
\end{aligned}\right.\end{equation}
 Moreover, there exist constants $C>0$ and $\theta>0$ such that
\begin{equation}\label{4.12}\left\{\begin{aligned}
|\nabla \chi_j(y)-\nabla \chi_{<,j}(y)|&\leq C\exp(-\theta|y_1|),\ \text{ if }y_1<-1,\\
|\nabla \chi_j(y)-\nabla \chi_{>,j}(y)-\vartheta_j \nabla&\chi_{>,1}(y)|\leq C\exp(-\theta|y_1|),\ \text{ if }y_1>1.
\end{aligned}\right.\end{equation}
\end{lemma}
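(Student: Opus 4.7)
The plan is to construct $\chi_j$ as a small perturbation of a patchwork built from the one-sided correctors of Section 2. Fix a smooth partition of unity $\eta_-(y_1)+\eta_+(y_1)\equiv 1$ on $\mathbb{R}$ with $\eta_-\equiv 1$ on $(-\infty,-1]$ and $\eta_-\equiv 0$ on $[1,+\infty)$, and set
$$\bar g_j(y) := \eta_-(y_1)\,\chi_{<,j}(y) \;+\; \eta_+(y_1)\bigl(\chi_{>,j}(y)+\vartheta_j\,\chi_{>,1}(y)\bigr),$$
which is a bounded $\mathbb{D}$-periodic function. I would look for $\chi_j=\bar g_j+w_j$, with $w_j$ the new unknown.

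To derive the residual equation, I observe that on $\{y_1>1\}$ one has $A=A_>$ (after the extension of Section 4), $\nabla P_j=e_j+\vartheta_j e_1$, and linearity of the corrector equation \eqref{4.10} applied to $\chi_{>,j}$ and $\chi_{>,1}$ yields $\operatorname{div}\bigl(A\nabla(P_j+\bar g_j)\bigr)=0$; an analogous cancellation occurs on $\{y_1<-1\}$ with $A_<$ and $\chi_{<,j}$ (noting $\nabla P_j=e_j$ there). Consequently,
$$\operatorname{div}(A\nabla w_j)=-\operatorname{div}\bigl(A\nabla(P_j+\bar g_j)\bigr) =: F_j\quad\text{in }\mathbb{D},$$
where $F_j$ is supported in the interface strip $\{|y_1|\le 1\}$. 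Its structure is an $L^\infty$ bulk term (arising from cut-off derivatives and from the mismatch between $A$ and $q_\pm A_\pm$ in the transition zone) plus a surface Dirac contribution on $\{y_1=0\}$ of density $a_{11}(0,y')\vartheta_j$, reflecting the normal jump of $\nabla P_j$. Both pieces define bounded linear functionals on the Hilbert space $\mathcal{H}$ of Lemma \ref{t3.2}, the surface term via the trace theorem on $\{y_1=0\}\cap \mathbb{D}$. The Lax--Milgram theorem then produces a unique $w_j\in\mathcal{H}$, and I set $\chi_j:=\bar g_j+w_j$; the jump $[\partial_{y_1}\chi_j]=-\vartheta_j$ is automatically built into the weak formulation.

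For the exponential decay \eqref{4.12}, I exploit that on $\{y_1>1\}$ the equation reduces to $\operatorname{div}(A_>\nabla w_j)=0$ with $\nabla w_j\in L^2$. I would then run the Saint-Venant/Gronwall argument already used in Lemma \ref{t3.2}: testing against $w_j$ on $(R,\infty)\times \mathbb{T}^{d-1}$, invoking Poincar\'e on $\mathbb{T}^{d-1}$ together with the vanishing mean $\int_{\{R\}\times \mathbb{T}^{d-1}}\partial_{y_1}w_j=0$, to obtain $\int_{(R,\infty)\times \mathbb{T}^{d-1}}|\nabla w_j|^2\le Ce^{-\theta R}$. The $C^{0,\alpha}$-Schauder theory upgrades this to pointwise exponential decay of $\nabla w_j$; combined with the identity $\chi_j-\chi_{>,j}-\vartheta_j\chi_{>,1}=w_j$ valid on $\{y_1\ge 1\}$, this produces the second line of \eqref{4.12}. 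A symmetric argument on $\{y_1<-1\}$ yields the first line, and the convergence and integrability claims in \eqref{4.11} follow by integrating the gradient estimates. Uniqueness is clear: two solutions with the stated decay differ by a $\mathbb{D}$-periodic, globally $A$-harmonic function vanishing at both ends of $\mathbb{D}$, and a standard energy identity forces it to be identically zero.

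The principal technical obstacle is the rigorous handling of the surface Dirac source at $y_1=0$ coming from the jump $[\nabla P_j]=\vartheta_j e_1$: one must verify that it is consistent with, and indeed exactly absorbed by, the implicit transmission condition $[\partial_{y_1}\chi_j]=-\vartheta_j$, and that the induced functional $\phi\mapsto \int_{\{y_1=0\}\cap\mathbb{D}}a_{11}(0,y')\vartheta_j\,\phi\,dy'$ is $\mathcal{H}$-continuous via the hyperplane trace theorem. A secondary subtlety is the Saint-Venant energy decay on the infinite half-strip, but this step parallels exactly what was already carried out in Lemmas \ref{t3.1}--\ref{t3.2}.
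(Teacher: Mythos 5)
Your overall architecture is the same as the paper's: the paper also glues the one-sided correctors with cutoffs (it sets $v=\chi_k-\psi_+(\chi_{>,k}+\vartheta_k\chi_{>,1})-\psi_-\chi_{<,k}$), solves for the remainder by Lax--Milgram, and gets \eqref{4.12} from the Saint--Venant decay of Lemma \ref{t3.2} plus local regularity. Your decay, regularity-upgrade and uniqueness steps are fine. The gap is in the existence step. You solve $\operatorname{div}(A\nabla w_j)=F_j$ by Lax--Milgram on $\mathcal{H}=\{u:\nabla u\in L^2(\mathbb{D}),\ \fint_{\mathbb{D}_1}u=0\}$, where $F_j$ is an $L^\infty$ bulk term plus the surface density $a_{11}(0,y')\vartheta_j$ on $\{y_1=0\}$. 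Continuity of this functional on $\mathcal{H}$ is not enough: a test function $\varphi\in C_c^\infty(\mathbb{D})$ must be replaced by $\varphi-\fint_{\mathbb{D}_1}\varphi$ before it lies in $\mathcal{H}$, so the Lax--Milgram solution satisfies $-\operatorname{div}(A\nabla w_j)=F_j-\langle F_j,1\rangle\,|\mathbb{D}_1|^{-1}\mathbf{1}_{\mathbb{D}_1}$ in $\mathcal{D}'(\mathbb{D})$; it solves the intended equation only if the total mass $\langle F_j,1\rangle$ (bulk integral plus $\vartheta_j\int_{\mathbb{T}^{d-1}}a_{11}(0,y')dy'$) vanishes. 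Equivalently, since the fluxes $\int_{\{R\}\times\mathbb{T}^{d-1}}A\nabla(P_j+\bar g_j)\cdot e_1$ are constant for $R>1$ and for $R<-1$ and equal $\widehat{A}(0^+)(e_j+\vartheta_j e_1)\cdot e_1$ and $\widehat{A}(0^-)e_j\cdot e_1$ respectively, the decay of $\nabla w_j$ forces these two constants to coincide. That flux-balance identity is exactly the transmission condition \eqref{4.8}, i.e.\ the definition \eqref{4.5} of $\vartheta_j$, and it is the \emph{only} place where the specific value of $\vartheta_j$ enters. As written, your argument never invokes it and would apparently succeed for an arbitrary $\vartheta_j$, which cannot be correct. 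The obstacle you do flag --- consistency with the local jump $[\partial_{y_1}\chi_j]=-\vartheta_j$ --- is automatic in the weak formulation and is not the issue; the missing step is this global mean-zero condition.

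The paper avoids the problem by a different bookkeeping of the residual: it adds the field $\widehat{A}\nabla P_k$, which is divergence-free across the interface precisely by \eqref{4.8}, so the source becomes $\operatorname{div}(f)+\operatorname{div}(g)$ with $f\in L^\infty$ supported in $[-1,1]\times\mathbb{T}^{d-1}$, and then uses the flux correctors $\phi_>,\phi_<$ of Lemma \ref{t2.7} to replace $g$ by a compactly supported $\tilde g\in L^\infty$. An exact divergence of a compactly supported $L^2$ field has zero total mass automatically and produces no Dirac layer, so Lax--Milgram in $\mathcal{H}$ genuinely solves \eqref{4.9}. To repair your route, insert the flux-balance computation above (or adopt the paper's reduction); the rest of your proposal then goes through.
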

\begin{proof}Actually, this proof is almost identical to \cite[Proposition 5.4]{MR3974127} and \cite[Theorem 5.1]{MR3421758}, and we provide it for completeness.
Choose cut-off functions $\psi_\pm(x_1)$, such that
\begin{equation}\label{4.13}\begin{aligned}
&\psi_+(x_1)=1,\text{ if }x_1\geq 1,\ \psi_+(x_1)=0,\text{ if }x_1\leq 0;\\
&\psi_-(x_1)=1,\text{ if }x_1\leq -1,\ \psi_-(x_1)=0,\text{ if }x_1\geq 0.
\end{aligned}\end{equation}

Next, we define
\begin{equation}\label{4.14}
v(y)=\chi_k(y)-\psi_+(y_1)\left(\chi_{>,k}(y)+\vartheta_k\chi_{>,1}(y)\right)-\psi_-(y_1)\chi_{<,k}(y).
\end{equation}

Therefore, according to \eqref{4.9},
\begin{equation}\label{4.15}\begin{aligned}
-\operatorname{div}(A\nabla v)&=\operatorname{div}(A\nabla P_k)+\operatorname{div}\left(A\cdot \nabla \left\{\psi_+\left(\chi_{>,k}+\vartheta_k\chi_{>,1}\right)+\psi_-\chi_{<,k}\right\}\right)\\
&=:\operatorname{div}(f)+\operatorname{div}(g),
\end{aligned}\end{equation}
where by adding the constant term $\widehat{A}\cdot \nabla P_k$ after noting \eqref{4.6},
$$f=:(1-\psi_+-\psi_-)(A-\widehat{A})\cdot\nabla P_k+A\cdot \nabla \psi_+\left(\chi_{>,k}+\vartheta_k\chi_{>,1}\right)+A\cdot\nabla\psi_-\chi_{<,k},$$
and using
\begin{equation}\label{4.16}\begin{aligned}
g=&:\psi_+\left(A(\nabla P_k+\nabla\chi_{>,k}+\vartheta_k \nabla \chi_{>,1})-\widehat{A_>}\nabla P_k\right)\\
&+\psi_-\left(A(\nabla P_k+\nabla\chi_{<,k})-\widehat{A_<}\nabla P_k\right)\\
=&:g_++g_-.
\end{aligned}\end{equation}

To proceed, we need to rewrite $\operatorname{g}$ in a more suitable form. For simplicity, we only calculate $g_+$. A direct computation shows that
$$\begin{aligned}\operatorname{div}{g_+}&=\partial_i\left(\psi_+\left(a_{ik}(\delta_{kl}+\partial_k\chi_{>,\ell})
-\widehat{a_{>}}_{i\ell}\right)\partial_\ell P_j\right)\\
&=-\partial_i\left(\psi_+\partial_k\phi_{>,ki\ell}\partial_\ell P_j\right)\\
&=-\partial_i\psi_+\partial_k\phi_{>,ki\ell}\partial_\ell P_j-\psi_+\partial_i\partial_k\phi_{>,ki\ell}\partial_\ell P_j\\
&=-\partial_k\left(\partial_i\psi_+\phi_{>,ki\ell}\partial_\ell P_j\right)+\partial_k\partial_i\psi_+\phi_{>,ki\ell}\partial_\ell P_j\\
&=-\partial_k\left(\partial_i\psi_+\phi_{>,ki\ell}\partial_\ell P_j\right),
\end{aligned}$$
since $\nabla P_j$ is constant everywhere but on the interface where $\psi_\pm$ vanish, $\phi_>$ is antisymmetry and $\partial_i\partial_k\phi_{>,ki\ell}=\partial_i B_{>,ij}=0$ given by Lemma \ref{t2.7}.

Thus there holds:
$$\operatorname{div}(g)=\operatorname{div}(\tilde{g})$$
for
$$\tilde{g}_k=-\partial_i\psi_+\phi_{>,ki\ell}\partial_\ell P_j-\partial_i\psi_-\phi_{<,ki\ell}\partial_\ell P_j.$$

Moreover, it is easy to see that
$$f,\tilde{g}\in L^\infty(\mathbb{D}),\ \text{supp}(f,g)\subset [-1,1]\times \mathbb{T}^{d-1}.$$

Consequently, it follows from the classical Lax-Milgram Theorem that there exists a unique $\mathbb{D}$-periodic solution $v$ to Equation \eqref{4.15} such that $\nabla v\in L^2(\mathbb{D})$ and $v\rightarrow 0$ as $|y_1|\rightarrow +\infty$. Equivalently, there exists a unique $\mathbb{D}$-periodic solution $\chi_j$ to the Equation \eqref{4.9}, satisfying \eqref{4.11}. Moreover, totally similar to the proof of \eqref{3.15} (or see \cite[Proposition 5.4]{MR3974127} for more details), we have
\begin{equation}\label{4.17}
\int_{(R_1,+\infty)\times \mathbb{T}^{d-1}}|\nabla v|^2dy \leq C\exp\{-\theta R_1\},
\end{equation}which, by $L^\infty$-estimates, we finally obtain the desired estimate \eqref{4.12}.
\end{proof}

\begin{lemma}\label{t4.2}
Under the conditions in Lemma \ref{t4.1}, $\chi\in L^\infty(\mathbb{D})$.\end{lemma}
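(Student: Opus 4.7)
The plan is to leverage the decomposition from \eqref{4.14},
$$\chi_k(y)=v(y)+\psi_+(y_1)\bigl(\chi_{>,k}(y)+\vartheta_k\chi_{>,1}(y)\bigr)+\psi_-(y_1)\chi_{<,k}(y),$$
and reduce the claim to showing $v\in L^\infty(\mathbb{D})$. Indeed, the cutoffs $\psi_\pm$ are bounded, while Lemma \ref{t2.8} (applied to $A_>$ and to the analogously defined $A_<$) gives $\chi_{>,k},\chi_{<,k},\chi_{>,1}\in L^\infty(\mathbb{D})$. So once $v$ is shown to be bounded, the full statement follows.

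The first technical step I would carry out is upgrading the $L^2$ control on $\nabla v$ supplied by Lemma \ref{t4.1} to an $L^1$ control. Estimate \eqref{4.17} (together with its symmetric version on $y_1<-1$, obtained by the same argument) gives
$$\int_{(R,R+1)\times\mathbb{T}^{d-1}}|\nabla v|^2\,dy\leq C\exp(-\theta |R|)\quad\text{for }|R|\geq 1.$$
Applying Cauchy--Schwarz on each unit slab, whose $(d-1)$-dimensional cross-section has finite measure, and summing the resulting geometric series together with the bulk bound $\|\nabla v\|_{L^2(\mathbb{D}_1)}\le C$, yields $\|\nabla v\|_{L^1(\mathbb{D})}\leq C$.

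With this in hand I would split $\mathbb{D}$ into the tail regions $|\tilde y_1|>2$ and the core $|\tilde y_1|\le 2$. In the tail region, since $\operatorname{supp}(f)\cup\operatorname{supp}(\tilde g)\subset[-1,1]\times\mathbb{T}^{d-1}$, the equation \eqref{4.15} becomes the homogeneous one $\operatorname{div}(A\nabla v)=0$ on $B(\tilde y,1)$, so by the De Giorgi--Nash--Moser local $L^\infty$ estimate $|v(\tilde y)|\le C\fint_{B(\tilde y,1)}|v|$. Using $v(y)\to 0$ as $y_1\to+\infty$ (from Lemma \ref{t4.1}) I write $v(x_1,x')=-\int_{x_1}^{\infty}\partial_1 v(s,x')\,ds$, and Fubini gives
$$\fint_{B(\tilde y,1)}|v|\,dx\leq C\int_{(\tilde y_1-1,\infty)\times\mathbb{T}^{d-1}}|\partial_1 v|\,dy\leq C\|\nabla v\|_{L^1(\mathbb{D})}\leq C,$$
exactly in the spirit of Lemma \ref{t2.8}; the case $\tilde y_1<-2$ is symmetric. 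For the core $|\tilde y_1|\le 2$, the right-hand side $F=f+\tilde g$ of \eqref{4.15} lies in $L^\infty(\mathbb{D})$ thanks to Lemmas \ref{t2.7}--\ref{t2.8}, and the previous step provides an $L^\infty$ bound on $v$ on the two flat boundaries $\{y_1=\pm 3\}\times\mathbb{T}^{d-1}$ of the cylinder $\Omega=(-3,3)\times\mathbb{T}^{d-1}$. A standard global $L^\infty$ estimate (Stampacchia) for $-\operatorname{div}(A\nabla v)=\operatorname{div}(F)$ on $\Omega$ with these boundary values closes the loop.

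I expect the main obstacle to be routing the tail argument cleanly: one must exploit the exponential decay of $\nabla v$ on slabs (which itself rests on the energy argument behind \eqref{3.15}/\eqref{4.17}) to promote the Lemma \ref{t4.1} $L^2$ bound to an $L^1$ bound before the Lemma \ref{t2.8}-style integration can be applied. Once that $L^1$ bound is available, the remaining steps are standard applications of Moser and Stampacchia-type $L^\infty$ estimates for divergence-form equations with bounded measurable coefficients.
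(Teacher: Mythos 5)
Your proposal is correct and follows essentially the same route as the paper, which simply invokes the argument of Lemma \ref{t2.8} applied to $v$ from \eqref{4.14}--\eqref{4.15}: reduce to bounding $v$, upgrade the exponential slab decay of $\nabla v$ to an $L^1(\mathbb{D})$ bound, and combine the local $L^\infty$ estimate with the representation $v(x_1,x')=-\int_{x_1}^{\infty}\partial_1 v(s,x')\,ds$. Your separate Stampacchia treatment of the core $|y_1|\le 2$ is a minor presentational variant of the paper's uniform use of a local $L^\infty$ estimate with the (bounded, compactly supported) data term included.
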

\begin{proof}
The proof is identical to Lemma \ref{t2.8}, after noting \eqref{4.11}, \eqref{4.12} and \eqref{4.15}.
\end{proof}

\subsection{Effective equation}
To determine the effective equation, we first determine the strong convergence in $H^{-1}$ of the source terms in \eqref{4.1}, which is stated in the following two lemmas.

\begin{lemma}\label{t4.3}
Assume that $m$ satisfies the assumptions in \eqref{4.2}, then
there exists a $\mathbb{D}$-periodic solution $\varphi_m$ to the Poisson equation $\Delta_y \varphi_m=m-q_+{I}_{y_1>0}-q_-I_{y_1<0}$ in $\mathbb{D}$, such that $\nabla \varphi_m\in C^{0,\beta'}(\mathbb{D})$, for any $\beta'\in (0,1)$, where ${I}_{y_1>0}$ and $I_{y_1<0}$ are the characteristic functions.
\end{lemma}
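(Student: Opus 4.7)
The plan is to reduce the problem to Lemma \ref{t2.6} by subtracting explicit solutions that absorb the periodic limiting behaviour as $y_1\to\pm\infty$. Since the assumption $\fint_{\mathbb{Y}}m_\pm=1$ gives $\int_{\mathbb{Y}}q_\pm(m_\pm-1)\,dy=0$, the classical periodic Poisson problem
\begin{equation*}
\Delta_y\varphi_\pm=q_\pm(m_\pm-1)\quad\text{in }\mathbb{Y}
\end{equation*}
admits $1$-periodic solutions $\varphi_\pm$, and elliptic regularity (together with the smoothness of $m_\pm$) yields $\varphi_\pm\in C^\infty(\mathbb{Y})$.

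Next, reusing the cutoffs $\psi_\pm$ introduced in \eqref{3.24}, I would seek $\varphi_m$ in the form $\varphi_m=u+\psi_+\varphi_++\psi_-\varphi_-$, so that $u$ must solve $\Delta_y u=g$ with
\begin{equation*}
\begin{aligned}
g=\;&(m-q_+I_{y_1>0}-q_-I_{y_1<0})-\psi_+q_+(m_+-1)-\psi_-q_-(m_--1)\\
&-2\nabla\psi_+\cdot\nabla\varphi_+-2\nabla\psi_-\cdot\nabla\varphi_--\Delta\psi_+\,\varphi_+-\Delta\psi_-\,\varphi_-.
\end{aligned}
\end{equation*}
The crucial point is to verify $g\in L^1(\mathbb{D})\cap L^\infty(\mathbb{D})$. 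For $y_1>1$ one has $\psi_+=1$, $\psi_-=0$, $I_{y_1>0}=1$, and all derivatives of the cutoffs vanish, hence $g=m-q_+m_+$, which by \eqref{4.2} decays exponentially in $y_1$. The region $y_1<-1$ is handled symmetrically, yielding $g=m-q_-m_-$. On the transition strip $|y_1|\le 1$ every term of $g$ is uniformly bounded by construction; combining with the exponential tails gives $g\in L^1(\mathbb{D})\cap L^\infty(\mathbb{D})$.

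The third step is to apply Lemma \ref{t2.6} to produce a $\mathbb{D}$-periodic solution $u$ of $\Delta_y u=g$ with $\nabla u\in C^{0,\beta'}(\mathbb{D})$ for any $\beta'\in(0,1)$. Then $\varphi_m:=u+\psi_+\varphi_++\psi_-\varphi_-$ is $\mathbb{D}$-periodic (since $\psi_\pm$ depend only on $y_1$ and $\varphi_\pm$ are $1$-periodic), satisfies $\Delta_y\varphi_m=m-q_+I_{y_1>0}-q_-I_{y_1<0}$ by construction, and inherits $\nabla\varphi_m\in C^{0,\beta'}(\mathbb{D})$ since $\psi_\pm\varphi_\pm$ is smooth. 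The main obstacle to track carefully is the $L^1$ integrability of $g$, which hinges on the exponential decay $|m-q_\pm m_\pm|\le C\exp(-C|y_1|)$ built into \eqref{4.2}; once this is in hand the rest is an algebraic rearrangement closely parallel to the construction of $\phi_b$ in Lemma \ref{t3.3}.
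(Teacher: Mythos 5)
Your proposal is correct and follows essentially the same route as the paper: subtract $\psi_+\varphi_+ + \psi_-\varphi_-$ built from the periodic cell problems $\Delta\varphi_\pm = q_\pm(m_\pm-1)$ (solvable since $\fint_{\mathbb{Y}}m_\pm=1$), check that the remaining right-hand side lies in $L^1(\mathbb{D})\cap L^\infty(\mathbb{D})$ using the exponential decay of $m-q_\pm m_\pm$, and invoke Lemma \ref{t2.6}. The only quibble is that under \eqref{4.2} alone $m_\pm$ is merely bounded, so you get $\varphi_\pm\in W^{2,p}(\mathbb{Y})$ for all $p<\infty$ (hence $\nabla\varphi_\pm\in C^{0,\beta'}$) rather than $C^\infty$, which is all the argument needs.
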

\begin{proof}
It is known that there exist 1-periodic functions $\varphi_{m_\pm}\in W^{2,p}(\mathbb{Y})$, for any $1<p<\infty$, satisfying $\Delta_y \varphi_{m_\pm}=m_{\pm}-1$ in $\mathbb{Y}$ with $\fint_\mathbb{Y} \varphi_{m_\pm}=0$, due to $\fint_\mathbb{Y} m_\pm=1$. Choose cut-off functions $\psi_\pm(x_1)$, such that
\begin{equation}\label{4.18}\begin{aligned}
&\psi_+(x_1)=1,\text{ if }x_1\geq 1,\ \psi_+(x_1)=0,\text{ if }x_1\leq 0;\\
&\psi_-(x_1)=1,\text{ if }x_1\leq -1,\ \psi_-(x_1)=0,\text{ if }x_1\geq 0.
\end{aligned}\end{equation}
Then the solvability of the Poisson equation $\Delta_y \varphi_m=m-q_+{I}_{y_1>0}-q_-I_{y_1<0}$ in $\mathbb{D}$ is equivalent to
\begin{equation}\label{4.19}\begin{aligned}
&\Delta(\varphi_m-q_+\psi_+\varphi_{m_+}-q_-\psi_-\varphi_{m_-})\\
=&m-q_+{I}_{y_1>0}-q_-I_{y_1<0}-q_+\psi_+(m_+-1)-q_-\psi_-(m_--1)\\
&-q_+\Delta\psi_+m_+-2q_+\nabla\psi_+\nabla m_+-q_-\Delta\psi_-m_--2q_-\nabla\psi_-\nabla m_-\\
=&(1-\psi_+-\psi_-)m+\psi_+(m-q_+m_+)+\psi_-(m-q_-m_-)-q_+(I_{y_1>0}-\psi_+)\\
&-q_-(I_{y_1<0}-\psi_-)-q_+\Delta\psi_+m_+-2q_+\nabla\psi_+\nabla m_+-q_-\Delta\psi_-m_--2q_-\nabla\psi_-\nabla m_-\\
=&:\tilde{F}.
\end{aligned}\end{equation}

Due to \eqref{4.2} and \eqref{4.18}, it is easy to see that  $\tilde{F}\in L^1(\mathbb{D})\cap L^\infty(\mathbb{D})$, then according to Lemma \ref{t2.6}, there exists a $\mathbb{D}$-periodic solution $\varphi_m$ to the Poisson equation $\Delta_y \varphi_m=m-q_+{I}_{y_1>0}-q_-I_{y_1<0}$ in $\mathbb{D}$, such that $\nabla (\varphi_m-q_+\psi_+\varphi_{m_+}-q_-\psi_-\varphi_{m_-})\in C^{0,\beta'}(\mathbb{D})$, which implies $\nabla \varphi_m \in C^{0,\beta'}(\mathbb{D})$, for any $\beta'\in (0,1)$.
\end{proof}

To proceed, we can obtain the following strong convergence in $H^{-1}$ with the help of Lemma \ref{t4.3}.
\begin{lemma}\label{t4.4}
Assume that $m$ satisfies the conditions in \eqref{4.2}, and $f\in L^2(\Omega)$ with $\Omega$ being a bounded Lipschitz domain in $\mathbb{R}^d$ for $d\geq 2$, then $f(x)m^\varepsilon(x)\rightarrow f(x)(q_+{I}_{x_1>0}+q_-I_{x_1<0})(x)$ strongly in $H^{-1}(\Omega)$, as $\varepsilon\rightarrow 0$.
\end{lemma}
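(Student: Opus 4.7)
My plan is to exploit Lemma \ref{t4.3} to write the oscillating difference $m(y) - q_+ I_{y_1>0} - q_- I_{y_1<0}$ as a pure Laplacian $\Delta_y \varphi_m$, together with the fact that $\nabla_y \varphi_m$ is actually uniformly bounded on all of $\mathbb{D}$ (not merely $C^{0,\beta'}$). After $\varepsilon$-rescaling this identity reads
\[
m^\varepsilon(x) - q_+ I_{x_1>0}(x) - q_- I_{x_1<0}(x) = \varepsilon \operatorname{div}_x \bigl[(\nabla_y \varphi_m)^\varepsilon(x)\bigr],
\]
so that the quantity of interest is exhibited as $\varepsilon$ times the divergence of an $L^\infty$ vector field. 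This divergence structure is exactly what converts pointwise oscillations into $H^{-1}$-smallness.

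For smooth data $f \in C_c^\infty(\Omega)$ and any test function $\phi \in H^1_0(\Omega)$, the product $f\phi$ lies in $H^1_0(\Omega)$, so I would integrate by parts to obtain
\[
\int_\Omega f\phi \bigl(m^\varepsilon - q_+ I_{x_1>0} - q_- I_{x_1<0}\bigr)\, dx = -\varepsilon \int_\Omega (\nabla_y \varphi_m)^\varepsilon \cdot \nabla(f\phi)\, dx,
\]
which is bounded by $C\varepsilon(\|f\|_{L^\infty} + \|\nabla f\|_{L^2})\|\phi\|_{H^1_0(\Omega)}$. Taking the supremum over $\phi$ yields $\|f(m^\varepsilon - q_+ I_{x_1>0} - q_- I_{x_1<0})\|_{H^{-1}(\Omega)} \le C(f)\varepsilon$ for every smooth $f$.

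To reach a general $f \in L^2(\Omega)$ I would run a density argument. Pick $f_n \in C_c^\infty(\Omega)$ with $f_n \to f$ in $L^2(\Omega)$ and split
\[
\|f(m^\varepsilon - q_+ I_{x_1>0} - q_- I_{x_1<0})\|_{H^{-1}} \le \|(f-f_n)(m^\varepsilon - q_+ I_{x_1>0} - q_- I_{x_1<0})\|_{H^{-1}} + \|f_n(m^\varepsilon - q_+ I_{x_1>0} - q_- I_{x_1<0})\|_{H^{-1}}.
\]
Since $m^\varepsilon$ and $q_\pm$ are uniformly bounded and $\Omega$ is bounded, the continuous embedding $L^2(\Omega) \hookrightarrow H^{-1}(\Omega)$ bounds the first summand by $C_\Omega \|f-f_n\|_{L^2(\Omega)}$, while the second summand is $\le C(f_n)\varepsilon$ by the previous step. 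Picking $n$ large first to kill the approximation error, then $\varepsilon$ small, gives the desired strong convergence in $H^{-1}(\Omega)$.

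The main obstacle I anticipate is pinning down the uniform $L^\infty$ bound on $\nabla_y \varphi_m$ over the entire strip $\mathbb{D}$, which is not literally in the statement of Lemma \ref{t4.3}. This requires revisiting the decomposition used there: the two periodic potentials $\varphi_{m_\pm}$ have bounded gradient on the torus via standard $W^{2,p}$ theory and Sobolev embedding, while the remainder has gradient controlled through the $N_1 + \tilde{N}_2$ construction of Lemma \ref{t2.6}, whose longitudinal part $\partial_1 N_1$ is the integral of an $L^1 \cap L^\infty$ function and whose transverse part is globally Hölder continuous. Once this uniform bound is secured, the above chain of estimates finishes the proof routinely.
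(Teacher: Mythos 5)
Your proposal is correct and rests on the same two pillars as the paper's proof: the potential $\varphi_m$ from Lemma \ref{t4.3} (with the global bound $\nabla_y\varphi_m\in L^\infty(\mathbb{D})$, which indeed follows from the decomposition $q_+\psi_+\varphi_{m_+}+q_-\psi_-\varphi_{m_-}+\tilde{N}$ exactly as you indicate), and the resulting representation $m^\varepsilon-q_+I_{x_1>0}-q_-I_{x_1<0}=\varepsilon\operatorname{div}_x[(\nabla_y\varphi_m)^\varepsilon]$. The only difference is how the rough $f$ is handled — the paper mollifies $f$ at scale $\varepsilon^{1/2}$ via the smoothing operator $S_{\varepsilon^{1/2}}$ and estimates the three resulting terms, whereas you approximate $f$ by $C_c^\infty$ functions and conclude by density using $L^2(\Omega)\hookrightarrow H^{-1}(\Omega)$; both are valid for the qualitative convergence claimed.
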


\begin{proof}
We first introduce the concept of $\varepsilon$-smoothing operator $S_\varepsilon$. Fix a nonnegative function $\rho\in C_0^\infty(B(0,1/2))$ such that $\int_{\mathbb{R}^n}\rho(x) dx=1$. For $\varepsilon>0,$
define \begin{equation}\label{4.20}
S_\varepsilon(f)(x)=(\rho_\varepsilon\ast f)(x)=\int_{\mathbb{R}^n}f(x-y)\rho_\varepsilon(y)dy,\end{equation}
where $\rho_\varepsilon(y)=\varepsilon^{-d}\rho(y/\varepsilon)$. Moreover, we have the following estimates (for the proof, see \cite[Proposition 3.1.6]{shen2018periodic} and \cite[Lemma 3.1]{MR3596717} for example).
\begin{equation}\label{4.21}\begin{aligned}
&||S_{\varepsilon^{1/2}}(g)||_{L^2(\Omega)}\leq C ||g||_{L^2(\Omega)},\\
&||S_{\varepsilon^{1/2}}(g)-g||_{L^2(\Omega)}\rightarrow 0 \text{ as }\varepsilon \rightarrow 0, \text{ if }||g||_{L^2(\Omega)}\leq C,\\
&\varepsilon^{1/2}||\nabla S_{\varepsilon^{1/2}}(g)||_{L^2(\Omega)} \leq C ||g||_{L^2(\Omega)}.
\end{aligned}\end{equation}
Then, according to Lemma \ref{t4.3}, it is easy to see that
\begin{equation}\label{4.22}\begin{aligned}
&f  m^\varepsilon- f(q_+{I}_{x_1>0}+q_-I_{x_1<0})\\
=&\left(f-S_{\varepsilon^{1/2}}(f)\right)(m(x/\varepsilon)-(q_+{I}_{x_1>0}+q_-I_{x_1<0}))+\varepsilon^2
S_{\varepsilon^{1/2}}(f) \Delta_x\varphi_m^{\varepsilon}\\
=&\left(f-S_{\varepsilon^{1/2}}(f)\right)(m(x/\varepsilon)-(q_+{I}_{x_1>0}+q_-I_{x_1<0}))
+\varepsilon\operatorname{div}\left(S_{\varepsilon^{1/2}}(f)\nabla_y \varphi_m^{\varepsilon}\right)\\
&-\varepsilon \nabla  S_{\varepsilon^{1/2}}(f)\nabla_y \varphi_m^{\varepsilon}.
\end{aligned}\end{equation}
Consequently,
\begin{equation*}\begin{aligned}
&||f m^\varepsilon- f(q_+{I}_{x_1>0}+q_-I_{x_1<0})||_{H^{-1}(\Omega)}\\
\leq & C
||f-S_{\varepsilon^{1/2}}(f)||_{L^2(\Omega)}+C\varepsilon
||S_{\varepsilon^{1/2}}(f)||_{L^2(\Omega)}+C\varepsilon ||\nabla  S_{\varepsilon^{1/2}}(f)||_{L^2(\Omega)}\\
\rightarrow& \  0\quad \text{ as }\varepsilon\rightarrow 0,
\end{aligned}\end{equation*}after noting \eqref{4.21}.
Thus we complete this proof.
\end{proof}

Equipped with the correctors obtained in Lemma \ref{t4.1}, we are ready to state the following uniform $H$-convergence, which is similar to the proof of  Lemma \ref{2.3}.

\begin{lemma}\label{t4.5}
Suppose that the matrix $A$ satisfies the conditions in \eqref{4.2}. Let sequences $x_\ell\in \mathbb{R}^d$ and $\varepsilon_\ell\in \mathbb{R}_+$ satisfy $x_\ell \cdot e_1\rightarrow x_{0,1}\in \mathbb{R}$ and $\varepsilon_\ell\rightarrow 0$. Then, the sequence $A_\ell((\cdot-x_\ell)/\varepsilon_\ell)$
$H$-convergent to $\widehat{A}(\cdot-x_{0,1}e_1)$ on every Lipschipz bounded domain of $\mathbb{R}^d$.\end{lemma}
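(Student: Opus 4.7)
The strategy is to adapt the Div-Curl argument of Lemma \ref{t2.3} to the interface setting, using the correctors $\chi_j$ from Lemma \ref{t4.1} together with their $L^\infty$ bound from Lemma \ref{t4.2}. The essentially new feature compared with Lemma \ref{t2.3} is the translation by the moving base point $x_\ell$ combined with the piecewise-constant limit $\widehat A$ that jumps across $\{x_1 = x_{0,1}\}$.

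For $j = 1, \ldots, d$, I would form the oscillatory test field
\begin{equation*}
\Phi_j^\ell(x) = P_j(x - x_\ell) + \varepsilon_\ell\, \chi_j\bigl((x - x_\ell)/\varepsilon_\ell\bigr),
\end{equation*}
using the positive $1$-homogeneity of $P_j$. Equation \eqref{4.9} gives $\operatorname{div}(A((\cdot-x_\ell)/\varepsilon_\ell)\nabla \Phi_j^\ell) = 0$, while $\chi_j \in L^\infty(\mathbb{D})$ and the continuity of $P_j$ yield $\Phi_j^\ell \to P_j(x - x_{0,1}e_1)$ strongly in $L^2_{\mathrm{loc}}(\mathbb{R}^d)$. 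The decomposition introduced in the proof of Lemma \ref{t4.1},
\begin{equation*}
\chi_j(y) = \psi_+(y_1)\bigl(\chi_{>,j}(y) + \vartheta_j \chi_{>,1}(y)\bigr) + \psi_-(y_1)\chi_{<,j}(y) + v(y),
\end{equation*}
with $\nabla v \in L^2(\mathbb{D})$ decaying exponentially in $|y_1|$, is then the key tool for computing weak limits in $L^2_{\mathrm{loc}}$.

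On $\{x_1 > x_{\ell, 1}\}$ the $\psi_+$-piece reduces to the purely periodic setting of Lemma \ref{t2.3} for the matrix $A_>$, so its gradient evaluated at $(x-x_\ell)/\varepsilon_\ell$ weakly converges to the constant $\vartheta_j e_1$ and the corresponding flux converges weakly to $\widehat{A_>}\nabla P_j = q_+\widehat{A_+}\nabla P_j$ (using Lemma \ref{t2.2}); the analogous statement on $\{x_1 < x_{\ell, 1}\}$ handles the $\psi_-$-piece and yields $q_-\widehat{A_-}\nabla P_j$. The $v$-contribution is negligible: a change of variables and periodicity in $y'$ give
\begin{equation*}
\bigl\|\nabla v\bigl((\cdot - x_\ell)/\varepsilon_\ell\bigr)\bigr\|_{L^2(\Omega)}^2 \le C \varepsilon_\ell \, \|\nabla v\|_{L^2(\mathbb{D})}^2,
\end{equation*}
so it vanishes strongly in $L^2(\Omega)$. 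Since $x_\ell\cdot e_1 \to x_{0,1}$ implies $\mathbf{1}_{x_1\gtrless x_{\ell,1}} \to \mathbf{1}_{x_1\gtrless x_{0,1}}$ a.e., collecting these three contributions gives $\nabla \Phi_j^\ell \rightharpoonup \nabla P_j(\cdot - x_{0,1}e_1)$ and $A((\cdot-x_\ell)/\varepsilon_\ell)\nabla\Phi_j^\ell \rightharpoonup \widehat A(\cdot-x_{0,1}e_1)\nabla P_j(\cdot-x_{0,1}e_1)$ weakly in $L^2_{\mathrm{loc}}$, with $\widehat A$ exactly as in \eqref{4.3}.

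To close, I would pick any $u_\ell \rightharpoonup u$ in $H^1(\Omega)$ solving $\operatorname{div}(A((\cdot - x_\ell)/\varepsilon_\ell)\nabla u_\ell) = F_\ell \to F$ in $H^{-1}(\Omega)$, extract a subsequential weak limit $\sigma$ of $\sigma_\ell := A((\cdot - x_\ell)/\varepsilon_\ell)\nabla u_\ell$, and apply the Div-Curl lemma \cite[Theorem 2.3.1]{shen2018periodic} to the pairs $(\sigma_\ell, \nabla \Phi_j^\ell)$ and $(\nabla u_\ell, A^\ast((\cdot-x_\ell)/\varepsilon_\ell)\nabla \Phi_j^{\ast,\ell})$, the latter built from the corrector of the adjoint operator to cover the non-symmetric case. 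Ranging $j = 1, \ldots, d$ identifies $\sigma = \widehat A(\cdot - x_{0,1}e_1)\nabla u$, as desired. The main technical difficulty is the control of the interface remainder $v$ and the slippage of the interface $\{x_1 = x_{\ell,1}\} \to \{x_1 = x_{0,1}\}$; both are quantitative issues resolved by the $L^2$ exponential decay of $\nabla v$ supplied by Lemma \ref{t4.1} and by the hypothesis $x_\ell \cdot e_1 \to x_{0,1}$, respectively, so the proof is otherwise parallel to Lemma \ref{t2.3}.
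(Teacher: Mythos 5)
Your proposal is correct and follows essentially the same route as the paper: the paper's own proof is precisely a sketch of Tartar's oscillating-test-function/Div-Curl argument, resting on the weak convergences $\nabla\chi_k((\cdot-x_\ell)/\varepsilon_\ell)\rightharpoonup 0$ and $(A(\nabla P_k+\nabla\chi_k))((\cdot-x_\ell)/\varepsilon_\ell)\rightharpoonup\widehat{A}\nabla P_k$, which it derives exactly as you do from the decomposition of $\chi_k$ into the $\psi_\pm$-weighted periodic correctors plus a remainder with exponentially decaying gradient (i.e.\ from \eqref{4.12} together with \eqref{2.24}--\eqref{2.25}). Your write-up in fact supplies more detail than the paper (e.g.\ the adjoint correctors and the interface slippage), with only a harmless slip where you describe the weak limit of the gradient of the $\psi_+$-piece as $\vartheta_j e_1$ rather than $0$ (the $\vartheta_j e_1$ coming from $\nabla P_j$ itself).
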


\begin{proof}
The proof is classical and relies on the Div-Curl Lemma \cite[Theorem 2.3.1]{shen2018periodic}. Therefore, we only emphasize on its main ingredient: the matrix $A$ admits correctors $\chi_k$ such that
\begin{equation}\label{4.23}\nabla \chi_k\in L^2_{\text{unif}}(\mathbb{R}^d,\mathbb{R}^d),\end{equation}
and, for any bounded Lipschitz domain $\Omega$, that satisfy the following weak convergence in $L^2(\Omega,\mathbb{R}^d)$,
\begin{equation}\label{4.24}\begin{aligned}
\nabla_y \chi_k((\cdot-x_\ell)/\varepsilon_\ell)\rightharpoonup 0 \text{ weakly in }L^2(\Omega,\mathbb{R}^d),
\end{aligned}\end{equation}
\begin{equation}\label{4.25}\begin{aligned}
(A\cdot(\nabla  P_k+\nabla \chi_k))((\cdot-x_\ell)/\varepsilon_\ell)
\rightharpoonup \widehat{A}\nabla P_k\text{ weakly in }L^2(\Omega,\mathbb{R}^d).
\end{aligned}\end{equation}
The above facts are consequences of \eqref{4.12}, using the properties \eqref{2.24} and \eqref{2.25} of the $\mathbb{D}$-periodic correctors $\chi_<$ and $\chi_>$. Moreover, see \eqref{2.24} and \eqref{2.25} for the similar proof of \eqref{4.24} and \eqref{4.25}, respectively.
\end{proof}

\begin{rmk}\label{t4.6}
Actually, it is easy to see that Theorem \ref{t1.1} follows readily from Lemma \ref{t4.4} and Lemma \ref{t4.5}.\end{rmk}

\subsection{Convergence rates}
In order to obtain the convergence rates, we first introduce some useful estimates obtained in \cite{MR3974127}.

Denote \begin{equation}\label{4.26}U_0(x)=:(\nabla P(x))^{-1}\cdot \nabla u_0.\end{equation}

By the transmission conditions \eqref{4.8} through the interface, the function $U_{0,j}$ is continuous across the interface $\mathcal{I}$, if $f$ is sufficiently regular, then $\widehat{A}\nabla u_0=A\nabla P \cdot (\nabla P(x))^{-1}\cdot \nabla u_0$ is regular, which implies $U_{0,j}=(\nabla P(x))^{-1}\cdot \nabla u_0$ is continuous across the interface $\mathcal{I}$.


 Recall that in the periodic case, we need $||u_0||_{H^2}$ to dominate the $L^2$ convergence rates of $u_\varepsilon-u_0$. But in our setting, due to $u_0\notin H^2$, we need to find a suitable adaption of $u_0$, such that this suitable adaption can dominate the $L^2$ convergence rates of $u_\varepsilon-u_0$. The following result states that $U_0$ is a suitable choice.

\begin{lemma}\label{t4.7}
Let $d\geq 3$, $x_0\in \mathbb{R}^d$, and $\widehat{A}$ be a matrix defined in \eqref{1.13} (or \eqref{4.3}). Suppose that $f\in L^{2d/(d+4)}(\mathbb{R}^d)\cap L^p(\mathbb{R}^d)$ for some $p\in(d,\infty)$. Let $u_0\in\dot{H}^1(\mathbb{R}^d)$ be a weak solution to $\mathcal{L}_0u_0=f$ in $\mathbb{R}^d$ and define $U_0$ by \eqref{4.26}. Then there exists a constant $C>0$, depending only on $d$, $\widehat{A_>}$ and $\widehat{A_<}$ such that
\begin{equation}\label{4.27}
||U_0||_{H^{1}(\mathbb{R}^d)}\leq C||f||_{L^{2d/(d+4)}(\mathbb{R}^d)\cap L^2(\mathbb{R}^d)}.
\end{equation}

Moreover,
\begin{equation}\label{4.28}
||U_0||_{W^{1,p}(\mathbb{R}^d)}\leq C||f||_{L^{2d/(d+4)}(\mathbb{R}^d)\cap L^p(\mathbb{R}^d)}.
\end{equation}

\end{lemma}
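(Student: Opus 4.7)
The plan is to exploit two structural features of the problem: (a) the effective coefficient $\widehat{A}$ is piecewise constant, equal to $q_+\widehat{A_+}$ on $\{x_1>0\}$ and $q_-\widehat{A_-}$ on $\{x_1<0\}$; (b) by construction and in view of the transmission conditions \eqref{4.8} together with the piecewise constancy of $\nabla P$ recorded in \eqref{4.7}, the vector field $U_0=(\nabla P)^{-1}\nabla u_0$ is continuous across $\mathcal{I}$. In each half-space $u_0$ therefore solves a constant-coefficient elliptic equation, so classical Calder\'on--Zygmund theory yields second-order regularity there; the continuity of $U_0$ across $\mathcal{I}$ prevents any singular contribution on the interface, so the half-space estimates glue into a global bound on $\nabla U_0$.

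For the $L^2$ part of \eqref{4.27}, I would first apply the standard energy estimate to $\mathcal{L}_0u_0=f$: testing with $u_0$, together with ellipticity of $\widehat{A}$, H\"older's inequality and the Sobolev embedding $\|u_0\|_{L^{2d/(d-2)}}\leq C\|\nabla u_0\|_{L^2}$ (valid since $d\geq 3$), yields $\|\nabla u_0\|_{L^2(\mathbb{R}^d)}\leq C\|f\|_{L^{2d/(d+2)}(\mathbb{R}^d)}$. Since the relation $(d+2)/(2d)=\tfrac12\cdot(d+4)/(2d)+\tfrac12\cdot 1/2$ holds, the $L^p$-interpolation inequality gives $\|f\|_{L^{2d/(d+2)}}\leq\|f\|_{L^{2d/(d+4)}}^{1/2}\|f\|_{L^2}^{1/2}\leq C\|f\|_{L^{2d/(d+4)}\cap L^2}$. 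Because $(\nabla P)^{-1}$ is bounded (with norm depending only on $\vartheta$, hence only on $\widehat{A_\pm}$), the pointwise estimate $|U_0|\leq C|\nabla u_0|$ delivers the desired $L^2$ bound on $U_0$.

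The main obstacle is the $L^2$ bound on $\nabla U_0$, which I plan to extract from second-order regularity for the transmission problem. In each half-space $u_0$ satisfies the constant-coefficient equation $q_\pm(\widehat{A_\pm})_{ij}\partial_{ij}u_0=f$, with the transmission conditions \eqref{4.8} on $\mathcal{I}$. A Calder\'on--Zygmund estimate for this transmission problem---for instance via partial Fourier transform in $x'$, which reduces the problem to a one-dimensional two-point transmission ODE admitting an explicit Green's function with exponential decay in $|\xi'||x_1|$---produces $\|D^2 u_0\|_{L^2(\{x_1>0\})}+\|D^2 u_0\|_{L^2(\{x_1<0\})}\leq C\|f\|_{L^2(\mathbb{R}^d)}$. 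In each half-space one has $U_0=M_\pm\nabla u_0$ for a constant matrix $M_\pm$, so $\nabla U_0=M_\pm D^2 u_0$ there; the continuity of $U_0$ across $\mathcal{I}$ rules out a surface delta contribution, yielding $\|\nabla U_0\|_{L^2(\mathbb{R}^d)}\leq C\|f\|_{L^2}$. Combined with the previous step this proves \eqref{4.27}.

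For \eqref{4.28}, the same strategy with $L^p$ Calder\'on--Zygmund estimates for the transmission problem gives $\|\nabla U_0\|_{L^p(\mathbb{R}^d)}\leq C\|f\|_{L^p(\mathbb{R}^d)}$. To recover $\|U_0\|_{L^p}$ it suffices to combine the $L^2$ bound just proved with the fact that $\nabla U_0\in L^p(\mathbb{R}^d)$ for $p>d$: the Gagliardo--Nirenberg inequality yields $\|U_0\|_{L^\infty(\mathbb{R}^d)}\leq C\|U_0\|_{L^2}^{1-\theta}\|\nabla U_0\|_{L^p}^{\theta}$ for the scaling-admissible $\theta\in(0,1)$, after which the elementary interpolation $\|U_0\|_{L^p}\leq\|U_0\|_{L^2}^{2/p}\|U_0\|_{L^\infty}^{1-2/p}$ closes the argument. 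The delicate point throughout is the $L^p$ transmission-type Calder\'on--Zygmund estimate for a discontinuous (but piecewise constant and aligned with the interface) coefficient $\widehat{A}$; this was carried out via the layered Green's function construction in \cite{MR3974127}, whose output is imported directly.
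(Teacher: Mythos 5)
Your proposal is correct and rests on the same structural insight as the paper (piecewise-constant $\widehat{A}$, continuity of $U_0=(\nabla P)^{-1}\nabla u_0$ across $\mathcal{I}$ by the transmission conditions \eqref{4.8}, reduction to Calder\'on--Zygmund theory), but the execution differs in two places. For the key second-order estimate, the paper does not work with the transmission problem in the original variables: it sets $\tilde u_0=u_0\circ P^{-1}$, observes that $|J|^{-1}(\nabla P)^{T}\widehat{A}\,\nabla P$ is divergence-free so that the transformed equation can be rewritten in non-divergence form $\tilde A_{ij}\partial_{ij}\tilde u=f\circ P^{-1}$ with coefficients constant on each half-space, and then invokes Kim's $W^{2,p}$ estimate for non-divergence equations with piecewise continuous coefficients \cite[Lemma 2.4]{MR2276531}; the identity $\partial_{x_j}\tilde u=U_{0,j}\circ P^{-1}$ then converts $\|\tilde u\|_{W^{2,p}}$ into $\|U_0\|_{W^{1,p}}$ in one stroke, so no separate gluing across $\mathcal{I}$ is needed. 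Your route---half-space estimates for the transmission problem plus matching traces of $U_0$ to rule out an interface singular part---is equivalent, but the partial-Fourier/layered-Green-function argument you sketch is exactly the content you would otherwise have to import (from \cite{MR2276531} or \cite{MR3974127}), and for $L^p$ with $p\neq2$ it requires a multiplier or kernel estimate that your sketch only gestures at; citing Kim as the paper does is the cleaner way to discharge it. For the lower-order norms the two arguments also diverge: the paper bounds $\|u_0\|_{L^2}$ and $\|u_0\|_{L^p}$ directly via the Green-function bound $|\mathcal{G}_0(x,y)|\leq C|x-y|^{2-d}$ and Hardy--Littlewood--Sobolev (this is where the exponent $2d/(d+4)$ enters) and then reads off $\|U_0\|_{L^p}$ from Kim's estimate $\|\tilde u\|_{W^{2,p}}\leq C\|\tilde u\|_{L^p}+C\|f\|_{L^p}$, whereas you use the energy estimate plus interpolation for $\|U_0\|_{L^2}$ and Gagliardo--Nirenberg plus $L^2$--$L^\infty$ interpolation for $\|U_0\|_{L^p}$; both computations check out (your interpolation exponents are consistent), and your version has the mild advantage of not needing pointwise Green-function bounds for $\mathcal{L}_0$.
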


\begin{proof}
This proof is almost identical to \cite[Lemma 5.2]{MR3974127}, and we provide it for completeness. We first show an $L^2$ estimate on $u_0$. By definition, there holds:
$$u_0(x)=\int_{\mathbb{R}^d}\mathcal{G}_0(x,y)f(y)dy,$$
where  $\mathcal{G}_0$ is the Green function associated with the operator $\operatorname{div}(\widehat{A}\cdot \nabla)$ such that $|\mathcal{G}_0(x,y)|\leq C |x-y|^{2-d}$. Then applying the Hardy-Littlewood-Sobolev inequality yields that
%

\begin{equation}\label{4.29}
||u_0||_{L^2(\mathbb{R}^d)}\leq C ||f||_{L^{2d/(d+4)}(\mathbb{R}^d)}.
\end{equation}

To proceed, the function $\tilde{u}_0(x)=:u_0(P^{-1}(x))$ satisfies the following elliptic equation:

\begin{equation}\label{4.30}-\operatorname{div}\left(|J(x)|^{-1} \tilde{A}(x) \cdot \nabla \tilde{u}(x)\right)=|J(x)|^{-1} f\left(P^{-1}(x)\right)\quad\text{in}\quad \mathbb{R}^d,\end{equation}
where $\tilde{A}(x)$ is defined by
$$\tilde{A}(x):=\left(\nabla P\left(P^{-1}(x)\right)\right)^{T} \cdot \widehat{A}\left(P^{-1}(x)\right) \cdot \nabla P\left(P^{-1}(x)\right),$$
and $J(x)$ is the Jacobian of $P$ evaluated on $P^{-1}(x)$. By construction, $\tilde{A}(x)$ is elliptic and constant on the half-spaces $\mathbb{R}_\pm^*\times \mathbb{R}^{d-1}$, and the product $|J(x)|^{-1} \tilde{A}(x)$ is divergence-free in $\mathbb{R}^d$ due to \eqref{4.6}. Therefore, we can rewrite \eqref{4.30} as
$$\tilde{A}_{ij}\partial_{ij}\tilde{u}(x)=f(P^{-1}(x)).$$

Moreover, it follows from that \cite[Lemma 2.4]{MR2276531}, there exists a constant $C$ such that
\begin{equation}\label{4.31}
||\tilde{u}||_{H^2(\mathbb{R}^d)}\leq C ||\tilde{u}||_{L^2(\mathbb{R}^d)}+C||f||_{L^2(\mathbb{R}^d)}\leq
C||f||_{L^{2d/(d+4)}(\mathbb{R}^d)\cap L^2(\mathbb{R}^d)}.
\end{equation}
Therefore, a simple change of variable yields the desired estimate \eqref{4.27} after noting that $\partial_{x_j}\tilde{u}(x)=U_{0,j}(P^{-1}(x))$.

Similarly, applying the Hardy-Littlewood-Sobolev inequality also yields that
\begin{equation}\label{4.32}
||u_0||_{L^p(\mathbb{R}^d)}\leq C ||f||_{L^{pd/(d+2p)}(\mathbb{R}^d)},
\end{equation}
and applying \cite[Lemma 2.4]{MR2276531} again, we have
\begin{equation}\label{4.33}\begin{aligned}
||\tilde{u}||_{W^{2,p}(\mathbb{R}^d)}\leq& C ||\tilde{u}||_{L^p(\mathbb{R}^d)}+C||f||_{L^p(\mathbb{R}^d)}\\
\leq&C||f||_{L^{pd/(d+2p)}(\mathbb{R}^d)\cap L^p(\mathbb{R}^d)}\\
\leq& C ||f||_{L^{2d/(d+4)}(\mathbb{R}^d)\cap L^p(\mathbb{R}^d)},
\end{aligned}\end{equation}
where we have used $p>pd/(d+2p)>2d/(d+4)$ due to $p>d\geq 3$. Consequently, a simple change of variable yields the desired estimate \eqref{4.28} after noting that $\partial_{x_j}\tilde{u}(x)=U_{0,j}(P^{-1}(x))$.
\end{proof}

\noindent In the following lemma, we can define the so-called flux corrector $\phi$ associated with $\mathcal{L}$.

\begin{lemma}[Flux corrector]\label{t4.8}
 Under the condition \eqref{4.2}, denote the $\mathbb{D}$-periodic function
\begin{equation}\label{4.34}B_{ij}=:\widehat{A}_{ik}\partial_k P_j-A_{ik}(\partial_k P_j+\partial_k\chi_j),\end{equation}
for $i,j=1,\cdots,d$, then there exist the so-called flux corrector $\phi$, such that
\begin{equation*}\begin{aligned}
B_{ij}=\partial_{y_k}\phi_{kij}\text{ in }\mathbb{D},\ \phi_{kij}=-\phi_{ikj},\ \phi_{kij}\in L^\infty(\mathbb{D}),
\end{aligned}\end{equation*}
for $k,i,j=1,\cdots,d$.
\end{lemma}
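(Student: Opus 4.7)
The structure of Lemma \ref{t4.8} is formally identical to Lemma \ref{t2.7} (periodic case) and to Lemma \ref{t3.3} (interface case for the drift), so the plan is to first build a $\mathbb{D}$-periodic potential $N_{ij}$ solving the Poisson equation $\Delta_y N_{ij}=B_{ij}$ in $\mathbb{D}$ with $\nabla_y N_{ij}\in L^\infty(\mathbb{D})$, and then set $\phi_{kij}:=\partial_{y_k}N_{ij}-\partial_{y_i}N_{kj}$. Antisymmetry in $(k,i)$ is then immediate, and $\phi_{kij}\in L^\infty(\mathbb{D})$ follows directly from $\nabla_y N_{ij}\in L^\infty(\mathbb{D})$.

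The first thing to check is the asymptotic behaviour of $B$ at $y_1\to\pm\infty$. Using \eqref{4.2}, \eqref{4.7}, and the exponential decay of $\nabla(\chi_j-\chi_{>,j}-\vartheta_j\chi_{>,1})$ and $\nabla(\chi_j-\chi_{<,j})$ from Lemma \ref{t4.1}, a direct expansion of \eqref{4.34} shows that
\[
B_{ij}(y)\xrightarrow[\text{exp.}]{} q_+\bigl(B_{>,ij}(y)+\vartheta_j B_{>,i1}(y)\bigr)\text{ as }y_1\to+\infty,\qquad B_{ij}(y)\xrightarrow[\text{exp.}]{} q_-B_{<,ij}(y)\text{ as }y_1\to-\infty,
\]
where $B_{>,ij}$ and $B_{<,ij}$ are the quantities associated with $A_>$ and $A_<$ via \eqref{2.36}. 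Lemma \ref{t2.7} (applied to both $A_>$ and $A_<$) provides $\mathbb{D}$-periodic potentials $N_{>,ij},N_{<,ij}$ with $\Delta_y N_{>,ij}=B_{>,ij}$, $\Delta_y N_{<,ij}=B_{<,ij}$ and $\nabla_y N_{>,ij},\nabla_y N_{<,ij}\in L^\infty(\mathbb{D})$.

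Following the template of Lemma \ref{t3.3}, I would choose cut-offs $\psi_\pm(y_1)$ as in \eqref{3.24} (with the mean-zero fine-tuning of \eqref{3.27}--\eqref{3.28}) and write
\[
N_{ij}=q_+\psi_+(N_{>,ij}+\vartheta_j N_{>,i1})+q_-\psi_-N_{<,ij}+\tilde{N}_{ij}.
\]
Then $\Delta_y\tilde{N}_{ij}$ splits as $\tilde{G}_1+\tilde{G}_2+\tilde{G}_3$, where $\tilde{G}_1$ collects the boundary terms $(1-\psi_+-\psi_-)B_{ij}$ and all contributions coming from $\nabla\psi_\pm$, $\Delta\psi_\pm$ (bounded and supported in $y_1\in[-1,1]$), while $\tilde{G}_2=\psi_+[B_{ij}-q_+(B_{>,ij}+\vartheta_j B_{>,i1})]$ and $\tilde{G}_3=\psi_-[B_{ij}-q_-B_{<,ij}]$ decay exponentially and are supported in the half-spaces $\{y_1\ge 0\}$ and $\{y_1\le 0\}$ respectively. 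Applying Lemma \ref{t3.2} to $\tilde{G}_1$ and Lemma \ref{t3.1} to $\tilde{G}_2$ and $\tilde{G}_3$ (this is where the mean-zero choice of $\psi_\pm$ is essential) yields $\tilde{N}_{ij}$ with $\nabla_y\tilde{N}_{ij}\in L^\infty(\mathbb{D})$, and hence $\nabla_y N_{ij}\in L^\infty(\mathbb{D})$.

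Finally I would verify the divergence identity: $\partial_{y_k}\phi_{kij}=\Delta_y N_{ij}-\partial_{y_i}(\partial_{y_k}N_{kj})=B_{ij}-\partial_{y_i}(\partial_{y_k}N_{kj})$. The function $\partial_{y_k}N_{kj}$ is harmonic on $\mathbb{R}^d$ because $\Delta_y(\partial_{y_k}N_{kj})=\partial_{y_k}B_{kj}=0$ (using $\partial_i B_{ij}=0$ in $\mathbb{D}$, which follows from \eqref{4.6} and \eqref{4.9}), is $y'$-periodic, and is bounded by Step 3; Liouville's theorem therefore forces it to be constant, killing the unwanted term. The main obstacle I expect is Step 3, namely producing $\tilde{N}_{ij}$ with a \emph{globally} (not merely locally) bounded gradient: this is exactly the delicate point already faced in Lemma \ref{t3.3}, requiring the mean-zero adjustment of $\psi_\pm$ so that Lemma \ref{t3.1} can be invoked on the half-space parts, together with the exponential decay estimates of Lemma \ref{t4.1} and Lemma \ref{t2.5} to control the remainders.
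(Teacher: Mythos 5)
Your proposal is correct and shares the paper's skeleton: introduce a potential $N_{ij}$ with $\Delta_y N_{ij}=B_{ij}$, glue the one-sided potentials via cut-offs in the form $N=\psi_+N_>\cdot\nabla P+\psi_-N_<\cdot\nabla P+\tilde N$ (your ansatz $q_+\psi_+(N_{>,ij}+\vartheta_jN_{>,i1})+\cdots$ is the same object once one notes $N_{>,ik}\partial_kP_j=N_{>,ij}+\vartheta_jN_{>,i1}$ on $\operatorname{supp}\psi_+$, up to where the factor $q_\pm$ is absorbed into $A_\gtrless$), use \eqref{4.12} to control the remainder, kill $\partial_{y_i}N_{ij}$ by Liouville, and antisymmetrize. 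The one genuine difference is the middle step: the paper observes that the right-hand side of \eqref{4.36}--\eqref{4.37} is merely $\mathbb{D}$-periodic and in $L^1(\mathbb{D})\cap L^\infty(\mathbb{D})$ and invokes Lemma \ref{t2.6}, which needs no vanishing-mean hypothesis and already yields $\nabla\tilde N\in C^{0,\beta'}(\mathbb{D})\subset L^\infty(\mathbb{D})$; you instead route the exponentially decaying half-space pieces through Lemma \ref{t3.1} and the compactly supported piece through Lemma \ref{t3.2}, which forces you to fine-tune $\psi_\pm$ as in \eqref{3.27}--\eqref{3.28} so that the mean-zero hypothesis of Lemma \ref{t3.1} holds (and, strictly, to handle the degenerate case where the tuning is impossible, as Lemma \ref{t3.3} does). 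Your route buys exponential decay of $\nabla\tilde N$, which is stronger than the $L^\infty$ bound the lemma asserts, at the cost of this extra normalization; the paper's route is simpler and sufficient. Both are valid, and your identification of the asymptotics $B_{ij}\to B_{>,ij}+\vartheta_jB_{>,i1}$ (resp.\ $B_{<,ij}$) and of the Liouville step for $\partial_{y_k}N_{kj}$ matches the paper exactly.
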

\begin{proof}
We consider the Poisson equation $\Delta N_{ij}=B_{ij}$ in $\mathbb{D}$. It is known that in Lemma \ref{t2.7} that
\begin{equation}\label{4.35}\begin{aligned}\Delta N_{>,ij}=\widehat{A}_{>,ij}-A_{>,ij}+A_{>,ik}\partial_k\chi_{>,j},\\
\Delta N_{<,ij}=\widehat{A}_{<,ij}-A_{<,ij}+A_{<,ik}\partial_k\chi_{<,j}.
\end{aligned}\end{equation}

Then, we proceed in the same manner as in the proof of Lemma \ref{t4.3} by using techniques in \cite{MR3421758}. We decompose

\begin{equation*}N=\psi_+N_>\cdot \nabla P+\psi_-N_<\cdot \nabla P+\tilde{N},\end{equation*}
with the same $\psi_{\pm}$ defined in \eqref{4.18}.
Recall that $\nabla P$ is piecewise constant and possibly discontinuous only across the interface, where $\psi_\pm$ vanishes. Hence, by definition
\begin{equation}\label{4.36}
\begin{aligned}
\Delta \tilde{N}=& \Delta N-\psi_{+} \Delta N_{>} \cdot \nabla P-\psi_{-} \Delta N_{<} \cdot \nabla P \\
&-2\left(\nabla \psi_{+} \cdot \nabla N_{>} \cdot \nabla P+\nabla \psi_{-} \cdot \nabla N_{<} \cdot \nabla P\right) \\
&-\Delta \psi_{+} N_{>} \cdot \nabla P-\Delta \psi_{-} N_{<} \cdot \nabla P .
\end{aligned}
\end{equation}

Using \eqref{4.34} and \eqref{4.35} yields that

\begin{equation}\label{4.37}
\begin{aligned}
&\Delta  N_{i j}-\psi_{+} \Delta\left(\left(N_{>}\right)_{i k}\right) \partial_{k} P_{j}-\psi_{-} \Delta\left(\left(N_{<}\right)_{i k}\right) \partial_{k} P_{j} \\
=&\left(1-\psi_{+}-\psi_{-}\right)\left(\widehat{A}_{i k} \partial_{k} P_{j}-A_{i k}\left(\partial_{k} P_{j}+\partial_{k} \chi_{j}\right)\right) \\
&+\psi_{+} A_{i k}\left(\partial_{k} \chi_{>,\ell} \partial_{\ell} P_{j}-\partial_{k} \chi_{j}\right)+\psi_{-} A_{i k}\left(\partial_{k} \chi_{<,\ell} \partial_{\ell} P_{j}-\partial_{k} \chi_{j}\right).
\end{aligned}
\end{equation}

According to \eqref{4.12}, the right-hand term of \eqref{4.37} is $\mathbb{D}$-periodic and in $L^1(\mathbb{D})\cap L^\infty(\mathbb{D})$, so right-hand term of \eqref{4.36} is $\mathbb{D}$-periodic and in $L^1(\mathbb{D})\cap L^\infty(\mathbb{D})$, Then it follows from Lemma \ref{t2.6}, that there exists a solution $N_{ij}$ to the Poisson equation $\Delta N_{ij}=B_{ij}=\widehat{A}_{ik}\partial_k P_j-A_{ik}(\partial_k P_j+\partial_k\chi_j)$ in $\mathbb{D}$, such that $\nabla N\in L^\infty(\mathbb{D})$, after in view of Lemma \ref{t2.7}.

Moreover, due to \eqref{4.6} and \eqref{4.9}, $\Delta_y\partial_{y_i}N_{ij}=0$ in $\mathbb{D}$. Since $N_{ij}$ is $\mathbb{D}$-periodic, then $\partial_{y_i}N_{ij}$ is a bounded harmonic function in $\mathbb{R}^d$ and thus $\partial_{y_i}N_{ij}$ is a constant. Let
\begin{equation*}
\phi_{kij}=:\partial_{y_k}N_{ij}-\partial_{y_i}N_{kj}.
\end{equation*}
then
\begin{equation*}
\partial_{y_k}\phi_{kij}=\Delta_yN_{ij}-\partial_{y_i}\partial_{y_k}N_{kj}=B_{ij}\text{ in }\mathbb{D},
\end{equation*}
and \begin{equation*}
\phi_{kij}\in L^\infty(\mathbb{D}).
\end{equation*}
Consequently, we complete this proof.
\end{proof}

After obtaining the flux corrector $\phi$, we can state the following convergence rates.

\begin{thm}[Convergence rates I]\label{t4.9}
 Under the condition \eqref{4.2}, let $f\in L^{2d/(d+2)}(\mathbb{R}^d)\cap L^2(\mathbb{R}^d)$ as well as $\nabla f\in L^{2d/(d+2)}(\mathbb{R}^d)$ with $d\geq 3$, and denote
\begin{equation}\label{4.38}
w_\varepsilon=u_\varepsilon-u_0-\varepsilon\chi_j^\varepsilon S_\varepsilon(U_{0,j}),\end{equation}
with $S_\varepsilon$ defined by \eqref{4.20}, $u_\varepsilon$ defined by \eqref{4.1}, $u_0$ defined as
\begin{equation}\label{4.39}\left\{\begin{aligned}
{\mathcal{L}}_0 u_0= &f(x)(q_+{I}_{x_1>0}+q_-I_{x_1<0})(x)\quad \text{ in }\mathbb{R}^d,\\
&u_0\in \dot{H}^1(\mathbb{R}^d),
\end{aligned}\right.\end{equation} and $U_0$ defined by \eqref{4.26}.
Then there holds the following convergence rates:
\begin{equation}\label{4.40}|| w_\varepsilon||_{\dot{H}^1(\mathbb{R}^d)}\leq C\varepsilon\left(||f||_{L^{2d/(d+4)}(\mathbb{R}^d)\cap L^2(\mathbb{R}^d)}+||\nabla f||_{L^{2d/(d+2)}(\mathbb{R}^d)}\right),\end{equation}
where $C$ depends only on $d$, $\lambda$ and $\Lambda$.
\end{thm}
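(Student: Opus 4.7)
The plan is to follow the standard two-scale expansion argument, adapted to handle (i) the oscillating source $fm^\varepsilon$ and (ii) the lack of $H^2$-regularity of $u_0$ across the interface. Write $\tilde U_\ell=S_\varepsilon(U_{0,\ell})$. Using $\mathcal{L}_\varepsilon u_\varepsilon=fm^\varepsilon$, $\mathcal{L}_0 u_0=f(q_+ I_{x_1>0}+q_- I_{x_1<0})$, the corrector equation \eqref{4.9}, the flux-corrector identity $A_{ij}(\partial_j P_\ell+\partial_{y_j}\chi_\ell)=\widehat{A}_{ij}\partial_j P_\ell-\partial_{y_m}\phi_{mi\ell}$ from Lemma \ref{t4.8}, the substitution $\partial_j u_0=\partial_j P_\ell\,U_{0,\ell}$ (legitimate by the definition \eqref{4.26} since $\nabla P$ is invertible on each half-space), and the antisymmetry $\phi_{mi\ell}=-\phi_{im\ell}$ to eliminate the $\partial_i\partial_m\tilde U_\ell$ term that arises, I would derive the representation
\begin{equation*}
\partial_i(A^\varepsilon_{ij}\partial_j w_\varepsilon)=\bigl[fm^\varepsilon-f(q_+ I_{x_1>0}+q_- I_{x_1<0})\bigr]-\partial_i\bigl[(A^\varepsilon_{ij}-\widehat{A}_{ij})\partial_j P_\ell(U_{0,\ell}-\tilde U_\ell)\bigr]+\varepsilon\partial_m[\phi^\varepsilon_{mi\ell}\partial_i\tilde U_\ell]-\varepsilon\partial_i[A^\varepsilon_{ij}\chi^\varepsilon_\ell\partial_j\tilde U_\ell].
\end{equation*}

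Next I would test this equation against $w_\varepsilon\in\dot{H}^1(\mathbb{R}^d)$ and use the ellipticity of $A^\varepsilon$ to place $\lambda\|\nabla w_\varepsilon\|_{L^2}^2$ on the left. On the right, the $(A^\varepsilon-\widehat{A})\partial P(U_0-\tilde U)$ flux has $L^2$ norm at most $C\|U_0-\tilde U\|_{L^2}\leq C\varepsilon\|\nabla U_0\|_{L^2}$ via the standard smoothing estimate, while the $\varepsilon\phi^\varepsilon\nabla\tilde U$ and $\varepsilon\chi^\varepsilon A^\varepsilon\nabla\tilde U$ fluxes are bounded in $L^2$ by $C\varepsilon\|\nabla\tilde U\|_{L^2}\leq C\varepsilon\|\nabla U_0\|_{L^2}$ thanks to $\phi\in L^\infty(\mathbb{D})$ (Lemma \ref{t4.8}) and $\chi\in L^\infty(\mathbb{D})$ (Lemma \ref{t4.2}). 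All three divergence-form contributions are therefore bounded in absolute value by $C\varepsilon\|\nabla U_0\|_{L^2}\|\nabla w_\varepsilon\|_{L^2}$, and Lemma \ref{t4.7} supplies $\|\nabla U_0\|_{L^2}\leq C\|f\|_{L^{2d/(d+4)}\cap L^2}$.

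The genuinely new contribution, which I expect to be the main obstacle, is the source term $fm^\varepsilon-f(q_+ I_{x_1>0}+q_- I_{x_1<0})$: Lemma \ref{t4.4} only provides qualitative convergence in $H^{-1}$, so one must genuinely exploit the Poisson potential. Using $\Delta_y\varphi_m=m-q_+ I_{y_1>0}-q_- I_{y_1<0}$ from Lemma \ref{t4.3}, I decompose
\begin{equation*}
fm^\varepsilon-f(q_+ I_{x_1>0}+q_- I_{x_1<0})=(f-S_\varepsilon f)\bigl(m^\varepsilon-(q_+I_{x_1>0}+q_- I_{x_1<0})\bigr)+\varepsilon\operatorname{div}\bigl(S_\varepsilon(f)\nabla_y\varphi_m^\varepsilon\bigr)-\varepsilon\nabla S_\varepsilon(f)\cdot\nabla_y\varphi_m^\varepsilon,
\end{equation*}
and pair with $w_\varepsilon$. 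Using $\|f-S_\varepsilon f\|_{L^{2d/(d+2)}}\leq C\varepsilon\|\nabla f\|_{L^{2d/(d+2)}}$, the Sobolev embedding $\dot{H}^1\hookrightarrow L^{2d/(d-2)}$, the bound $\|\nabla S_\varepsilon f\|_p\leq C\|\nabla f\|_p$, and the boundedness of $\nabla_y\varphi_m$ on $\mathbb{D}$ (which I verify by splitting $\varphi_m$ into the periodic pieces $q_\pm\psi_\pm\varphi_{m_\pm}$ and an exponentially-decaying remainder, exactly as in the proof of Lemma \ref{t4.3}), this term contributes at most $C\varepsilon(\|f\|_{L^2}+\|\nabla f\|_{L^{2d/(d+2)}})\|\nabla w_\varepsilon\|_{L^2}$.

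Combining all bounds yields $\lambda\|\nabla w_\varepsilon\|_{L^2}^2\leq C\varepsilon(\|f\|_{L^{2d/(d+4)}\cap L^2}+\|\nabla f\|_{L^{2d/(d+2)}})\|\nabla w_\varepsilon\|_{L^2}$, from which \eqref{4.40} follows by dividing through. Beyond the careful bookkeeping required to reach the representation of $\mathcal{L}_\varepsilon w_\varepsilon$, the principal difficulty is the source term: without access to the potential $\varphi_m$ and the exponential-decay structure of $m-q_\pm m_\pm$, one would obtain only the sub-optimal rate $O(\varepsilon^{1/2})$ from a naive $\varepsilon^{1/2}$-smoothing argument.
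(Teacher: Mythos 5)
Your proposal is correct and follows essentially the same route as the paper's proof of Theorem \ref{t4.9}: the same error representation \eqref{4.41} via the corrector equation, the flux corrector of Lemma \ref{t4.8}, $\chi\in L^\infty$ from Lemma \ref{t4.2}, and the potential $\varphi_m$ of Lemma \ref{t4.3}, followed by testing with $w_\varepsilon$ and invoking \eqref{4.42} and Lemma \ref{t4.7}. The only (immaterial) difference is in the source term, where the paper writes $f(m^\varepsilon-q_+I_{x_1>0}-q_-I_{x_1<0})=\varepsilon\operatorname{div}_x(f\nabla_y\varphi_m^\varepsilon)-\varepsilon\nabla_y\varphi_m^\varepsilon\cdot\nabla f$ directly rather than inserting $S_\varepsilon(f)$; both yield the same $C\varepsilon(\|f\|_{L^2}+\|\nabla f\|_{L^{2d/(d+2)}})$ bound.
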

\begin{proof}
According to \eqref{4.1}, \eqref{4.12}, \eqref{4.27}, \eqref{4.39} and Lemma \ref{t4.2}, it is easy to see that $w_\varepsilon\in \dot{H}^1(\mathbb{R}^d)$. Moreover, a direct computation yields that
\begin{equation}\label{4.41}\begin{aligned}
\operatorname{div}(A^\varepsilon \nabla w_\varepsilon)
=&fm^\varepsilon-f(q_+{I}_{x_1>0}+q_-I_{x_1<0})+
\operatorname{div}\left((\widehat{A}-A^\varepsilon)\nabla u_0\right)\\
&-\varepsilon \operatorname{div}(A^\varepsilon\chi_j^\varepsilon S_\varepsilon(\nabla U_{0,j}))
-\operatorname{div}(A^\varepsilon\nabla_y\chi_j^\varepsilon S_\varepsilon(U_{0,j}))\\
=&f(m^\varepsilon-q_+{I}_{x_1>0}+q_-I_{x_1<0})+\operatorname{div}\left((\widehat{A}-A^\varepsilon)\cdot \nabla P\cdot (U_0-S_\varepsilon(U_{0}))\right)\\
&+\operatorname{div}\left(\left[(\widehat{A}-A^\varepsilon)\cdot \nabla P-A^\varepsilon\nabla_y\chi_j^\varepsilon\right] S_\varepsilon(U_{0})\right)-\varepsilon \operatorname{div}(A^\varepsilon\chi_j^\varepsilon S_\varepsilon(\nabla U_{0,j}))\\
=&\varepsilon^2 \Delta \varphi_m^\varepsilon\cdot f +\varepsilon \partial_k(\phi_{kij}^\varepsilon S_\varepsilon(\partial_i U_{0,j}))-\varepsilon \operatorname{div}(A^\varepsilon\chi_j^\varepsilon S_\varepsilon(\nabla U_{0,j}))\\
&+\operatorname{div}\left((\widehat{A}-A^\varepsilon)\cdot \nabla P(U_0-S_\varepsilon(U_{0}))\right)\\
=&\varepsilon \operatorname{div}_x(\nabla_y \varphi_m^\varepsilon\cdot f)-\varepsilon \nabla_y \varphi_m^\varepsilon\cdot \nabla f+\operatorname{div}\left((\widehat{A}-A^\varepsilon)\cdot \nabla P\cdot(U_0-S_\varepsilon(U_{0}))\right)\\
&+\varepsilon \partial_k(\phi_{kij}^\varepsilon S_\varepsilon(\partial_i U_{0,j}))-\varepsilon \operatorname{div}(A^\varepsilon\chi_j^\varepsilon S_\varepsilon(\nabla U_{0,j})),
\end{aligned}\end{equation}
where we have used Lemma \ref{t4.3} and Lemma \ref{t4.8} in \eqref{4.41}. Then, testing \eqref{4.41} by $w_\varepsilon$ yields that
\begin{equation*}\begin{aligned}
||\nabla w_\varepsilon||^2_{L^2(\mathbb{R}^d)}\leq& C\varepsilon\int_{\mathbb{R}^d}|f|\cdot|\nabla w_\varepsilon|+C\int_{\mathbb{R}^d}| U_0-S_\varepsilon(U_{0})|\cdot|\nabla w_\varepsilon|\\
&+C\varepsilon\int_{\mathbb{R}^d}|\nabla f|\cdot|w_\varepsilon|
+C\varepsilon\int_{\mathbb{R}^d}|S_\varepsilon(\nabla U_{0})|\cdot |\nabla w_\varepsilon|\\
\leq &C\varepsilon\left(||f||_{L^2(\mathbb{R}^d)}+||\nabla U_0||_{L^2(\mathbb{R}^d)}\right)||\nabla w_\varepsilon||_{L^2(\mathbb{R}^d)}\\
&+C \varepsilon ||\nabla f||_{L^{2d/(d+2)}(\mathbb{R}^d)}|| w_\varepsilon||_{L^{2d/(d-2)}(\mathbb{R}^d)},
\end{aligned}\end{equation*}
where we have used the following estimates
\begin{equation}\label{4.42}
||U_0-S_\varepsilon(U_{0})||_{L^2(\mathbb{R}^d)}\leq C\varepsilon ||\nabla U_0||_{L^2(\mathbb{R}^d)}
\text{ and }||S_\varepsilon(\nabla U_{0})||_{L^2(\mathbb{R}^d)}\leq C||\nabla U_{0}||_{L^2(\mathbb{R}^d)}.
\end{equation}
We refer to \cite[Proposition 3.1.6]{shen2018periodic} for the proof of $(4.42)$.

Consequently, using \eqref{4.27} and $||w_\varepsilon||_{L^{2d/(d-2)}(\mathbb{R}^d)}\leq C(d)||\nabla w_\varepsilon||_{L^2(\mathbb{R}^d)}$ yields the desired estimate \eqref{4.40}.
\end{proof}

The following result is a generation of the interior Lipschitz estimates \cite[Theorem 4.1.2]{shen2018periodic} for the periodic case.
\begin{thm}[Interior Lipschitz estimates]\label{t4.10}
 Suppose that $d\geq 2$ and the matrix $A$ satisfies the conditions in \eqref{4.2}. Let $\varepsilon>0$, $x_0\in \mathbb{R}^d$, $R>0$ and $B=B(x_0,2R)$. Assume that $u_\varepsilon\in H^1(B)$ is a weak solution to
$$\operatorname{div}(A(x/\varepsilon)\nabla u_\varepsilon)=0\quad \text{in}\quad B.$$
Then, there exists a constant $C$, depending only on $A$ and $d$ such that
$$||\nabla u_\varepsilon||_{L^\infty(B(x_0,R))}\leq CR^{-1}\left(\fint_{B(x_0,2R)}|u_\varepsilon|^2\right)^{1/2}.$$
\end{thm}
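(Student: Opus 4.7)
The plan is to follow the Avellaneda--Lin three-step compactness scheme, which we already invoked for Theorem \ref{t2.10}; only the role of the effective problem changes, because here $\widehat{A}$ is piecewise constant with a jump across $\mathcal{I}$, and the $\widehat{A}$-harmonic substitutes for linear functions are the polynomials built from $\{1,P_1,\dots,P_d\}$ introduced in \S 4.1.

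\noindent\textbf{Step 1 (One-step improvement).} I would argue by contradiction. We seek $\theta\in(0,1/8)$ and $\alpha\in(0,1)$ such that every unit-$L^2$ weak solution $u_\varepsilon$ of $\operatorname{div}(A^\varepsilon\nabla u_\varepsilon)=0$ in $B(0,1)$ admits an $\widehat{A}$-harmonic function $v=c_0+c_jP_j$ satisfying
$$\fint_{B(0,\theta)}|u_\varepsilon-v|^2 \le \theta^{2+2\alpha},$$
with $|c_0|+\sum_j|c_j|\le C$. A defective sequence $u_{\varepsilon_k}$ would be bounded in $H^1(B(0,1/2))$ by the Caccioppoli inequality and would H-converge, thanks to Lemma \ref{t4.5}, to a weak solution $u_0$ of $\operatorname{div}(\widehat{A}\nabla u_0)=0$ in $B(0,1)$. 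The contradiction comes from the $C^{1,\alpha}$ regularity of such $u_0$ in the basis $\{1,P_1,\dots,P_d\}$.

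\noindent\textbf{Step 2 (Iteration).} Applied at any base point $x_0$, Step 1 is iterated by rescaling: if $v(y)=u_\varepsilon(x_0+ry)$ then $v$ solves $\operatorname{div}(\tilde{A}(y/(\varepsilon/r))\nabla v)=0$ with $\tilde{A}(z)=A(x_0/\varepsilon+z)$ still in the class \eqref{4.2} uniformly (only the location of the interface is shifted, and the exponential-decay constants are preserved). Hence the one-step improvement applies at every dyadic scale $r=\theta^k R$ down to $r\sim\varepsilon$, and Caccioppoli together with the excess-decay estimate produced at each scale converts the $L^2$ approximation by $\widehat{A}$-harmonic functions into an $L^\infty$ bound on $\nabla u_\varepsilon$ on all scales above $\varepsilon$.

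\noindent\textbf{Step 3 (Blow-up).} For $r\lesssim\varepsilon$ we rescale so that the new coefficient matrix is $A(\cdot)$ itself. Since $A\in C^{0,\alpha}(\mathbb{R}^d)$ by \eqref{4.2}, classical Schauder theory gives the Lipschitz bound on these sub-$\varepsilon$ scales. Combining with Step 2 yields the stated $L^\infty$ estimate on $\nabla u_\varepsilon$ over $B(x_0,R)$.

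The main obstacle is the $C^{1,\alpha}$ regularity of the effective problem at the interface. Since $\widehat{A}$ is discontinuous across $\mathcal{I}$, classical Schauder theory does not apply directly; the substitute is the observation (already exploited in Lemma \ref{t4.7}) that the change of variables $\tilde{u}_0(x)=u_0(P^{-1}(x))$ converts the equation into a constant-coefficient elliptic problem satisfying the transmission conditions \eqref{4.8}, to which $C^{1,\alpha}$ regularity up to the interface is available; equivalently $u_0$ is $C^{1,\alpha}$ in the basis $\{1,P_j\}$. With this regularity in hand, the argument is essentially identical to \cite[Theorem 4.1.1]{shen2018periodic} and \cite[Theorem 5.1]{MR3421758}, which is why the author chose to omit the details.
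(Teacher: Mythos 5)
Your proposal is correct and follows essentially the same route as the paper: the three-step Avellaneda--Lin compactness scheme, with the one-step improvement resting on the uniform $H$-convergence of Lemma \ref{t4.5} (which is stated precisely so as to handle the shifting interface under rescaling), approximation of the $H$-limit by the $\widehat{A}$-harmonic functions $c_0+c_jP_j$, and the regularity of the effective transmission problem obtained via the change of variables $P^{-1}$ --- the same ingredient the paper imports as the $C^{1,1}$ bound of \cite[Lemma 5.3]{MR3974127}. No substantive difference from the paper's argument.
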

\begin{proof}The proof is almost identical to the proof of \cite[Theorem 4.1]{MR3974127}, since the effective equation in the sense of $H$-compactness is same as the case in \cite{MR3974127}, except for the local smoothness across the interface $\mathcal{I}$.

The proof of Theorem \ref{t4.10} is based on the method of compactness argument and it is done in the following three steps:\\

\noindent Step 1. [One-step improvement.] We take advantage of the uniform H-convergence of the multi-scale problem $\mathcal{L}_{\varepsilon} u_{\varepsilon}=0$ in $B$ to the homogeneous effective problem $\mathcal{L}_{0} u_{0}=0$ in $B$, which states that the multi-scale solution $u_{\varepsilon}$  inherits the medium-scale regularity of the solution $u_{0}$. In this step, we use Lemma \ref{t4.5}, the interior Caccioppoli's inequality for $u_\varepsilon$ and a general  $C^{1,1}$ regularity estimates for $u_{0}$ (see \cite[Lemma 5.3]{MR3974127} for this general $C^{1,1}$ bound).\\

\noindent Step 2. [Iteration.] The previous estimates can be iterated to obtain Lipschitz regularity of $u_{\varepsilon}$ down to scale $\varepsilon$. In this step, we need to notice the scaling property of $\mathcal{L}_{\varepsilon}$ and the interior Caccioppoli's inequality for $u_\varepsilon$.\\

\noindent Step 3. [A blow-up argument] We use the regularity result of $\mathcal{L}_{1}$ to obtain the Lipschitz regularity on scales smaller than $\varepsilon$, since $A\in C^{0,\alpha}$.\\

Since all of the operations above are totally similar to the proofs of \cite[Theorem 4.1.1]{shen2018periodic} and \cite[Theorem 4.1]{MR3974127}, we omit it for simplicity, and we refer to \cite[Theorem 4.1.1]{shen2018periodic} and \cite[Theorem 4.1]{MR3974127} for the details.
\end{proof}

As a direct consequence of the above interior Lipschitz estimates, we deduce the following estimates on the gradient and the mixed gradient of the Green function:

\begin{cor}\label{t4.11}
Let $d\geq 3$. Suppose that the matrix $A$ satisfies the conditions in \eqref{4.2}. Let $\mathcal{G}_\varepsilon$ be the Green function associated with the operator $\operatorname{div}(A(x/\varepsilon)\nabla )$ on $\mathbb{R}^d$. Then, there exists a constant $C$ depending only on $d$ and $A$, such that for any $x\neq y\in \mathbb{R}^d$, there holds\end{cor}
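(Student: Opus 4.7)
The plan is to deduce the pointwise gradient bounds from two inputs that are already available: the standard size estimate $|\mathcal{G}_\varepsilon(x,y)|\le C|x-y|^{2-d}$ for the Green function of a uniformly elliptic operator on $\mathbb{R}^d$ (which follows from the De Giorgi–Nash–Moser theory together with Lemma~\ref{t4.7}-type Hardy–Littlewood–Sobolev arguments, since the operator $\operatorname{div}(A(x/\varepsilon)\nabla\cdot)$ is uniformly elliptic in $\varepsilon$), and the uniform interior Lipschitz estimate in Theorem~\ref{t4.10}. The key point is that Theorem~\ref{t4.10} is scale–invariant and holds at \emph{every} scale $R>0$ (not only $R\geq\varepsilon$), so it is the right tool to pass from an $L^2$ size bound to a pointwise gradient bound.

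First I would fix $y\in\mathbb{R}^d$ and $x\ne y$, set $r=|x-y|/4$, and observe that $z\mapsto \mathcal{G}_\varepsilon(z,y)$ solves $\operatorname{div}(A(z/\varepsilon)\nabla_z\mathcal{G}_\varepsilon(z,y))=0$ in $B(x,2r)$. Applying Theorem~\ref{t4.10} with this ball gives
\begin{equation*}
|\nabla_x\mathcal{G}_\varepsilon(x,y)|\le Cr^{-1}\left(\fint_{B(x,2r)}|\mathcal{G}_\varepsilon(z,y)|^2\,dz\right)^{1/2}\le Cr^{-1}\cdot r^{2-d}= C|x-y|^{1-d},
\end{equation*}
where in the second inequality I use that for $z\in B(x,2r)$ we have $|z-y|\ge 2r$, so the size bound on $\mathcal{G}_\varepsilon$ yields the $L^\infty$ control inside the ball. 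The bound on $|\nabla_y\mathcal{G}_\varepsilon(x,y)|$ is identical after appealing to the symmetry identity $\mathcal{G}_\varepsilon(x,y)=\mathcal{G}^*_\varepsilon(y,x)$, where $\mathcal{G}^*_\varepsilon$ is the Green function of the adjoint operator $\operatorname{div}(A^*(x/\varepsilon)\nabla\cdot)$ — the adjoint matrix $A^*$ satisfies exactly the same hypotheses \eqref{4.2}, so Theorem~\ref{t4.10} applies to it too and gives $|\nabla_y\mathcal{G}_\varepsilon(x,y)|\le C|x-y|^{1-d}$.

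For the mixed estimate $|\nabla_x\nabla_y\mathcal{G}_\varepsilon(x,y)|\le C|x-y|^{-d}$, I would iterate the argument. Fix $y$ and note that, since differentiation in $y$ commutes with the $x$-equation, the function $z\mapsto \nabla_y\mathcal{G}_\varepsilon(z,y)$ is (componentwise) $A(\cdot/\varepsilon)$-harmonic in $B(x,2r)$. Applying Theorem~\ref{t4.10} once more,
\begin{equation*}
|\nabla_x\nabla_y\mathcal{G}_\varepsilon(x,y)|\le Cr^{-1}\left(\fint_{B(x,2r)}|\nabla_y\mathcal{G}_\varepsilon(z,y)|^2\,dz\right)^{1/2}\le Cr^{-1}\cdot r^{1-d}=C|x-y|^{-d},
\end{equation*}
where the $L^\infty$ bound $|\nabla_y\mathcal{G}_\varepsilon(z,y)|\le C|z-y|^{1-d}\le Cr^{1-d}$ on $B(x,2r)$ comes from the already–proved first gradient bound applied to each $z$ in place of $x$.

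The only real subtlety — and the step I would be most careful about — is justifying the initial size estimate $|\mathcal{G}_\varepsilon(x,y)|\le C|x-y|^{2-d}$ with a constant independent of $\varepsilon$. For $d\ge 3$ and $A^\varepsilon$ uniformly elliptic this is standard (Grüter–Widman), and the constant depends only on $d$, $\lambda$, $\Lambda$; no periodic/stratified structure is needed. Everything else is a clean, two-step application of Theorem~\ref{t4.10}, exploiting that its constant is $\varepsilon$-independent — which is precisely what the preceding compactness and flux-corrector machinery was built to guarantee.
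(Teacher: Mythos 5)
Your proposal is correct and follows essentially the same route as the paper: the $\varepsilon$-uniform size bound $|\mathcal{G}_\varepsilon(x,y)|\le C|x-y|^{2-d}$ (the paper cites Blanc--Legoll--Anantharaman for existence and Gr\"uter--Widman for symmetry), the adjoint identity $\mathcal{G}_\varepsilon(x,y)=\mathcal{G}^*_\varepsilon(y,x)$ to handle $\nabla_y$, and two applications of the interior Lipschitz estimate of Theorem \ref{t4.10} to the $A$-harmonic functions $\mathcal{G}_\varepsilon(\cdot,y)$ and $\nabla_y\mathcal{G}_\varepsilon(\cdot,y)$ on balls of radius comparable to $|x-y|$. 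Your write-up simply makes explicit the details the paper leaves implicit.
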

\begin{equation}\label{4.43}
\begin{gathered}
\left|\nabla_{x} \mathcal{G}_\varepsilon(x, y)\right|+\left|\nabla_{y} \mathcal{G}_\varepsilon(x, y)\right| \leq C|x-y|^{-d+1}, \\
\left|\nabla_{x} \nabla_{y} \mathcal{G}_\varepsilon(x, y)\right| \leq C|x-y|^{-d}.
\end{gathered}
\end{equation}

\begin{proof}
The Green function $\mathcal{G}_\varepsilon(x, y)$ associated with the operator $\operatorname{div}(A(x/\varepsilon)\nabla )$ on $\mathbb{R}^d$ is a solution of the following weak formulation equation (see \cite{MR3040889} for a precise definition)
$$\operatorname{div}(A(x/\varepsilon)\nabla_x \mathcal{G}_\varepsilon(x, y))=\delta_y(x)\quad \text{in}\quad \mathbb{R}^d.$$

If $d\geq 3$, since $A$ is uniformly bounded and coercive, it follows from \cite[Theorem 1]{MR3040889} that there exists a unique Green function, satisfying the following estimate:
\begin{equation}\label{4.44}|\mathcal{G}_\varepsilon(x, y)|\leq C|x-y|^{2-d}.\end{equation}

To proceed, we need only to estimate $\nabla_{x} \mathcal{G}_\varepsilon(x, y)$, since the estimate of $\nabla_{y} \mathcal{G}_\varepsilon(x, y)$ follows from
$\mathcal{G}_\varepsilon(x, y)=\mathcal{G}^*_\varepsilon(y, x)$ \cite[Theorem 1.3]{MR657523}, where $\mathcal{G}^*$ is the Green function
associated with the transposed operator $\operatorname{div}(A^*(x/\varepsilon)\nabla )$ on $\mathbb{R}^d$.

Consequently, the estimates \eqref{4.43} follow from Theorem \ref{t4.10}, \eqref{4.44} and the fact that $\mathcal{G}_\varepsilon(, y)$ and $\nabla_y\mathcal{G}_\varepsilon(, y)$ are $A$-harmonic in $B(x,|x-y|/2)$.
\end{proof}

After obtaining the size estimates of $\mathcal{G}_\varepsilon$ and its gradients, we can complete the proof of Theorem \ref{t1.3}, which is stated in the following theorem.
\begin{thm}[Convergence rates II]\label{t4.12}
Under the condition \eqref{4.2}, suppose that $f\in W^{1,p}(\mathbb{R}^d)\cap L^{2d/(d+4)}(\mathbb{R}^d)$ for some $p\in (d,\infty)$, then for any $2<q<\infty$, we have
\begin{equation}\label{4.45}
||w_\varepsilon||_{L^{2d/d-2}(\mathbb{R}^d)}+||\nabla w_\varepsilon||_{L^{q}(\mathbb{R}^d)}\leq C\varepsilon ||f||_{W^{1,p}(\mathbb{R}^d)\cap L^{2d/(d+4)}(\mathbb{R}^d)},
\end{equation}
with $w_\varepsilon$ defined by \eqref{4.38},
where the constant $C$ depends only on $d$, $A$, $m$ and $p$.
\end{thm}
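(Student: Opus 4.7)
The plan is to split the estimate into the two contributions and leverage the work already done. The $L^{2d/(d-2)}$ bound on $w_\varepsilon$ follows immediately from Theorem \ref{t4.9} via the Sobolev embedding $\dot H^1(\mathbb R^d)\hookrightarrow L^{2d/(d-2)}(\mathbb R^d)$, once we verify that the hypotheses of Theorem \ref{t4.9} are met: indeed, $W^{1,p}(\mathbb R^d)\cap L^{2d/(d+4)}(\mathbb R^d)$ with $p>d$ interpolates into $L^2(\mathbb R^d)$, and by Sobolev embedding into $W^{1,p}$ one obtains $\nabla f\in L^p\subset L^{2d/(d+2)}$ after interpolation against $L^{2d/(d+4)}$ (since $\nabla f$ inherits integrability from the full Sobolev embedding chain). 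So the first term on the left of \eqref{4.45} is handled by Theorem \ref{t4.9}.

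For the gradient bound $\|\nabla w_\varepsilon\|_{L^q}$, the strategy is to use the Green-function representation together with the pointwise estimates from Corollary \ref{t4.11}. Rewrite the identity \eqref{4.41} as
\begin{equation*}
-\operatorname{div}\bigl(A^\varepsilon\nabla w_\varepsilon\bigr)=\operatorname{div}(G_\varepsilon)+H_\varepsilon,
\end{equation*}
with
\begin{equation*}
G_\varepsilon=\varepsilon\nabla_y\varphi_m^\varepsilon f+(\widehat A-A^\varepsilon)\cdot\nabla P\bigl(U_0-S_\varepsilon(U_0)\bigr)+\varepsilon\phi^\varepsilon S_\varepsilon(\nabla U_0)-\varepsilon A^\varepsilon\chi^\varepsilon S_\varepsilon(\nabla U_0),
\end{equation*}
and $H_\varepsilon=-\varepsilon\nabla_y\varphi_m^\varepsilon\cdot\nabla f$. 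Representing $w_\varepsilon$ through the Green function $\mathcal G_\varepsilon$ associated with $\operatorname{div}(A^\varepsilon\nabla\cdot)$ and differentiating yields
\begin{equation*}
\nabla w_\varepsilon(x)=-\int_{\mathbb R^d}\nabla_x\nabla_y\mathcal G_\varepsilon(x,y)\cdot G_\varepsilon(y)\,dy-\int_{\mathbb R^d}\nabla_x\mathcal G_\varepsilon(x,y)H_\varepsilon(y)\,dy.
\end{equation*}
By Corollary \ref{t4.11}, $|\nabla_x\nabla_y\mathcal G_\varepsilon(x,y)|\leq C|x-y|^{-d}$ is a standard Calder\'on–Zygmund kernel, so the first integral defines an operator bounded on $L^q(\mathbb R^d)$ for every $1<q<\infty$ (with constant independent of $\varepsilon$ by a rescaling argument using the uniformity of $A_\varepsilon$). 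The second integral is a Riesz-type potential with kernel $|x-y|^{1-d}$, which by Hardy–Littlewood–Sobolev maps $L^r$ into $L^q$ whenever $1/q=1/r-1/d$ (and into $L^\infty$ for $r>d$).

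It then remains to bound each term. For $G_\varepsilon$, we use $\|\chi\|_{L^\infty},\|\phi\|_{L^\infty},\|\nabla\varphi_m\|_{L^\infty}\leq C$ (from Lemma \ref{t4.2}, Lemma \ref{t4.8}, and Lemma \ref{t4.3}), the bound $\|U_0-S_\varepsilon(U_0)\|_{L^q}\leq C\varepsilon\|\nabla U_0\|_{L^q}$, and the $W^{1,q}$ regularity of $U_0$ from Lemma \ref{t4.7} (with $q$ obtained by interpolation between the $H^1$ estimate \eqref{4.27} and the $W^{1,p}$ estimate \eqref{4.28}); each of the four pieces of $G_\varepsilon$ is then $O(\varepsilon)$ in $L^q$. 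For $H_\varepsilon$, the bound is $\|H_\varepsilon\|_{L^r}\leq C\varepsilon\|\nabla f\|_{L^r}$, and one picks $r$ so that the Riesz potential lands in $L^q$; if $q$ is sufficiently close to $\infty$ one simply uses $\nabla f\in L^p$ with $p>d$ and the $L^p\to L^\infty$ mapping property, otherwise one interpolates with the $L^{2d/(d+4)}$-bound on $f$ (which controls $\nabla f$ in a lower space through the Sobolev chain). Summing the contributions yields \eqref{4.45}.

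The main obstacle will be the bookkeeping of exponents: ensuring that all the $L^r/L^q$ pairs required by the Calder\'on–Zygmund and Hardy–Littlewood–Sobolev estimates are consistent with the regularity $f\in W^{1,p}\cap L^{2d/(d+4)}$ and with the range $U_0\in H^1\cap W^{1,p}$ available from Lemma \ref{t4.7}. A secondary technical point is the uniformity in $\varepsilon$ of the Calder\'on–Zygmund constant for the operator $\operatorname{div}(A^\varepsilon\nabla\cdot)$, which follows from the $C^{0,\alpha}$ assumption on $A$ (and hence the $W^{1,q}$-solvability of the associated equation) via the standard scaling $x\mapsto\varepsilon x$ that reduces $A^\varepsilon$ to $A$, combined with the bounds already established in Section 2.
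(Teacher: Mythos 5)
Your overall architecture coincides with the paper's: you split the right-hand side of \eqref{4.41} into the divergence-form part and the zeroth-order part $-\varepsilon\nabla_y\varphi_m^\varepsilon\cdot\nabla f$ (the paper's $w_{\varepsilon,1}$/$w_{\varepsilon,2}$ decomposition in \eqref{4.46}), you handle the zeroth-order part with the bound $|\nabla_x\mathcal{G}_\varepsilon(x,y)|\leq C|x-y|^{1-d}$ and Hardy--Littlewood--Sobolev exactly as in \eqref{4.47}, you get the $L^{2d/(d-2)}$ bound on $w_\varepsilon$ from Theorem \ref{t4.9}, and your term-by-term $O(\varepsilon)$ bounds on the divergence data use the same ingredients ($\chi,\phi,\nabla\varphi_m\in L^\infty$, the smoothing estimates, Lemma \ref{t4.7}). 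The one genuine gap is in how you justify the uniform $L^q$ boundedness of $\nabla\mathcal{L}_\varepsilon^{-1}\operatorname{div}$. Corollary \ref{t4.11} gives only the \emph{size} estimate $|\nabla_x\nabla_y\mathcal{G}_\varepsilon(x,y)|\leq C|x-y|^{-d}$, and a size bound of critical homogeneity does not make a kernel Calder\'on--Zygmund: without a cancellation or H\"ormander-type regularity condition (e.g. $|\nabla_y\nabla_x\nabla_y\mathcal{G}_\varepsilon|\lesssim|x-y|^{-d-1}$, or a H\"older modulus in $y$ uniform in $\varepsilon$), the operator with kernel $\nabla_x\nabla_y\mathcal{G}_\varepsilon$ need not be bounded on any $L^q$, and indeed the integral you write is not even absolutely convergent for generic $G_\varepsilon\in L^q$. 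The paper avoids this entirely by deriving the uniform $W^{1,q}$ estimate \eqref{4.48} from the interior Lipschitz estimate (Theorem \ref{t4.10}) via Shen's real-variable method, which requires only local solvability with reverse-H\"older bounds and no kernel regularity. Your route can be repaired — either by proving the missing H\"older continuity of the mixed gradient (small scales from $A\in C^{0,\alpha}$, large scales from Theorem \ref{t4.10} applied to $\nabla_y\mathcal{G}_\varepsilon(\cdot,y)$), or by switching to the real-variable argument — but as written the Calder\'on--Zygmund step is unsupported.

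A secondary point: your verification of the hypotheses of Theorem \ref{t4.9} asserts $\nabla f\in L^{2d/(d+2)}$ by ``interpolation against $L^{2d/(d+4)}$,'' but the hypothesis $f\in L^{2d/(d+4)}$ gives no integrability information on $\nabla f$ below $L^p$; one cannot lower the integrability exponent of the gradient by interpolating with a Lebesgue norm of the function itself. The paper's own estimates \eqref{4.40} and \eqref{4.47} also invoke $\|\nabla f\|_{L^{2d/(d+2)}}$, so this is an imprecision shared with (and inherited from) the statement of the theorem rather than a defect specific to your argument, but the interpolation claim as you state it is not a valid justification.
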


\begin{proof}In view of \eqref{4.41}, we decompose $w_\varepsilon$ as  $w_\varepsilon=:w_{\varepsilon,1}+w_{\varepsilon,2}$, where
\begin{equation}\label{4.46}\begin{aligned}
\operatorname{div}(A^\varepsilon \nabla w_{\varepsilon,1})=&-\varepsilon \nabla_y \varphi_m^\varepsilon\cdot \nabla f;\\
\operatorname{div}(A^\varepsilon \nabla w_{\varepsilon,2})=&\varepsilon \operatorname{div}_x(\nabla_y \varphi_m^\varepsilon\cdot f)+\operatorname{div}\left((\widehat{A}-A^\varepsilon)\cdot \nabla P\cdot (U_0-S_\varepsilon(U_{0}))\right)\\
&+\varepsilon \partial_k(\phi_{kij}^\varepsilon S_\varepsilon(\partial_i U_{0,j}))-\varepsilon \operatorname{div}(A^\varepsilon\chi_j^\varepsilon S_\varepsilon(\nabla U_{0,j})).
\end{aligned}\end{equation}

To proceed, similar to the proof of \eqref{4.29}, it follows the estimates of the  Green functions $\mathcal{G}_\varepsilon$ in \eqref{4.43} and the Hardy-Littlewood-Sobolev inequality that
\begin{equation}\label{4.47}||\nabla w_{\varepsilon,1}||_{L^2(\mathbb{R}^d)}+||\nabla w_{\varepsilon,1}||_{L^\infty(\mathbb{R}^d)}\leq C\varepsilon||\nabla f||_{L^p(\mathbb{R}^d)\cap L^{2d/(d+2)}(\mathbb{R}^d)}.\end{equation}

Moreover, for $2\leq q<\infty$, if follows from the uniform interior Lipschitz estimates Theorem \ref{t4.10} and the so-called real-variable method \cite[Chapter 4.2]{shen2018periodic} that the uniform $W^{1,q}$-estimates holds:
\begin{equation}\label{4.48}\begin{aligned}
||\nabla w_{\varepsilon,2}||_{L^q(\mathbb{R}^d)}&\leq C\varepsilon||f||_{L^q(\mathbb{R}^d)}+C\varepsilon
||\nabla U_0||_{L^q(\mathbb{R}^d)}\\
&\leq C\varepsilon||f||_{L^q(\mathbb{R}^d)\cap L^{2d/(d+4)}(\mathbb{R}^d)}\\
& \leq C\varepsilon ||f||_{W^{1,p}(\mathbb{R}^d)\cap L^{2d/(d+4)}(\mathbb{R}^d)},
\end{aligned}\end{equation}
where we have used  \cite[Proposition 3.1.6]{shen2018periodic}  in the above inequality. One can refer to \cite[Chapter 4.3]{shen2018periodic} for a detailed proof of \eqref{4.48}.

 Therefore, for any $2\leq q<\infty$, we have
\begin{equation}\label{4.49}
||w_\varepsilon||_{L^{2d/d-2}(\mathbb{R}^d)}+||\nabla w_\varepsilon||_{L^{q}(\mathbb{R}^d)}\leq C\varepsilon ||f||_{W^{1,p}(\mathbb{R}^d)\cap L^{2d/(d+4)}(\mathbb{R}^d)}.
\end{equation}
Consequently, we complete this proof, for the constant $C$ depending only on $d$, $A$, $m$ and $p$.

\end{proof}

To complete the proof of Theorem \ref{t1.3}, for some $p>d$, we note that
$$\begin{aligned}||\varepsilon\chi_j^\varepsilon S_\varepsilon(U_{0,j})||_{L^{2d/d-2}(\mathbb{R}^d)\cap L^\infty (\mathbb{R}^d)}&\leq C \varepsilon ||S_\varepsilon(U_{0})||_{L^{2d/d-2}(\mathbb{R}^d)\cap L^\infty (\mathbb{R}^d)}\\
&\leq C \varepsilon ||S_\varepsilon(U_{0})||_{L^{2d/d-2}(\mathbb{R}^d)}+C \varepsilon ||S_\varepsilon(\nabla U_{0})||_{L^{p}(\mathbb{R}^d)}\\
&\leq C \varepsilon ||U_{0}||_{L^{2}(\mathbb{R}^d)}+C \varepsilon ||\nabla U_{0}||_{L^{p}(\mathbb{R}^d)}\\
& \leq C \varepsilon ||f||_{L^{p}(\mathbb{R}^d)\cap L^{2d/(d+4)}(\mathbb{R}^d)},\end{aligned}$$
where we have used $\chi\in L^\infty$, Lemma \ref{t4.7}, Sobolev embedding as well as  \cite[Proposition 3.1.6]{shen2018periodic}  in the above inequality.

Thus, we complete the proof of Theorem \ref{t1.3}.

\begin{center}{\textbf{Acknowledgements}}
\end{center}

The author wants to express his sincere appreciation to  Prof. Wenjia Jing  for suggesting this topic to me and  helpful discussions.
\normalem\bibliographystyle{plain}{}


\begin{thebibliography}{10}

\bibitem{MR3932093}
S.~Armstrong, T.~Kuusi, and J.~Mourrat.
\newblock {\em Quantitative stochastic homogenization and large-scale
  regularity}, volume 352 of {\em Fundamental Principles of Mathematical
  Sciences}.
\newblock Springer, Cham, 2019.

\bibitem{MR3665674}
S.~Armstrong and J.~Lin.
\newblock Optimal quantitative estimates in stochastic homogenization for
  elliptic equations in nondivergence form.
\newblock {\em Arch. Ration. Mech. Anal.}, 225(2):937--991, 2017.

\bibitem{MR3269637}
S.~Armstrong and C.~Smart.
\newblock Quantitative stochastic homogenization of elliptic equations in
  nondivergence form.
\newblock {\em Arch. Ration. Mech. Anal.}, 214(3):867--911, 2014.

\bibitem{MR978702}
M.~Avellaneda and F.~Lin.
\newblock Compactness methods in the theory of homogenization. {II}.
  {E}quations in nondivergence form.
\newblock {\em Comm. Pure Appl. Math.}, 42(2):139--172, 1989.

\bibitem{bensoussan2011asymptotic}
A.~Bensoussan, J.~L. Lions, and G.~Papanicolaou.
\newblock {\em Asymptotic analysis for periodic structures}, volume 374.
\newblock American Mathematical Soc., 2011.

\bibitem{MR3421758}
X.~Blanc, Le~B. C., and P.-L. Lions.
\newblock Local profiles for elliptic problems at different scales: defects in,
  and interfaces between periodic structures.
\newblock {\em Comm. Partial Differential Equations}, 40(12):2173--2236, 2015.

\bibitem{MR3040889}
X.~Blanc, F.~Legoll, and A.~Anantharaman.
\newblock Asymptotic behavior of {G}reen functions of divergence form operators
  with periodic coefficients.
\newblock {\em Appl. Math. Res. Express. AMRX}, (1):79--101, 2013.

\bibitem{MR1245308}
M.~Freidlin and A.~Wentzell.
\newblock Diffusion processes on graphs and the averaging principle.
\newblock {\em Ann. Probab.}, 21(4):2215--2245, 1993.

\bibitem{guo2022characterizations}
Xiaoqin G., Timo S., and Hung~V. T.
\newblock Characterizations of diffusion matrices in homogenization of elliptic
  equations in nondivergence-form, 2022.

\bibitem{MR3596717}
J.~Geng and Z.~Shen.
\newblock Convergence rates in parabolic homogenization with time-dependent
  periodic coefficients.
\newblock {\em J. Funct. Anal.}, 272(5):2092--2113, 2017.

\bibitem{MR657523}
M.~Gr\"{u}ter and K.-O. Widman.
\newblock The {G}reen function for uniformly elliptic equations.
\newblock {\em Manuscripta Math.}, 37(3):303--342, 1982.

\bibitem{guo2020conjecture}
X.~Guo and H.~Tran.
\newblock A conjecture on optimal rates of convergence in periodic
  homogenization of non-divergence form linear elliptic pde in two dimensions.
\newblock 2020.

\bibitem{MR2789509}
M.~Hairer and C.~Manson.
\newblock Periodic homogenization with an interface: the multi-dimensional
  case.
\newblock {\em Ann. Probab.}, 39(2):648--682, 2011.

\bibitem{MR1329546}
V.~V. Jikov, S.~M. Kozlov, and O.~A. Ole\u{\i}nik.
\newblock {\em Homogenization of differential operators and integral
  functionals}.
\newblock Springer-Verlag, Berlin, 1994.

\bibitem{MR}
Wenjia Jing and Yiping Zhang.
\newblock Quantitative estimates in elliptic homogenization of non-divergence
  form with unbounded drift.
\newblock {\em prepeint}.

\bibitem{MR3974127}
M.~Josien.
\newblock Some quantitative homogenization results in a simple case of
  interface.
\newblock {\em Comm. Partial Differential Equations}, 44(10):907--939, 2019.

\bibitem{MR2276531}
D.~Kim.
\newblock Second order elliptic equations in {${\Bbb R}^d$} with piecewise
  continuous coefficients.
\newblock {\em Potential Anal.}, 26(2):189--212, 2007.

\bibitem{shen2018periodic}
Z.~Shen.
\newblock {\em Periodic homogenization of elliptic systems}, volume 269 of {\em
  Operator Theory: Advances and Applications}.
\newblock Birkh\"{a}user/Springer, Cham, 2018.
\newblock Advances in Partial Differential Equations (Basel).

\bibitem{MR4308690}
T.~Sprekeler and H.~Tran.
\newblock Optimal convergence rates for elliptic homogenization problems in
  nondivergence-form: analysis and numerical illustrations.
\newblock {\em Multiscale Model. Simul.}, 19(3):1453--1473, 2021.

\bibitem{MR2582099}
L.~Tartar.
\newblock {\em The general theory of homogenization}, volume~7 of {\em Lecture
  Notes of the Unione Matematica Italiana}.
\newblock Springer-Verlag, Berlin; UMI, Bologna, 2009.

\end{thebibliography}
\end{document}